\begin{document}
\newtheorem{theorem}{Theorem}[section]
\newtheorem{theoremast}[theorem]{{}*Theorem}
\newtheorem{lemma}[theorem]{Lemma}
\newtheorem{remark}[theorem]{Remark}
\newtheorem{definition}[theorem]{Definition}
\newtheorem{lemdef}[theorem]{Lemma-Definition}

\newtheorem{lemmadef}[theorem]{Lemma-definition}
\newtheorem{proposition}[theorem]{Proposition}
\newtheorem{corollary}[theorem]{Corollary}

\theoremstyle{definition}
\newtheorem{example}[theorem]{Example}
\newtheorem{examples}[theorem]{Examples}
\newtheorem{none}[theorem]{}
\theoremstyle{remark}
\newtheorem{caution}[theorem]{Caution}
\newtheorem{remarks}[theorem]{Remarks}
\newtheorem{question}[theorem]{Question}
\newtheorem{problem}{Problem}

\newtheorem{exercise}{Exercise}

\renewcommand{\bar}{\overline}
\def\C{\mathbb{C}}
\def\R{\mathbb{R}}

\def\T{\mathrm{T}}
\def\X{\mathcal{X}}
\def\U{\mathcal{U}}
\def\P{\Phi}
\def\M{\mathcal{M}}
\def\Z{\mathcal{Z}_{m}}
\def\ZZ{\mathcal{Z}_{m}^{H}}
\def\d{\partial}
\def\cP{\mathscr P}
\def\bB{\mathbb B}
\def\TT{\mathcal{T}_{m}^{H}}
\def\GD{\Gamma \backslash D}
\def\mf{\mathfrak}
\def\ad{\mathrm{ad}\,}
\def\Ad{\mathrm{Ad}\,}
\def\mI{\mathscr{I}}
\def\mU{\mathscr{U}}
\def\c{\mathrm{c}}
\def\nc{\mathrm{nc}}
\def\nca{{\mathrm{nc},1}}
\def\ncb{{\mathrm{nc},2}}
\def\i{\sqrt{-1}}

\title{Penrose transformation on flag domains}
\author{Kefeng Liu}
\address{Mathematical Sciences Research Center, Chongqing University of Technology, Chongqing 400054, China; \newline
Shanghai Institute for Mathematics and Interdisciplinary Sciences, Shanghai 200433, China}
\email{liu@math.ucla.edu}

\author{Yang Shen}
\address{Mathematical Sciences Research Center, Chongqing University of Technology, Chongqing 400054, China}
\email{syliuguang2007@163.com}

\begin{abstract}
%
Building on our recent work, we construct the Penrose transformations of the cohomology groups of homogeneous line bundles on flag domains $D = G_\R / T$, where $G_\R$ is of Hermitian type. 
We provide sufficient conditions for the injectivity of the Penrose transformation and identify conditions under which the Penrose transformation of the automorphic cohomology groups on compact quotients of flag domains is an isomorphism.
Finally, we prove that the higher automorphic cohomology groups of certain homogeneous line bundles are isomorphic to the groups of automorphic forms on the Hermitian symmetric domain, and we apply this result to the cup products of the automorphic cohomology groups.
\end{abstract}

\maketitle

\tableofcontents






\setcounter{section}{-1}
\section{Introduction}
As noted in the preface of \cite{BE}, the Penrose transformation was originally introduced by Roger Penrose in the 1960s as an isomorphism between a sheaf cohomology group on a region of projective space and the solutions of a zero-rest-mass field equation on a region of spacetime in mathematical physics. Since its introduction, numerous generalizations of the Penrose transformation have been studied in both mathematics and physics. See, for example \cite{BE}, for a detailed discussion on Penrose transformations on flag varieties.

In recent works \cite{C1}, \cite{C2}, and \cite{C3} by Carayol, as well as in \cite{GGK} and \cite{GGK14} by Green, Griffiths, and Kerr, the Penrose transformation has been studied on flag domains for $SU(2,1)$ and $Sp(4)$. In the context of Hodge theory and representation theory, the Penrose transformation on flag domains investigates the relationship between the cohomology groups of homogeneous line bundles on a classical flag domain $D'$ and those on the non-classical flag domain $D$  that is diffeomorphic to $D'$. Additionally, it also studies the relations between the corresponding automorphic cohomology groups on the compact quotients $X$ and $X'$ of $D$ and $D'$ respectively.

As mentioned on Page 223 and Page 233 of \cite{GGK}, little is known about the Penrose transformation on flag domains, except for some special cases.

In our recent work \cite{LiuShen24}, we proved that any non-classical flag domain $D = G_\R / V$ with $G_\R$ of Hermitian type is diffeomorphic to a classical flag domain $D'$, where $G_\R$ is a semisimple real Lie group and $V \subset G_\R$ is a compact subgroup containing a compact Cartan subgroup $T$. This represents the most general result on this topic, since any real Lie group $G_\R$ acting on a classical flag domain must be of Hermitian type. Furthermore, in \cite{LiuShen24}, we also showed that the compact, smooth quotient $X$ of $D$ admits no $\partial \bar{\partial}$-structure compatible with the complex structure induced from $D$. This result is obtained by proving a conjecture of Green--Griffiths--Kerr concerning the vanishing of the zeroth automorphic cohomology groups of any non-trivial homogeneous vector bundles on $X$.

In contrast, it was proved in \cite{GS} that the compact quotient $X'$ of the classical flag domain $D'$ is a projective manifold, and thus, it has an arithmetic structure on its automorphic cohomology group. Once the Penrose transformation is shown to be an isomorphism, the automorphic cohomology group on $X$ will inherit the arithmetic structure from that on $X'$. 

Therefore, in conjunction with our work \cite{LiuShen24}, the Penrose transformation on flag domains reveals unexpected useful properties. For further important applications of the Penrose transformation, we refer the reader to  Lecture 9 in \cite{GGK} and \cite{GGK14}.

In this paper, we develop the Penrose transformation on general non-classical flag domains of the form $D = G_\R / T$ with $G_\R$ of Hermitian type, within the framework as developed by Green, Griffiths and Kerr in \cite{GGK} and \cite{GGK14}.
Our proof is based on overcoming several technical difficulties by combining methods from deformation theory, geometric realizations of the representations of compact and non-compact real Lie groups, and geometry of homogenous manifolds.

Now let us introduce the main results of this paper.

Let $D=G_{\R}/T$ be a non-classical flag domain with $G_{\R}$ of Hermitian type, and $D'$ be the classical one which is diffeomorphic to $D$.
The complex structures of $D$ and $D'$ are given respectively by
\begin{eqnarray}
  \mathrm{T}^{1,0}_o D &\cong& \mf n_-=\mf k_-\oplus \mf p_-^1\oplus \mf p_-^2 \label{intr complex D}; \\
  \mathrm{T}^{1,0}_o D' &\cong& \mf n'_-=\mf k_-\oplus \mf p_+^1\oplus \mf p_-^2.\label{intr complex D'}
\end{eqnarray}
They only differ in the directions $\mf p_\pm ^1\subset \mf p$, where $\mf p_{+}^{1}=\bar{\mf p_{-}^{1}}$. Please see Section \ref{Pre} for the notations of the Lie algebras and the description of the complex structures of flag domains in terms of Lie algebras.

Let $\Delta$ be the root system for the complex Lie algebra $\mf g$ with respect to the Cartan subalgebra $\mf h$, where $\mf g$ and $\mf h$ are the complexifications of the Lie algebras $\mf g_{0}$ and $\mf h_{0}$ of $G_{\R}$ and $T$ respectively. Then we have the root space decomposition
$$\mf g=\mf h \oplus \sum_{\alpha\in \Delta}\mf g_{\alpha}.$$
Let $\Delta_{+}$ and $\Delta'_{+}$ be the set of positive roots such that 
$$\mf g_{\alpha}\subset \mf n_{-},\, \mf g_{\alpha'}\subset \mf n'_{-},\, \forall\, \alpha\in \Delta_{+},\, \alpha'\in \Delta'_{+}.$$
The decomposition \eqref{intr complex D} induces the union
$$\Delta_{+}=\Delta_{+}^{\c}\cup \Delta_{+}^{\nc}=\Delta_{+}^{\c}\cup \Delta_{+}^{\nca}\cup\Delta_{+}^{\ncb}$$
such that the corresponding space $\mf g_{\alpha}$ lies in $\mf k_-, \mf p_-^1, \mf p_-^2$ respectively. Then the decomposition \eqref{intr complex D'} implies that
$$\Delta'_{+}={\Delta'}_{+}^{\c}\cup {\Delta'}_{+}^{\nc}=\Delta_{+}^{\c}\cup \left(-\Delta_{+}^{\nca}\right)\cup\Delta_{+}^{\ncb}.$$
In this paper, $\rho_{\#}$ ($\rho'_{\#}$ resp.) denotes the half of the sum of the roots in $\Delta_{+}^{\#}$ (${\Delta'}_{+}^{\#}$ resp.), where $``\#''$ can be taken to be  one of the symbols $``\emptyset'',``\c'',``\nc'',``\nca'',``\ncb''$.

From Lecture 7 in \cite{GGK}, we know that there is an associated space $\mathscr W$ which is called the correspondence space for $D$ and $D'$. Moreover $\mathscr W$ is Stein with the basic diagram
\begin{equation}\label{intr basic diagram}
\xymatrix{
&\mathscr W\ar[ld]^-{\pi}\ar[rd]_-{\pi'}&\\
D & & D'
}\end{equation}
where the fibers of $\pi$ and $\pi'$ are contractible. 
Then, one can apply the EGW Theorem in \cite{EGW} to the homogeneous line bundles $L_\mu$ and $L_{\mu'}$ on $D$ and $D'$ respectively to obtain
\begin{eqnarray}
  H^*(D,L_{\mu}) &\cong& H^*_{DR}(\mathscr W,\Omega_{\pi}^\bullet(L_{\mu})),\label{intr 1} \\
  H^*(D',L_{\mu'}) &\cong& H^*_{DR}(\mathscr W,\Omega_{\pi'}^\bullet(L_{\mu'})),\label{intr 2}
\end{eqnarray}
where $$\Omega^\bullet_{\pi}(L_\mu)=\Omega^\bullet_{\mathscr W}/\pi^*\Omega^\bullet_D \otimes_{\mathcal{O}_\mathscr{W}} \pi^*L_\mu$$ with the induced differential $d_\pi$, and the de Rham cohomology groups $$H^*_{DR}(\mathscr W,\Omega_{\pi}^\bullet(L_{\mu}))$$ are the cohomology groups of the sequence of the global sections $$\Gamma(\mathscr W,\Omega_{\pi}^\bullet(L_{\mu});\,d_\pi).$$ The notations for $\pi'$ are similar. 

From \eqref{intr 1} and \eqref{intr 2}, the cohomology groups $H^*(D,L_{\mu})$ and $H^*(D',L_{\mu'})$ can be related on $\mathscr W$. More precisely we have the following definition of Penrose transformation on flag domains.


\begin{lemdef}\label{intr Pen defn}
Let $\mu$ and $\mu'$ be two weights such that $\mu+\rho=\mu'+\rho'$.
Then the Penrose transformation $$\mathscr P:\, H^0(D',L_{\mu'})\to H^q(D,L_{\mu})$$ is defined by
\begin{equation}\label{intr PenD}
  \xymatrix{
  H^0(D',L_{\mu'}) \ar[d]^-{\cong}\ar[rr]^-{\mathscr P}&& H^q(D,L_{\mu})\ar[d]^-{\cong}\\
  H^0_{DR}(\mathscr W,\Omega_{\pi'}^\bullet(L_{\mu'}))\ar[rr]^-{\omega^\nca}&&H^q_{DR}(\mathscr W,\Omega_{\pi}^\bullet(L_{\mu}))
  }
\end{equation}
where $q=\dim_\C \mf p_+^1=\#\Delta_{+}^{\nca}$ with $\Delta_{+}^{\nca}=\{\beta_{1},\cdots,\beta_{q}\}$ and the map $\omega^\nca$ denotes multiplication by $$\omega^\nca=\omega^{-\beta_1}\wedge \cdots \wedge \omega^{-\beta_q}$$ which is considered as a left-invariant $d_\pi$-closed differential $q$-form with values in $L_{-2\rho_\nca}$ on $\mathscr W$.
\end{lemdef}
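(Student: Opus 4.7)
The plan is to verify that $\omega^\nca$ is a well-defined left-invariant $d_\pi$-closed $q$-form on $\mathscr W$ valued in $L_{-2\rho_\nca}$, and that wedge multiplication by it descends to the advertised map on cohomology.

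First I would take care of the weight bookkeeping. From ${\Delta'}_+^\c = \Delta_+^\c$, ${\Delta'}_+^\nca = -\Delta_+^\nca$, and ${\Delta'}_+^\ncb = \Delta_+^\ncb$ one reads off $\rho - \rho' = 2\rho_\nca$, so the hypothesis $\mu + \rho = \mu' + \rho'$ is precisely $\mu' = \mu + 2\rho_\nca$; consequently $L_{\mu'} \otimes L_{-2\rho_\nca} \cong L_\mu$, matching the target line bundle of the lower-right corner of the diagram. To construct the forms themselves, for each $\beta \in \Delta_+^\nca$ I pick a root vector $e_{-\beta} \in \mf g_{-\beta} \subset \mf p_+^1$ and let $\omega^{-\beta}$ denote the corresponding component of the Maurer-Cartan form of $G_\C$, viewed on $\mathscr W$. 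This is left $G_\R$-invariant by construction and transforms with weight $-\beta$ under right $T$-translation, so it descends to a section of $\Omega^1_{\mathscr W} \otimes L_{-\beta}$; wedging the $q$ such forms gives $\omega^\nca \in \Gamma(\mathscr W, \Omega^q_{\mathscr W} \otimes L_{-2\rho_\nca})$.

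Next I would establish $d_\pi$-closedness. The Maurer-Cartan equation writes $d\omega^{-\beta}$ as a bilinear combination of $\omega^\gamma \wedge \omega^\delta$ with $\gamma + \delta = -\beta$. Because $\mf p_+ = \mf p_+^1 \oplus \mf p_+^2$ is abelian, no pair with both indices in $\Delta_+^\nca \cup \Delta_+^\ncb$ can contribute. Any surviving pair either duplicates a factor already present in $\omega^{-\beta_1} \wedge \cdots \wedge \omega^{-\beta_q}$, vanishing by antisymmetry, or produces a contribution in $\pi^*\Omega^\bullet_D$, which is killed by the definition of the relative complex $\Omega^\bullet_\pi$. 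This yields $d_\pi \omega^\nca = 0$.

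The main obstacle will be showing that wedging with $\omega^\nca$ genuinely descends to a well-defined map on cohomology. The subtlety is that $d_\pi$ and $d_{\pi'}$ are defined modulo \emph{different} subsheaves of pullback forms, reflecting the swapped roles of $\mf p_+^1$ and $\mf p_-^1$ in the two complex structures; the forms $\omega^{-\beta_i}$ for $\beta_i \in \Delta_+^\nca$ are precisely the generators of this discrepancy. To handle this, I would express any $d_{\pi'}$-closed lift $s$ of a class in $H^0(D', L_{\mu'})$ in the Maurer-Cartan basis on $\mathscr W$ and use $\mf p_+^1 \subset T^{1,0}_o D'$ to rule out $\omega^{-\beta_i}$ components of $ds$ modulo ${\pi'}^*\Omega^\bullet_{D'}$. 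Then I would compute $d_\pi(s \wedge \omega^\nca) = (d_\pi s)\wedge \omega^\nca \pm s \wedge d_\pi \omega^\nca$ and check that both summands vanish in $\Omega^{q+1}_\pi(L_\mu)$: the first because the $\omega^{-\beta_i}$-free part of $d_\pi s$ wedges trivially with the full $\omega^\nca$ modulo $\pi^*\Omega^\bullet_D$, and the second by the previous step. Independence of the chosen representative $s$ then follows from the same computation applied to a $d_{\pi'}$-exact correction.
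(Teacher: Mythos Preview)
Your overall strategy matches the paper's: verify $\mu = \mu' - 2\rho_\nca$, show $\omega^\nca$ is $d_\pi$-closed via the Maurer--Cartan equations, then check $d_\pi(F_\sigma\,\omega^\nca)=0$ for a $d_{\pi'}$-closed section $F_\sigma$ by isolating how $d_\pi$ and $d_{\pi'}$ differ in the $\mf p_\pm^1$ directions.

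There is, however, a genuine error in your $d_\pi$-closedness argument. The claim that $\mf p_+ = \mf p_+^1 \oplus \mf p_+^2$ is abelian is \emph{false}: only the individual pieces are abelian (since $\mf p_+^1 \subset \mf p'_-$ and $\mf p_+^2 \subset \mf p'_+$, and $\mf p'_\pm$ are abelian in the Hermitian case), while $[\mf p_+^1, \mf p_+^2] \subset \mf k_+$ is generally nonzero. Your conclusion that a mixed noncompact pair $(\beta_{j'}, \gamma_k)$ does not contribute to $d_\pi\omega^{-\beta_j}$ is still correct, but for a different reason: the bracket lands in $\mf k_+$ and so has no $\mf g_{-\beta_j}$-component. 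Your dichotomy ``duplicates a factor or lies in $\pi^*\Omega^\bullet_D$'' also omits the pairs $(\alpha_i,\alpha_{i'})$ and $(\alpha_i,\gamma_k)$ involving compact roots; these vanish because $[\mf k_+,\mf k_+]\subset\mf k_+$ and $[\mf k_+,\mf p_+^2]\subset\mf p_+^2$, neither meeting $\mf p_+^1$. The paper handles all of this by tabulating which structure constants $C_{\delta\delta'}^{\beta_j}$ vanish (its Lemma~\ref{structure constant}, derived from the bracket relations of Lemma~\ref{relations of the Lie brackets}), obtaining
\[
d_\pi\omega^{-\beta_j} \equiv_\pi -\sum_{i,j'} C_{\alpha_i\beta_{j'}}^{\beta_j}\,\omega^{-\alpha_i}\wedge\omega^{-\beta_{j'}},
\]
after which wedging into the full product kills everything by duplication of $\omega^{-\beta_{j'}}$, exactly as you say. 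So the gap is repairable: replace the abelian claim with the finer relations $[\mf p_+^i,\mf p_+^i]=0$, $[\mf p_+^1,\mf p_+^2]\subset\mf k_+$, $[\mf k_+,\mf k_+]\subset\mf k_+$, $[\mf k_+,\mf p_+^i]\subset\mf p_+^i$.

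One minor remark: your final sentence about independence of representative is unnecessary in degree zero, since $H^0_{DR}$ carries no $d_{\pi'}$-exact ambiguity.
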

See Section \ref{Pre} for the precise definition of $\omega^\nca$.  The following theorem is the main result of this paper.

\begin{theorem}[Theorem \ref{mainD}]\label{intr mainD}
Let the notations and assumptions be as Lemma-Definition \ref{intr Pen defn}.
If for any $\beta\in\Delta_+^\nca$ there exists an $\alpha\in \Delta_+^\c$ such that 
\begin{equation}\label{intr Pen inj conditions}
  (\alpha,\,\mu'-\beta)<0,
\end{equation}
then the Penrose transformation $$\mathscr P:\, H^0(D',L_{\mu'})\to H^q(D,L_{\mu})$$ given by \eqref{intr PenD} is injective.
\end{theorem}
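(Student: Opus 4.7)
My plan is to translate $\mathscr P(s) = 0$ into a coboundary equation on $\mathscr W$, exploit the Hermitian-type hypothesis to eliminate the Maurer-Cartan corrections in $d_\pi$, and then use \eqref{intr Pen inj conditions} to force $s = 0$. Suppose $\mathscr P(s) = 0$ for some $s \in H^0(D', L_{\mu'})$. By \eqref{intr 1} and \eqref{intr PenD}, there is $\eta \in \Gamma(\mathscr W, \Omega_\pi^{q-1}(L_\mu))$ with
$$d_\pi \eta = \omega^{\nca} \wedge \pi'^{*} s.$$
Since the $\pi$-vertical tangent of $\mathscr W$ at the base point is $\mf p_+^1 = \bigoplus_{\beta \in \Delta_+^{\nca}} \mf g_{-\beta}$ and the relative one-forms are $\{\omega^{-\beta}\}_{\beta \in \Delta_+^{\nca}}$, the form $\eta$ expands uniquely as
$$\eta = \sum_{j=1}^{q} (-1)^{j-1}\, \omega^{-\beta_1} \wedge \cdots \wedge \widehat{\omega^{-\beta_j}} \wedge \cdots \wedge \omega^{-\beta_q}\, \eta_j,$$
with each $\eta_j$ a smooth function on $G_\R$ of right $T$-weight $\mu' - \beta_j$.

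Because $G_\R$ is of Hermitian type, $\mf p_+^1$ is abelian, so the Maurer-Cartan equation forces $d_\pi \omega^{-\beta_i} = 0$ for every $i$, and the coboundary relation reduces, on its $\omega^{\nca}$-component, to the scalar equation
$$\sum_{j=1}^{q} (-1)^{j-1}\, e_{-\beta_j}(\eta_j) = \pi'^{*} s,$$
where $e_{-\beta_j}$ is the left-invariant vector field dual to $\omega^{-\beta_j}$. The hypothesis \eqref{intr Pen inj conditions} provides, for each $\beta_j$, a compact positive root $\alpha_j \in \Delta_+^\c$ with $(\alpha_j, \mu' - \beta_j) < 0$. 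My plan is to use these negative pairings, combined with the $D'$-holomorphicity of $s$ (which imposes vanishing conditions on $\pi'^{*} s$ with respect to the anti-holomorphic vector fields in $\mf n'_+$), to iteratively show that each contribution $e_{-\beta_j}(\eta_j)$ projects trivially onto every admissible $K_\R$-isotypic component of $\pi'^{*} s$; the scalar equation then collapses to $\pi'^{*} s = 0$, and since $\pi'$ is surjective onto $D'$ we conclude $s = 0$.

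The main obstacle is converting the strict inequalities \eqref{intr Pen inj conditions} into actual vanishing of the $e_{-\beta_j}(\eta_j)$ in the face of the smooth (rather than holomorphic) character of the coefficients $\eta_j$. I expect to handle this by introducing a Koszul-type filtration of $\Omega_\pi^{\bullet}(L_\mu)$ indexed by subsets of $\Delta_+^{\nca}$ and analyzing the associated spectral sequence: on the $E_1$-page the problem reduces to computing the $\mf p_+^1$-Lie-algebra cohomology with coefficients in a $\mf k$-module of weight $\mu$, on which the hypothesis \eqref{intr Pen inj conditions} supplies the requisite Kostant/Bott vanishing root-by-root and simultaneously for all $\beta_j \in \Delta_+^{\nca}$. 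The Hermitian-type assumption enters at two points: once to guarantee $\mf p_+^1$ is abelian (so the Maurer-Cartan equation for the relative forms is trivial), and once to ensure that the brackets $[\mf k, \mf p_+^1] \subset \mf p$ close in a way compatible with the filtration.
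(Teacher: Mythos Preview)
Your proposal rests on a misidentification of the relative cotangent sheaf. The submersion $\pi:\mathscr W\to D$ is the restriction of $G_\C/T_\C\to G_\C/B$, so its vertical tangent at the base point is $\mf b/\mf h\cong\mf n_+=\mf k_+\oplus\mf p_+^1\oplus\mf p_+^2$, \emph{not} $\mf p_+^1$ alone. Consequently $\Omega_\pi^1$ is spanned by all of $\omega^{-\alpha_i},\omega^{-\beta_j},\omega^{-\gamma_k}$, and a general $\eta\in\Gamma(\mathscr W,\Omega_\pi^{q-1}(L_\mu))$ contains many terms you have omitted. This is precisely the decomposition $\Psi=\Psi'+\Psi''$ in the paper's proof, where $\Psi''$ collects the pieces involving at least one $\omega^{-\alpha_i}$ or $\omega^{-\gamma_k}$. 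Your claim that $d_\pi\omega^{-\beta_j}=0$ is also false: abelianness of $\mf p_+^1$ kills only the $\omega^{-\beta}\wedge\omega^{-\beta'}$ terms, but $[\mf k_+,\mf p_+^1]\subset\mf p_+^1$ produces nonzero Maurer--Cartan contributions $d_\pi\omega^{-\beta_j}\equiv_\pi-\sum C^{\beta_j}_{\alpha_i\beta_{j'}}\omega^{-\alpha_i}\wedge\omega^{-\beta_{j'}}$ (Lemma~\ref{dpi omgega}). It is only the top form $\omega^\nca$ that is $d_\pi$-closed.

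Even granting the scalar equation $\sum_j(-1)^{j-1}e_{-\beta_j}(\eta_j)=\pi'^*s$ (which the paper does recover after a type comparison), your proposed mechanism for forcing $s=0$ is too vague. The coefficients $\eta_j$ are holomorphic on $\mathscr W$ but have no a priori relation to $D'$ or to $K$-finite vectors, so a purely representation-theoretic argument on isotypic components does not get traction. The paper's actual argument is geometric: one modifies $\Psi'$ so that its coefficients become constant along the fibres $B_K/T_\C$ of $\pi_{\mathscr I}:\mathscr W\to\mathscr I$, hence descend to sections of $L_{\mu'-\beta_j}$ over an open piece of the incidence variety $\mathscr I$ covered by cycles $Z_u$. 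The hypothesis $(\alpha,\mu'-\beta_j)<0$ is then fed into Borel--Weil--Bott on the base cycle $Z_o\cong K_\C/B_K$, extended to nearby cycles by upper semicontinuity (Theorem~\ref{vanshing on I prop}), to conclude these sections vanish. This geometric vanishing step on $\mathscr I$ is the missing idea in your outline.
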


The main idea of proving Theorem \ref{intr mainD} can be sketched as follows. Let $[F]$ be the corresponding cohomological class of $F \in \Gamma(\mathscr{W}, \Omega_{\pi'}^\bullet(L_{\mu'}))$, such that $[F \omega^\nca] = 0$. This means that there exists $\Psi \in \Gamma(\mathscr{W}, \Omega_{\pi}^{q-1}(L_{\mu}))$ such that $F \omega^\nca = d_\pi \Psi$. We need to show that $F = 0$.

In the special cases of $SU(2,1)$ and $Sp(4)$ where $q = 1$ and $\Psi$ is a global section of $L_{\mu}$ on $\mathscr{W}$, there is a natural space $\mathscr{J}$ and a holomorphic submersion $\mathscr{W} \to \mathscr{J}$, so that $\Psi$ descends to a global section in $\Gamma(\mathscr{J}, L_{\mu})$. Moreover, $\mathscr{J}$ is covered by the cycles isomorphic to $\mathbb{P}^1$. Hence, for the weight $\mu$ in a certain range, one has the vanishing theorem 
$\Gamma(\mathbb{P}^1, L_{\mu}|_{\mathbb{P}^1}) = 0$, 
which implies that $\Psi \in \Gamma(\mathscr{W}, L_{\mu}) = 0$. This proves that $F \omega^\nca = d_\pi \Psi = 0$, and hence $F = 0$. Please see \cite{GGK}, Lecture 8, and \cite{GGK14} for details.

In contrast to the cases of $SU(2,1)$ and $Sp(4)$, in general we do not have the natural space similar to $\mathscr{J}$,  and when $q =\#\Delta_+^\nca\geq 2$,  $\Psi$ may involve differential forms from $\mathfrak{k}_+^*$ and $(\mathfrak{p}_+^2)^*$. 
To overcome the difficulty we turn to study the incidence space
$$ \mathscr{I} = \{(x,u) \in D \times \mathscr{U} : x \in Z_u\} $$
with a holomorphic submersion $\mathscr{W} \to \mathscr{I}$ and prove the vanishing theorem on an open subset of $\mathscr{I}^o \subset \mathscr{I}$ in Theorem \ref{vanshing on I prop}. Here, $\mathscr{U}$ is the cycle space for the non-classical flag domain $D$. In the proof of Theorem \ref{vanshing on I prop}, we use crucially the upper semi-continuity of the dimensions of a differential family of vector bundles proved by Kodaira and Spencer in \cite{KS3}.

Let $\Psi'$ be the projection of $\Psi$ onto $\wedge^{q-1} (\mathfrak{p}_+^1)^*$. In the proof of Theorem \ref{mainD}, we use the locally trivial fibration structure $B_K/T_\C \times V_\nu \times U$ of $\mathscr{W}$, where $\{V_\nu \subset K_\C/B_K\}_\nu $ is an open cover on which the homogenous vector bundles are trivialized and $U\subset \mathscr U$ is an open subset containing the base cycle $u_o\cong Z_o$ of $D$. Then we define $\Xi$ from $\Psi'$ such that $\Xi$ is constant on $B_K/T_\C$, which is the fiber of $\mathscr{W} \to \mathscr{I}$. Hence $\Xi$ descends to a section on an open subset $\mathscr{I}^o=K_\C/B_K\times U$ of $\mathscr I$ containing the base cycle $K_\C/B_K\times \{u_o\}$, and $\Xi$ is zero from the vanishing theorem on $\mathscr{I}^o$. Hence $\Xi=0$ on the open subset $(K_\C / T_\C)\times U$ of $\mathscr W$, and 
$$F \omega^\nca |_{(K_\C / T_\C) \times U}=\partial_{\mf p_+^1} \Xi=0,$$
where $\partial_{\mf p_+^1}$ denotes the differentials of the coefficients of $\Xi$ in the directions corresponding to $\mf p_+^1$.
This implies that $F=0$ on the open subset $(K_\C / T_\C) \times U$ of $\mathscr W$, and $[F ]= 0$ as the cohomological class in $\Gamma(\mathscr{W}, \Omega_{\pi'}^\bullet(L_{\mu'}))$.
This is the main idea of the proof of Theorem \ref{intr mainD}.

Next, we introduce the Penrose transformation on the compact quotients of flag domains.

Let $\Gamma\subset G_\R$ be a discrete, co-compact and neat subgroup. Then
$X=\Gamma\backslash D $ and $X'=\Gamma\backslash D'$ are compact complex manifolds, and $\mathscr W_\Gamma=\Gamma\backslash \mathscr W$ is a complex manifold which is also Stein. 
Then the basic diagram \eqref{intr basic diagram} becomes
\begin{equation}\label{intr basic diagram compact}
\xymatrix{
&\mathscr W_\Gamma\ar[ld]^-{\pi}\ar[rd]_-{\pi'}&\\
X & & X'.
}\end{equation}

\begin{lemdef}\label{intr Pen defn compact}
Let the notations and assumptions be as Lemma-Definition \ref{intr Pen defn}.
Let $X=\Gamma\backslash D $ and $X'=\Gamma\backslash D'$ be the corresponding quotients by a discrete, co-compact and neat subgroup $\Gamma\subset G_\R$.
Then the Penrose transformation on automorphic cohomology groups
$$\mathscr P:\, H^0(X',L_{\mu'})\to H^q(X,L_{\mu})$$ is defined by
\begin{equation}\label{intr PenD compact}
  \xymatrix{
  H^0(X',L_{\mu'}) \ar[d]^-{\cong}\ar[rr]^-{\mathscr P}&& H^q(X,L_{\mu})\ar[d]^-{\cong}\\
  H^0_{DR}(\mathscr W_\Gamma,\Omega_{\pi'}^\bullet(L_{\mu'}))\ar[rr]^-{\omega^\nca_\Gamma}&&H^q_{DR}(\mathscr W_\Gamma,\Omega_{\pi}^\bullet(L_{\mu})),
  }
\end{equation}
where $\omega^\nca_\Gamma$ is the multiplication by the differential form $\omega^\nca_\Gamma$ on $\mathscr W_\Gamma$ to which the left-invariant differential form $\omega^\nca$ given in Lemma-Definition \ref{intr Pen defn} descends.
\end{lemdef}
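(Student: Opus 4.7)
The plan is to verify that Lemma-Definition \ref{intr Pen defn compact} follows from Lemma-Definition \ref{intr Pen defn} by descent under the free action of $\Gamma$. Since $\Gamma\subset G_\R$ is neat, it acts freely on $D$, $D'$ and $\mathscr W$, so that $X=\Gamma\backslash D$, $X'=\Gamma\backslash D'$ and $\mathscr W_\Gamma=\Gamma\backslash\mathscr W$ are smooth complex manifolds, and the quotient maps are coverings. The $G_\R$-equivariant homogeneous line bundles $L_\mu\to D$ and $L_{\mu'}\to D'$ descend to line bundles on $X$ and $X'$, still denoted $L_\mu$ and $L_{\mu'}$, and the $G_\R$-equivariant projections $\pi,\pi'$ descend to the holomorphic submersions appearing in diagram \eqref{intr basic diagram compact}. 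Because $\Gamma$ acts freely, the fibers of the descended $\pi$ and $\pi'$ are biholomorphic to the original fibers in \eqref{intr basic diagram} and so remain contractible.

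The differential form $\omega^\nca=\omega^{-\beta_1}\wedge\cdots\wedge\omega^{-\beta_q}$ appearing in \eqref{intr PenD} is by construction left $G_\R$-invariant, hence in particular left $\Gamma$-invariant, so it descends to a differential $q$-form $\omega^\nca_\Gamma$ on $\mathscr W_\Gamma$ taking values in the descended line bundle $L_{-2\rho_\nca}$. The $d_\pi$-closedness of $\omega^\nca$ established in Lemma-Definition \ref{intr Pen defn} is local and is preserved by descent; similarly, the weight-matching identity $\mu+\rho=\mu'+\rho'$ that makes wedging with $\omega^\nca$ send $\Omega^\bullet_{\pi'}(L_{\mu'})$ into $\Omega^{\bullet+q}_\pi(L_\mu)$ is a fiberwise linear-algebra statement, unaffected by the quotient. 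Consequently, wedging with $\omega^\nca_\Gamma$ commutes with $d_\pi$ and therefore induces a well-defined map at the level of de Rham cohomology, which gives the bottom row of \eqref{intr PenD compact}.

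It then remains to establish the two vertical isomorphisms in \eqref{intr PenD compact}, which I would obtain by invoking the EGW theorem of \cite{EGW} on $\mathscr W_\Gamma$ exactly as was done in \eqref{intr 1} and \eqref{intr 2}. The hypotheses of that theorem are in place: $\mathscr W_\Gamma$ is Stein, as recalled just above the statement, and the descended maps $\pi,\pi':\mathscr W_\Gamma\to X,X'$ have contractible fibers by the first paragraph. This is the step that requires the most care, since it is the only point at which the compact quotient setting genuinely differs from the homogeneous one; the verification amounts to checking that the relative de Rham complexes $\Omega^\bullet_\pi(L_\mu)$ and $\Omega^\bullet_{\pi'}(L_{\mu'})$ on $\mathscr W_\Gamma$ are the $\Gamma$-quotients of their counterparts on $\mathscr W$, so that the same spectral-sequence argument producing \eqref{intr 1} and \eqref{intr 2} yields
$$H^*(X,L_\mu)\cong H^*_{DR}(\mathscr W_\Gamma,\Omega^\bullet_\pi(L_\mu)),\qquad H^*(X',L_{\mu'})\cong H^*_{DR}(\mathscr W_\Gamma,\Omega^\bullet_{\pi'}(L_{\mu'})).$$
Combining these isomorphisms with the well-definedness of multiplication by $\omega^\nca_\Gamma$ completes the construction of the Penrose transformation $\mathscr P\colon H^0(X',L_{\mu'})\to H^q(X,L_\mu)$.
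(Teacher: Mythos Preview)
Your proposal is correct and follows essentially the same approach as the paper: the paper likewise argues that the homogeneous bundles and the left-invariant form $\omega^\nca$ descend under the free $\Gamma$-action, that $\mathscr W_\Gamma$ is Stein (citing \cite{GGK}), and that the EGW theorem then supplies the vertical isomorphisms, while the well-definedness computation from Lemma-Definition~\ref{Pen defn} is local and hence survives the quotient. One small wording issue: saying ``wedging with $\omega^\nca_\Gamma$ commutes with $d_\pi$'' is not quite the point---what you need (and what the paper checks in the proof of Lemma-Definition~\ref{Pen defn}) is that a $d_{\pi'}$-closed section wedged with $\omega^\nca$ becomes $d_\pi$-closed, which uses that $(d_\pi-d_{\pi'})F$ lies in the span of the $\omega^{-\beta_j}$ and is therefore annihilated by $\omega^\nca$; but since you explicitly defer to Lemma-Definition~\ref{intr Pen defn} for this, the argument is complete.
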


Applying the proof of Theorem \ref{intr mainD}, we have the following theorem on the injectivity of the Penrose transformation on automorphic cohomology groups.

\begin{theorem}[Theorem \ref{mainD compact1}]\label{intr mainD compact1}
Let the notations be as Lemma-Definition \ref{intr Pen defn compact}. 
If for any $\beta\in\Delta_+^\nca$ there exists an $\alpha\in \Delta_+^\c$ such that 
\begin{equation}
  (\alpha,\,\mu'-\beta)<0,\tag{\ref{intr Pen inj conditions}}
\end{equation}
then the Penrose transformation on automorphic cohomology groups $$\mathscr P:\, H^0(X',L_{\mu'})\to H^q(X,L_{\mu})$$ given by Lemma-Definition \ref{intr Pen defn compact} is injective.
\end{theorem}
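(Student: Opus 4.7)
The plan is to reduce Theorem \ref{intr mainD compact1} directly to Theorem \ref{intr mainD} via the universal cover $\mathscr{W} \to \mathscr{W}_\Gamma = \Gamma \backslash \mathscr{W}$. Since $\Gamma$ acts freely and properly discontinuously on $D$, $D'$ and $\mathscr{W}$ compatibly with the projections $\pi$ and $\pi'$, the fibers of $\pi \colon \mathscr{W}_\Gamma \to X$ and $\pi' \colon \mathscr{W}_\Gamma \to X'$ coincide with those of the corresponding maps on the cover, hence remain contractible; combined with the fact (recorded in the excerpt) that $\mathscr{W}_\Gamma$ is Stein, this ensures that the EGW isomorphisms underlying \eqref{intr PenD compact} are $\Gamma$-equivariantly compatible with those in \eqref{intr PenD}, so that pullback identifies sections on $\mathscr{W}_\Gamma$ with $\Gamma$-invariant sections on $\mathscr{W}$.

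Given $F \in H^0(X', L_{\mu'})$ with $\mathscr{P}([F]) = 0$ in $H^q(X, L_\mu)$, diagram \eqref{intr PenD compact} produces $\Psi \in \Gamma(\mathscr{W}_\Gamma, \Omega_\pi^{q-1}(L_\mu))$ with $F\, \omega^\nca_\Gamma = d_\pi \Psi$. Pulling everything back to $\mathscr{W}$ yields $\Gamma$-invariant lifts $\tilde F$ and $\tilde \Psi$; since $\omega^\nca$ is left-$G_\R$-invariant it is $\Gamma$-invariant and is the pullback of $\omega^\nca_\Gamma$, so we obtain $\tilde F\, \omega^\nca = d_\pi \tilde \Psi$ on $\mathscr{W}$. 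Equivalently, $[\tilde F] \in H^0(D', L_{\mu'})$ is killed by the non-compact Penrose transformation $\mathscr{P} \colon H^0(D', L_{\mu'}) \to H^q(D, L_\mu)$. Since the weight hypothesis \eqref{intr Pen inj conditions} is precisely that of Theorem \ref{intr mainD}, it follows that $\tilde F = 0$; but $\tilde F$ is the pullback of $F$, so $F = 0$, proving injectivity.

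The main technical point to be handled carefully is the commutativity of the Penrose construction with the étale covering $\mathscr{W} \to \mathscr{W}_\Gamma$: one must verify that the EGW isomorphisms \eqref{intr 1} and \eqref{intr 2} and their compact-quotient analogues intertwine with $\Gamma$-invariance, and that multiplication by $\omega^\nca_\Gamma$ on $\mathscr{W}_\Gamma$ lifts to multiplication by $\omega^\nca$ on $\mathscr{W}$. Both follow from the naturality of the EGW theorem under étale base change and the left-invariance of $\omega^\nca$, and can be packaged into a short compatibility lemma. With that lemma in hand, Theorem \ref{intr mainD} is invoked as a black box, and no additional content from its internal proof via the incidence space $\mathscr I$ is needed; this is in keeping with the comment in the excerpt that Theorem \ref{intr mainD compact1} is obtained by ``applying the proof of Theorem \ref{intr mainD}''.
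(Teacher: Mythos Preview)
Your argument is correct, and it is a slightly different (and arguably cleaner) route than the one the paper takes. The paper does not actually lift to the cover and invoke Theorem~\ref{mainD} as a black box; rather, it observes that the entire proof of Theorem~\ref{mainD} is local in the following sense: the vanishing of $F_\sigma$ is established on a single open set ${\pi'}^{-1}(U)\subset\mathscr W$, with $U\subset\mathscr U$ a small neighborhood of the base cycle, and then extended by analytic continuation. Since such a $U$ can be chosen small enough that ${\pi'}^{-1}(U)$ embeds isomorphically into $\mathscr W_\Gamma$, the same computation with $\Xi$ and the vanishing theorem on $\pi_\mathscr{U}^{-1}(U)$ runs verbatim on the quotient. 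That is why the paper simply writes ``Similar to Theorem~\ref{mainD}'' and gives no separate proof.

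Your approach trades this inspection of the proof's locality for the (straightforward) verification that pullback along the \'etale cover $\mathscr W\to\mathscr W_\Gamma$ intertwines $d_\pi$, $d_{\pi'}$, and multiplication by $\omega^{\nca}$; once that is checked, Theorem~\ref{mainD} applies to the lifted $\tilde F$ with no further work. This has the advantage that you never need to look inside the proof of Theorem~\ref{mainD} or reproduce the construction of $\Xi$ and the incidence-space vanishing argument. The paper's approach, on the other hand, avoids having to formulate the compatibility of EGW with the covering, since it simply reruns a local computation. Both are short and valid; your version is the more conceptual one.
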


Hence, in order for the Penrose transformation on the compact quotients to be an isomorphism, we only need to check the dimensions of the both sides.
Thanks to Theorem 2.4 in \cite{Wi2}, we know that 
$$\dim_\C H^0(X',L_{\mu'}) = \dim_\C H^q(X,L_{\mu})$$ 
depends only on the number determined by the weight $$\mu+\rho = \mu'+\rho'$$ provided that $\mu+\rho = \mu'+\rho'$ is regular in some Weyl chamber and satisfies Property \textbf{W} of Williams in Lemma \ref{W lemma}, which is equivalent to conditions \eqref{intr PW} below.

\begin{theorem}[Theorem \ref{mainD compact2}]\label{intr mainD compact2}
Let the notations be as Lemma-Definition \ref{intr Pen defn compact}.
Let $\mu$ and $\mu'$ be two weights such that $\mu+\rho=\mu'+\rho'$ is regular and $\mu'+\rho'$ lies in the Weyl chamber determined by the set of roots,
\begin{equation}\label{intr chamber}
  \Delta_+^\c \cup \Delta_+^\nca\cup\left(-\Delta_+^\ncb \right)=\Delta_+^\c \cup\left(-{\Delta'}_+^\nc \right).
\end{equation}
If  conditions \eqref{intr Pen inj conditions} are satisfied and moreover, 
\begin{equation}\label{intr PW}
 (\mu'+2\rho'_\nc,\, -{\Delta'}_+^\nc )>0,
\end{equation}
then the Penrose transformation on automorphic cohomology groups $$\mathscr P:\, H^0(X',L_{\mu'})\to H^q(X,L_{\mu})$$ is an isomorphism.
\end{theorem}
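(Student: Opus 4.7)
The strategy is to combine the injectivity already proved in Theorem \ref{intr mainD compact1} with a dimension count furnished by Williams' Theorem 2.4 in \cite{Wi2}. Since $X=\Gamma\backslash D$ and $X'=\Gamma\backslash D'$ are compact, both $H^0(X',L_{\mu'})$ and $H^q(X,L_{\mu})$ are finite dimensional, so once the two dimensions are shown to coincide, any injective linear map between them is automatically an isomorphism.

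First I would invoke Theorem \ref{intr mainD compact1}: hypothesis \eqref{intr Pen inj conditions} is part of the standing assumptions of the present theorem, so the Penrose transformation
\[
\mathscr P:\, H^0(X',L_{\mu'})\to H^q(X,L_{\mu})
\]
is injective. What remains to prove is the numerical identity
\[
\dim_\C H^0(X',L_{\mu'}) \;=\; \dim_\C H^q(X,L_{\mu}).
\]

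For this I would apply Williams' theorem in the form recalled in the paragraph preceding the statement: for a regular weight $\lambda=\mu+\rho=\mu'+\rho'$ satisfying Property \textbf{W}, the dimension of the unique non-vanishing automorphic cohomology group of a homogeneous line bundle associated to $\lambda$ on a compact quotient of $G_\R/T$ depends only on $\lambda$, not on the choice of $G_\R$-invariant complex structure. Regularity is a hypothesis, and Property \textbf{W} is equivalent via Lemma \ref{W lemma} to the inequalities \eqref{intr PW}, which are also assumed. The chamber hypothesis \eqref{intr chamber} is what pins down the cohomological degrees on the two sides: on the classical side $\lambda$ is dominant on ${\Delta'}_+^\c$ and anti-dominant on ${\Delta'}_+^\nc$, so the unique surviving group is $H^0(X',L_{\mu'})$; on the non-classical side the flip between ${\Delta'}_+^\nc$ and $\Delta_+^\nc$ accounts for exactly $q=|\Delta_+^\nca|$ sign changes of the inner products of $\lambda$ with the positive roots, so the unique surviving group is $H^q(X,L_{\mu})$. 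Williams' theorem applied on each side then delivers the required equality of dimensions, and combined with injectivity this gives the desired isomorphism.

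The main obstacle I would expect lies in the bookkeeping of the second step rather than in any deep new argument: one must verify carefully that the chamber condition \eqref{intr chamber} together with \eqref{intr PW} simultaneously place $\lambda$ in the ``right'' Weyl chamber for both invariant complex structures on $G_\R/T$, so that the Schmid/Williams $n$-function records $0$ for $(D',\mu')$ and exactly $q$ for $(D,\mu)$, and that Property \textbf{W} in the sense of \cite{Wi2} is invariant under this change of complex structure. Once this compatibility is in place, the theorem follows at once from the injectivity of $\mathscr P$ and the dimension equality produced by Williams' theorem.
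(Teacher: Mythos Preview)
Your proposal is correct and follows essentially the same approach as the paper: invoke Theorem \ref{intr mainD compact1} for injectivity, then use Williams' Theorem (Theorem \ref{W thm}) to equate both dimensions with the multiplicity $m_{-(\mu+\rho)}(\Gamma)$. The only work the paper adds beyond your outline is the explicit verification that under the chamber hypothesis \eqref{intr chamber} one has $q'(\mu'+\rho')=0$, $q(\mu+\rho)=q$, and that condition \eqref{intr PW} is literally Property \textbf{W} rewritten (since $\mu+\rho-\tfrac{1}{2}\sum_{(\mu+\rho,\alpha)>0}\alpha=\mu'+2\rho'_\nc$), exactly the bookkeeping you anticipated.
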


As an application of the main results of this paper, we provide an important example of homogeneous line bundles derived from the canonical bundles on Hermitian symmetric domains, which satisfy the conditions in Theorem \ref{intr mainD compact2}. This example has significant applications in arithmetic geometry and number theory.

Let
$$\mu'_c=-\sum_{\beta \in {\Delta'}_+^\nc}\beta= 2\rho_\nca-2\rho_\ncb.$$ 
and $k_0$ the maximal positive integer such that $\mu'_{c0}\triangleq \mu'_c/k_0$ is still a weight.
Let $$\mu_k'=k\mu'_{c0}=\frac{k}{k_0}(2\rho_\nca-2\rho_\ncb),\,\mu_k=\mu_k'-2\rho_\nca.$$ 
Then
$$L_{\mu_k'}=\omega_{\mathbb B}^{\otimes k/k_0}\to D',\, L_{\mu_k}\to D$$
are two line bundles whose cohomology groups are Penrose related, where $\omega_{\mathbb B}$ is
the pull-back of the canonical bundle on $G_\R/K\cong \mathbb B$ via the holomorphic projection map
$$p':\, D'\to G_\R/K.$$

\begin{theorem}[Theorem \ref{automorphic theorem}]
Let the notations be as above. Then there exists a positive integer $N$ such that for $k\ge N$, the Penrose transformation 
\begin{equation}\label{intr PT automorphic form}
 \mathscr P:\, H^0(X',\omega_{\mathbb B}^{\otimes k/k_0})\to H^q(X,L_{\mu_k})
\end{equation}
is an isomorphism. Therefore $H^q(X,L_{\mu_k})$ is isomorphic to the group $$H^0(\Gamma\backslash \mathbb B,\omega_{\mathbb B}^{\otimes k/k_0})$$ of automorphic forms on $\mathbb B$.
\end{theorem}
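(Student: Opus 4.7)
The plan is to reduce the statement to Theorem~\ref{intr mainD compact2} applied to the specific weights $\mu_k,\mu_k'$, and then identify the source of the Penrose transformation with automorphic forms on $\mathbb{B}$ via the projection $X'\to\Gamma\backslash\mathbb{B}$.

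First I would verify the four hypotheses of Theorem~\ref{intr mainD compact2} for $k$ sufficiently large. The equality $\mu_k+\rho=\mu_k'+\rho'$ is immediate from $\mu_k=\mu_k'-2\rho_\nca$ together with $\rho-\rho'=2\rho_\nca$, since $\rho_\c'=\rho_\c$ and $\rho_\nc'=-\rho_\nca+\rho_\ncb$. For the three remaining numerical conditions, the decisive input from the Hermitian hypothesis is that $\rho_\nc'$ coincides (up to sign) with the half-sum of the noncompact roots of one sign for the Hermitian structure on $G_\R/K$, hence is $W_\c$-invariant, i.e.\ $(\alpha,\rho_\nc')=0$ for all $\alpha\in\Delta_\c$; and that the abelianness of $\mf p_\pm$ in $\mf g$ forces $(\gamma,\gamma')\ge 0$ for any two roots $\gamma,\gamma'$ in $\Delta_+'^\nc$, with strict positivity on the diagonal, so that $(\rho_\nc',\gamma)>0$ for every $\gamma\in\Delta_+'^\nc$.

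Writing $\mu_k'=-(2k/k_0)\rho_\nc'$, each of the three conditions reduces to a short inequality. For the Weyl-chamber and regularity condition, one has $(\mu_k'+\rho',\alpha)=(\rho',\alpha)>0$ uniformly in $k$ for $\alpha\in\Delta_+^\c$ by the $W_\c$-invariance of $\rho_\nc'$ and the standard positivity of $(\rho',\alpha)$ for positive roots of $D'$; for the remaining roots $\beta\in\Delta_+^\nca$ and $-\gamma$ with $\gamma\in\Delta_+^\ncb$ defining the chamber \eqref{intr chamber}, the pairing has leading term of order $k\cdot|(\rho_\nc',\cdot)|>0$, so dominates the bounded correction from $\rho'$ once $k$ is large. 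For Property~\textbf{W}, a direct substitution yields $\mu_k'+2\rho_\nc'=\tfrac{2(k_0-k)}{k_0}\rho_\nc'$, hence
\[
(\mu_k'+2\rho_\nc',-\gamma)=\tfrac{2(k-k_0)}{k_0}(\rho_\nc',\gamma)>0
\]
for every $\gamma\in\Delta_+'^\nc$ and every $k>k_0$. The Penrose injectivity condition \eqref{intr Pen inj conditions} reduces, again using $(\alpha,\mu_k')=0$, to the $k$-independent statement that for every $\beta\in\Delta_+^\nca$ there exists $\alpha\in\Delta_+^\c$ with $(\alpha,\beta)>0$; this I would establish by combining the compatibility of $\Delta_+$ with the Hermitian decomposition $\mf p=\mf p_+^H\oplus\mf p_-^H$ (so that $\Delta_+^\nca$ is contained in the $\mf p_+^H$-weights) with the equality $\Delta_+^\nca\cup(-\Delta_+^\ncb)=\Delta^{\nc,+}$ and the observation that the lowest $W_\c$-weight of $\mf p_+^H$ lies in $-\Delta_+^\ncb$ for the non-classical positive system under consideration. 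Taking $N$ to exceed the finitely many thresholds so obtained, Theorem~\ref{intr mainD compact2} yields an isomorphism $\mathscr P\colon H^0(X',L_{\mu_k'})\xrightarrow{\cong}H^q(X,L_{\mu_k})$ for every $k\ge N$. Finally, the holomorphic projection $p'\colon D'\to G_\R/K\cong\mathbb{B}$ descends to a holomorphic submersion $\bar p'\colon X'\to\Gamma\backslash\mathbb{B}$ with compact connected fibers isomorphic to the flag manifold $K/T$; since $\omega_{\mathbb{B}}^{\otimes k/k_0}$ on $X'$ is pulled back from $\Gamma\backslash\mathbb{B}$ and $\bar p'_*\mathcal{O}_{X'}=\mathcal{O}_{\Gamma\backslash\mathbb{B}}$ (as $H^0(K/T,\mathcal{O})=\C$), the projection formula gives $H^0(X',\omega_{\mathbb{B}}^{\otimes k/k_0})\cong H^0(\Gamma\backslash\mathbb{B},\omega_{\mathbb{B}}^{\otimes k/k_0})$, the space of automorphic forms of weight $k/k_0$ on $\mathbb{B}$.

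The main obstacle is the verification of the Penrose injectivity condition. Because $(\alpha,\mu_k'-\beta)=-(\alpha,\beta)$ is independent of $k$, this inequality cannot be arranged by enlarging $N$, and must be derived root-theoretically from the interaction between the non-classical decomposition $\Delta_+^\nc=\Delta_+^\nca\cup\Delta_+^\ncb$, the Hermitian positive system, and the chamber \eqref{intr chamber}. The other three hypotheses are comparatively routine linear-in-$k$ calculations, and the identification with automorphic forms is a standard projection-formula argument.
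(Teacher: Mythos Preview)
Your reduction to Theorem~\ref{intr mainD compact2} and the verification of the Weyl-chamber condition and Property~\textbf{W} for large $k$ are essentially the same as the paper's Steps~2--3, and your appeal to $W_\c$-invariance of $\rho_\nc'$ is equivalent to (and arguably cleaner than) the paper's explicit root-string argument in Step~1. The final identification of $H^0(X',\omega_{\mathbb B}^{\otimes k/k_0})$ with automorphic forms on $\Gamma\backslash\mathbb B$ via the projection formula is a welcome addition; the paper leaves this implicit.

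The genuine gap is exactly where you flag it: the injectivity condition \eqref{intr Pen inj conditions}. You correctly reduce it to the $k$-independent assertion that every $\beta\in\Delta_+^\nca$ satisfies $(\alpha,\beta)>0$ for some $\alpha\in\Delta_+^\c$, but your justification---that the lowest $W_\c$-weight of $\mf p_+^H$ lies in $-\Delta_+^\ncb$---is a restatement of the claim rather than a proof. If that lowest weight $\beta_0$ were in $\Delta_+^\nca$, nothing you have written rules it out: one would have $(\beta_0,\alpha)\le 0$ for all $\alpha\in\Delta_+^\c$, which is entirely consistent with the abelian inequalities you cite. The paper isolates this as a standalone result (Lemma~\ref{beta alpga >0}) and its proof is not short: for $\beta\in\Delta_+^\nca$ with $\beta+\gamma\in\Delta_+^\c$ for some $\gamma\in\Delta_+^\ncb$ it uses a root-string-length case analysis together with Wolf's classification of Hermitian symmetric spaces (handling types B and C separately, with $Sp(4)$ requiring a swap of $\Delta_+^\nca$ and $\Delta_+^\ncb$); for the remaining $\beta$ it invokes Proposition~2.2 of \cite{LiuShen24}, the key input from the authors' resolution of the Green--Griffiths--Kerr conjecture. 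Without supplying an argument of comparable depth, your proof is incomplete at precisely the point you identify as the main obstacle.
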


Finally we consider the cup-products of the automorphic cohomology groups on $D$, which are Penrose related to automorphic cohomology groups on $\mathbb B$ and $\bar{\mathbb B}$, with the target being a TDLDS (totally degenerate limits of discrete series).

\begin{theorem}
Let $D=G_{\R}/T$ be a non-classical flag domain with $G_{\R}$ of Hermitian type. Let $X=\Gamma\backslash D$ be a smooth and compact quotient of $D$. Then

(1) if $G_{\C}=SU(s+1,s)$, then there exists a positive integer $N$ such that for $k\ge N$, 
we have a homomorphism of groups
\begin{equation*}
H^0(\Gamma\backslash\mathbb B,\omega_{\mathbb B}^{\otimes k/k_0})\times H^0(\Gamma\backslash \bar{\mathbb B}, \omega_{\bar{\mathbb B}}^{\otimes k/k_0}) \to H^{d}(X,L_{-\rho})^*;
\end{equation*}
and

(2) if $G_{\C}=Sp(6,\C)$, then there exist $\beta\in \Delta_{+}^{\nc}$ and a positive integer $N$ such that for $k\ge N$, 
we have a homomorphism of groups
\begin{equation*}
H^0(\Gamma\backslash\mathbb B,\omega_{\mathbb B}^{\otimes k/k_0})\times H^0(\Gamma\backslash \bar{\mathbb B}, \omega_{\bar{\mathbb B}}^{\otimes k/k_0}\otimes L_{\beta}) \to H^{d}(X,L_{-\rho})^*.
\end{equation*}
\end{theorem}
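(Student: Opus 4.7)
The plan is to combine two instances of Theorem~\ref{automorphic theorem} via the cup product on $X$ and Serre duality. Theorem~\ref{automorphic theorem} itself gives, for $k\ge N_1$, a Penrose isomorphism
$$ \mathscr{P}^{(1)}\colon H^0(\Gamma\backslash\mathbb{B},\omega_{\mathbb{B}}^{\otimes k/k_0}) \xrightarrow{\sim} H^{q_1}(X, L_{\mu_k^{(1)}}), $$
with $q_1=\#\Delta_+^\nca$ and $\mu_k^{(1)}$ the weight $\mu_k$ defined in Theorem~\ref{automorphic theorem}. The non-classical flag domain $D$ admits a second classical partner $D''$, obtained by flipping the signs of the roots in $\Delta_+^\ncb$ rather than $\Delta_+^\nca$; this partner comes with a holomorphic projection $D''\to\bar{\mathbb{B}}$. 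Repeating the proof of Theorem~\ref{automorphic theorem} with the roles of $\Delta_+^\nca$ and $\Delta_+^\ncb$ interchanged then produces, for $k\ge N_2$, a second Penrose isomorphism
$$ \mathscr{P}^{(2)}\colon H^0(\Gamma\backslash\bar{\mathbb{B}},\omega_{\bar{\mathbb{B}}}^{\otimes k/k_0}) \xrightarrow{\sim} H^{q_2}(X, L_{\mu_k^{(2)}}), $$
with $q_2=\#\Delta_+^\ncb$ and $\mu_k^{(2)}$ the analogously defined weight. Set $N=\max(N_1,N_2)$.

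Next, I compose with the cup product in sheaf cohomology
$$ H^{q_1}(X, L_{\mu_k^{(1)}})\otimes H^{q_2}(X, L_{\mu_k^{(2)}}) \xrightarrow{\,\cup\,} H^{q_1+q_2}\bigl(X,\,L_{\mu_k^{(1)}+\mu_k^{(2)}}\bigr). $$
A direct weight computation gives $q_1+q_2=\#\Delta_+^\nc$ and $\mu_k^{(1)}+\mu_k^{(2)}=-2\rho_\nc$, so the cup product lands in $H^{\#\Delta_+^\nc}(X, L_{-2\rho_\nc})$. The final step is to map this space into $H^d(X, L_{-\rho})^*$ using Serre duality on the compact complex manifold $X$ (with $K_X=L_{-2\rho}$) together with the cup-product pairing with a suitable complementary fixed class.

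For $G_\C=SU(s+1,s)$, the combinatorics of the simply-laced root system $A_{2s}$ make the weights and degrees align on the nose, so composing $\mathscr{P}^{(1)}$, $\mathscr{P}^{(2)}$, the cup product, and the Serre-duality identification produces the desired homomorphism. For $G_\C=Sp(6,\C)$, the doubly-laced $C_3$ structure produces a mismatch by a single non-compact positive root $\beta$, which is absorbed by twisting $\mathscr{P}^{(2)}$ by $L_\beta$: one applies the Penrose construction to $\omega_{\bar{\mathbb{B}}}^{\otimes k/k_0}\otimes L_\beta$ on $D''$, which is still covered by Theorem~\ref{automorphic theorem} once $k$ is large enough for the regularity of $\mu+\rho$ and conditions~(\ref{intr Pen inj conditions}) and~(\ref{intr PW}) to survive the shift by $\beta$.

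The principal obstacle is the precise weight-and-degree bookkeeping at the final Serre-duality step, which is where the $A$-type versus $C$-type dichotomy forces the twist by $L_\beta$ in case~(2); identifying the correct $\beta$ for $Sp(6,\C)$ and verifying that the cup-product image pairs correctly against $L_{-\rho}$ is the heart of the argument. A secondary technical point is to check, uniformly for $k\ge N$, that the hypotheses of Theorem~\ref{automorphic theorem} continue to hold for both classical partners $D'$ and $D''$---and in case~(2) also after the $L_\beta$-twist---so that both Penrose transformations remain genuine isomorphisms.
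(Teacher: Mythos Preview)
Your overall strategy---two Penrose isomorphisms (one via $D'$, one via $D''$), then cup product, then Serre duality---matches the paper's. But there is a genuine gap at the Serre-duality step. You correctly compute that the untwisted cup product lands in $H^{q_1+q_2}(X,L_{-2\rho_\nc})$; Serre duality with $K_X=L_{-2\rho}$ then identifies this with $H^d(X,L_{-2\rho_\c})^*$, not $H^d(X,L_{-\rho})^*$. Your appeal to ``a suitable complementary fixed class'' does not explain how to bridge this discrepancy, and no such natural class exists. The paper does \emph{not} pass through $L_{-2\rho_\nc}$ at all: it twists from the outset so that the two weights on $X$ sum to $-\rho$, and then Serre duality gives $H^d(X,L_{-\rho})^*$ directly.

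Concretely, the governing constraint is that the two twists $\mu_0$ (on the $\mathbb B$ side) and $\lambda_0$ (on the $\bar{\mathbb B}$ side) must satisfy $\mu_0+\lambda_0=\rho_\nc-\rho_\c$; then $\mu_{k,0}+\lambda_{k,0}=-\rho$ on the nose. For $SU(s+1,s)$ one computes $\rho_\c=\rho_\nc$ (this is a specific $A_{2s}$ calculation, not merely ``simply-laced''), so $\mu_0=\lambda_0=0$ works and no twist is needed. For $Sp(6,\C)$ one has $\rho_\nc-\rho_\c=e_1^*+e_3^*$, which \emph{happens} to lie in $\Delta_+^\nca$; taking $\mu_0=0$, $\lambda_0=\beta=e_1^*+e_3^*$ is then admissible. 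In both cases one must still verify that the twisted Penrose transform on the $\bar{\mathbb B}$ side is an isomorphism: this is not covered by Theorem~\ref{automorphic theorem} as you claim, but requires the twisted version (Theorem~\ref{twist automorphic theorem}), whose hypotheses \eqref{mu0>0} and \eqref{mu0 le 0} for $\lambda_0=\beta$ the paper checks by hand against the explicit root lists for $C_3$.
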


The paper is organized as follows. In Section \ref{Pre}, we introduce the basic notations for the geometry of flag domains and review our results on new complex structures for non-classical flag domains. In Section \ref{Corres}, we introduce the correspondence space and the incidence variety for flag domains, and relate the cohomology groups of homogeneous line bundles on flag domains to the de Rham cohomology groups on the correspondence space. An important vanishing theorem is proved in this section, which is crucial to the
proof of the injectivity of the Penrose transformation. In Section \ref{section Pen}, we define the Penrose transformation on flag domains and establish its injectivity under certain conditions. In Section \ref{section Pen compact}, we introduce the Penrose transformation on the compact quotients of flag domains and prove that it is an isomorphism under the conditions given in Section \ref{section Pen} and the additional conditions induced by Property \textbf{W} of Williams. We also apply our results to the groups of automorphic forms on Hermitian symmetric domains, which can be identified with the higher automorphic cohomology groups of certain line bundles on the compact quotient of the non-classical flag domain. In Section \ref{cup-product section}, we apply our results to study the cup products of automorphic cohomology groups on $ D $, with the target being a TDLDS. In Section \ref{examples}, we present examples from the classical groups $ SU(r,s) $ and $ Sp(2n,\C) $ to illustrate the results proved in the previous sections.


\section{Preliminaries}\label{Pre}
In this section, we introduce the basic notions of flag domains and fix the notations in this paper. Additionally we recall our recent work \cite{LiuShen24} on the new complex structures of non-classical flag domains, which is the foundation of this paper. References for this section are \cite{FHW}, \cite{Hel} and \cite{knapp}.

Let $G$ be a reductive $\mathbb Q$-algebraic group. Let $G_\R$ and $G_\C$ be the associated real and complex connected Lie groups respectively. 
A {flag variety} $\check D =G_\C/B$ is a complex homogenous manifold with $B\subset G_\C$ a Borel subgroup. A {flag domain} $D=G_\R/T$ is an open $G_\R$-orbit in $\check D$, where the isotropy subgroup $T=G_\R\cap B$ is a compact Cartan subgroup of $G_\R$.

Let $K$ be the maximal compact subgroup of $G_\R$ containing the compact Cartan subgroup $T$. The real Lie group $G_\R$ is said to be {of Hermitian type} if the Riemannian symmetric space $G_\R/K$ has a $G_\R$-invariant complex structure. The flag domain $D$ is called {classical} if $G_\R$ is of Hermitian type and the projection map
$$p:\, D\to G_\R/K$$
is a holomorphic map between complex manifolds. Otherwise, $D$ is called {non-classical}.

Hence non-classical flag domains include the cases that $G_\R$ is not of Hermitian type and that $G_\R$ is of Hermitian type but the projection map $p$ is not holomorphic.
In this paper, we mainly consider the non-classical flag domain $D$ with $G_\R$ of Hermitian type.

The complex structures of $D$ and $G_\R/K$ can be described by the Lie algebras of the Lie groups mentioned above.
For this, we fix some notations of the Lie algebras:
\begin{itemize}
  \item Let $\mf h_0\subset \mf k_0 \subset \mf g_0$ be the Lie algebras of $T\subset K\subset G_\R$ respectively;
  \item Let $\mf b\subset \mf g$ be the Lie algebras of $B\subset G_\C$ respectively. Then $\mf g =\mf g_0\otimes_\R \C$ is the complexification of $\mf g_0$ and $\mf h_0=\mf b\cap \mf g_0$;
  \item Let $\mf n_-\subset\mf g$ be the subalgebra such that 
        $$\mathrm{T}^{1,0}_o\check D=\mathrm{T}^{1,0}_o D=\mf g/\mf b\cong \mf n_-$$
        where $o$ is the base point in $D$. 
  \item Let $\mf h\subset \mf k$ be the complexications of $\mf h_0\subset \mf k_0$ respectively. Then $\mf h\subset \mf g$ is the Cartan subalgebra, and there exist subspaces $\mf p_0\subset \mf g_0$ and $\mf p=\mf p_0\otimes_\R \C \subset \mf g$ such that 
      \begin{eqnarray}
        &&\mf g_\#=\mf k_\# \oplus \mf p_\#,\, \text{with} \nonumber \\
        && [\mf k_\#,\mf k_\#]\subset \mf k_\#,\, [\mf k_\#,\mf p_\#]\subset \mf p_\#,\, [\mf p_\#,\mf p_\#]\subset \mf k_\# \label{Cartan relation}
      \end{eqnarray}
      are Cartan decompositions on $\mf g_0$ and $\mf g$, where $\#$ is $0$ or $\emptyset$;
      \item Let $\mf n_+=\bar{\mf n_-}$ and $\mf p_\pm =\mf p\cap \mf n_\pm$, $\mf k_\pm =\mf k\cap \mf n_\pm$.
\end{itemize}

We also denote by the complex Lie subgroups $T_\C \subset K_\C \subset G_\C$ corresponding to the complex Lie subalgebras $\mf h\subset \mf k\subset \mf g$.

Since $\mf h \subset \mf g$ is the Cartan subalgebra, there exists a decomposition 
\begin{equation}\label{CartandecompC}
  \mf g =\mf h \oplus \bigoplus_{\alpha \in \Delta}\mf g_\alpha, 
\end{equation}
where $\Delta\subset \sqrt{-1}\mf h_0^*$ is the root system of $\mf g$ with respect to $\mf h$, and
$$\mf g_\alpha =\{X\in \mf g:\, [H,X]=\alpha(H)X, \forall\, h\in \mf h\}$$ is the root space which is one-dimensional with basis $e_\alpha$ for any $\alpha\in \Delta$.

Note that $e_\alpha$, $\alpha\in \Delta$, can be considered as left-invariant vector fields on $G_\C$.
Let $\{\omega^\alpha:\, \alpha\in \Delta\}$ be the dual of $\{e_\alpha:\, \alpha\in \Delta\}$. Then $\omega^\alpha$, $\alpha\in \Delta$, can be considered as left-invariant differential one-forms on $G_\C$.

Now we fix some notations for the root systems:
\begin{itemize}
  \item Since $B\subset G_\C$ is Borel subgroup, there exists a set $\Delta_+$ of positive roots such that 
      $${\mf b} = \mf h \oplus \bigoplus_{\alpha \in \Delta_+}\mf g_{-\alpha},\, \mf n_-=\bigoplus_{\alpha \in \Delta_+}\mf g_{\alpha};$$
  \item Let $\Delta^c,\Delta^\nc \subset \Delta$ be the sets of compact roots and noncompact roots respectively such that
      $$\mf k=\mf h \oplus \bigoplus_{\alpha \in \Delta^{\mathrm{c}}}\mf g_\alpha,\, \mf p = \bigoplus_{\beta \in \Delta^{\mathrm{nc}}}\mf g_\beta .$$
  \item Let $\Delta_+^\c =\Delta_+\cap \Delta^c$ and $\Delta_+^\nc=\Delta_+\cap \Delta^\nc$. Then 
  $$\mf k_- =\bigoplus_{\alpha \in \Delta_+^{\mathrm{c}}}\mf g_\alpha,\, \mf p_-=\bigoplus_{\beta \in \Delta_+^{\mathrm{nc}}}\mf g_\beta.$$
\end{itemize}

With the above notations, we see that $\mf n_-$ or equivalently $\Delta_+$ determines the complex structure on $D$ via the integrable subbundle of the complexified differential tangent bundle,
$$\mathrm{T}^{1,0}D=G_\R\times_T \mf n_-\subset \mathrm{T}^{\C}D=G_\R\times_T \mf g/\mf h,$$
where the inclusion $\subset$ is given by
\begin{equation}\label{inclusion of tangent}
  \mf n_- \cong (\mf n_- \oplus \mf h)/\mf h \subset \mf g/\mf h.
\end{equation}
In the following, all the inclusions of integrable subbundles of the complex differential tangent bundles are given as \eqref{inclusion of tangent}.

Let $D=G_\R/T$ be a non-classical flag domain with $G_\R$ of Hermitian type, i.e. there exists a $G_\R$-invariant complex structure on the symmetric space $G_\R/K$. The $G_\R$-invariant complex structure on $G_\R/K$ is given by an abelian subspace $$\mf p_-'\subset \mf p,\, \mf p_-' \oplus \bar{\mf p_-'} = \mf p$$ such that
$$\mathrm{T}^{1,0}G_\R/K =G_\R\times_K \mf p'_-\subset \mathrm{T}^{\C}G_\R/K=G_\R\times_K \mf g/\mf k.$$

The following theorems are proved in \cite{LiuShen24}.

\begin{theorem}[Theorem 5.8 of \cite{LiuShen24}]\label{new complex D}
Let $D=G_\R/T$ be a non-classical flag domain, where $G_\R$ is of Hermitian type with an abelian subspace $\mf p_-'\subseteq \mf p$ such that $\mf p_-' \oplus \bar{\mf p_-'} = \mf p$. Then the flag domain $D$ admits a new $G_\R$-invariant complex structure, given by
\begin{equation}\label{new complex bundle}
  \T^{1,0} D' = G_\R\times_T \left( \mf k_-\oplus \mf p_-'\right)\subseteq \T^\C D
\end{equation}
where $D'$ denotes the differentiable manifold $D$ equipped with the new complex structure. Moreover, $D'\subset\check{D}$ is a classical flag domain with a holomorphic projection map
$$p':\, D'\to G_\R/K.$$
\end{theorem}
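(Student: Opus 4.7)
The plan has four main steps: construct the almost complex structure, verify integrability, realize $D'$ as an open $G_\R$-orbit in a complex flag variety, and check that the projection to $G_\R/K$ is holomorphic.

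For the almost complex structure, I would first observe that any $G_\R$-invariant complex structure on $G_\R/K$ comes from an $\Ad(K)$-invariant abelian subspace $\mf p'_-\subset \mf p$; such a subspace is in particular $\ad(\mf h)$-stable, hence a direct sum of noncompact root spaces, and therefore $\Ad(T)$-invariant. Combined with the obvious $\Ad(T)$-invariance of $\mf k_-$, this makes $G_\R\times_T(\mf k_-\oplus \mf p'_-)$ a well-defined $G_\R$-invariant subbundle of $\T^\C D$. The decomposition $\mf g=(\mf k_-\oplus \mf p'_-)\oplus (\mf k_+\oplus \bar{\mf p'_-})\oplus \mf h$ then shows this subbundle and its complex conjugate split $\T^\C D$, yielding an almost complex structure. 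For integrability, by Newlander--Nirenberg it suffices to check that $\mf q':=\mf h\oplus \mf k_-\oplus \mf p'_-$ is a Lie subalgebra of $\mf g$; and indeed $[\mf k_-,\mf k_-]\subset \mf k_-$ and $[\mf h,\mf q']\subset \mf q'$ are standard, $[\mf p'_-,\mf p'_-]=0$ is the abelian hypothesis, and $[\mf k_-,\mf p'_-]\subset \mf p'_-$ follows from $\Ad(K)$-invariance of $\mf p'_-$.

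To realize $D'$ as a flag domain, I would consider the conjugate candidate Borel subalgebra $\mf b':=\mf h\oplus \mf k_+\oplus \bar{\mf p'_-}$. Conjugating the bracket relations from Step 2 shows $\mf b'$ is a solvable subalgebra containing $\mf h$; the splitting $\mf g=\mf b'\oplus(\mf k_-\oplus \mf p'_-)$ together with the fact that the root system partitions into the roots of $\mf k_-\oplus \mf p'_-$ and their negatives identifies $\mf b'$ as a Borel subalgebra. Let $B'\subset G_\C$ be the corresponding Borel and $\check D':=G_\C/B'$. Since complex conjugation on $\mf g$ exchanges $\mf k_-\leftrightarrow \mf k_+$ and $\mf p'_-\leftrightarrow \bar{\mf p'_-}$, any self-conjugate element of $\mf b'$ must lie in $\mf h_0$, so $\mf b'\cap \mf g_0=\mf h_0$ and the $G_\R$-stabilizer of $eB'$ is $T$. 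A real-dimension count $\dim_\R \mf g_0/\mf h_0=2\dim_\C \mf g/\mf b'$ then forces the orbit $G_\R\cdot eB'$ to be open in $\check D'$, realizing $D'$ as a flag domain. The complex structure inherited from $\check D'$ matches that of Step 1 because both have $(1,0)$-tangent $\mf k_-\oplus \mf p'_-$ at the base point. Finally, the map $p'\colon D'\to G_\R/K$ coming from $T\subset K$ has differential at the base point equal to the projection $\mf g/\mf h\to \mf g/\mf k$, which sends $\mf k_-\oplus \mf p'_-$ onto $\mf p'_-$; the latter is by hypothesis the $(1,0)$-tangent of $G_\R/K$, so by $G_\R$-equivariance $p'$ is holomorphic.

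The step I expect to be most delicate is identifying $\mf b'$ as a Borel subalgebra in Step 3, which amounts to showing that the set of roots $\Delta'_+:=\Delta_+^\c\cup\{\gamma\in \Delta^\nc:\mf g_\gamma\subset \mf p'_-\}$ is a positive system. Closure under addition reduces to $\Ad(K)$-invariance of $\mf p'_-$ and the abelian hypothesis $[\mf p'_-,\mf p'_-]=0$, while the partition $\Delta=\Delta'_+\sqcup(-\Delta'_+)$ uses that $\Delta^\c$ and $\Delta^\nc$ are disjoint so no compact positive root can equal the negative of a noncompact root of $\bar{\mf p'_-}$. Once this verification is in place, the remaining identifications are bookkeeping.
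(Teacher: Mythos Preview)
The paper does not prove this theorem here; it is quoted from the authors' earlier work \cite{LiuShen24} (as Theorem~5.8 there) and stated in Section~\ref{Pre} without proof. So there is no in-paper argument to compare against.

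That said, your proposal is the standard route and is correct. The essential ingredients are exactly the ones you identify: $\Ad(K)$-invariance of $\mf p'_-$ gives $[\mf k_\pm,\mf p'_-]\subset\mf p'_-$, the abelian hypothesis gives $[\mf p'_-,\mf p'_-]=0$, and together with $[\mf k_-,\mf k_-]\subset\mf k_-$ these make $\mf h\oplus\mf k_-\oplus\mf p'_-$ (equivalently, its conjugate $\mf b'$) a subalgebra, so the almost complex structure is integrable. Your verification that $\Delta'_+=\Delta_+^\c\cup\{\gamma\in\Delta^\nc:\mf g_\gamma\subset\mf p'_-\}$ is a positive system is the right way to see that $\mf b'$ is Borel: closure under addition uses precisely the three bracket relations above, and the partition $\Delta=\Delta'_+\sqcup(-\Delta'_+)$ is forced by $\mf p=\mf p'_-\oplus\bar{\mf p'_-}$. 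The stabilizer computation $\mf b'\cap\mf g_0=\mf h_0$ and the real-dimension count then realize $D'$ as an open $G_\R$-orbit in $G_\C/B'$, and holomorphicity of $p'$ is immediate since the differential kills $\mf k_-$ and carries $\mf p'_-$ to the $(1,0)$-tangent of $G_\R/K$.

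One cosmetic remark: in Step~2 you check that $\mf q'=\mf h\oplus\mf k_-\oplus\mf p'_-$ is a subalgebra, whereas the Newlander--Nirenberg condition is literally that the $(0,1)$-side $\mf b'=\mf h\oplus\mf k_+\oplus\bar{\mf p'_-}$ be closed under brackets. Since the two are complex conjugates this is equivalent, but it may read more cleanly to state and verify the condition on $\mf b'$ directly, which also feeds straight into Step~3.
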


And furthermore we have 
\begin{theorem}[Theorem 6.3 of \cite{LiuShen24}]\label{Thm 6.3}
Let the assumption be as in Theorem \ref{new complex D} and let $\Gamma\subseteq G_\R$ be a co-compact and torsion-free discrete subgroup. 
Then we have two compact complex manifolds $X=\Gamma \backslash D$ and $X'=\Gamma \backslash D'$ which are diffeomorphic to each other, such that  $X$ is not in Fujiki class $\mathcal C$ while $X'$ is a projective manifold.
\end{theorem}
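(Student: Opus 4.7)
The theorem splits into three essentially independent parts: (i) $X$ and $X'$ are compact complex manifolds diffeomorphic to each other, (ii) $X'$ is projective, and (iii) $X\notin\mathcal C$. Part (i) is immediate from the hypotheses: $\Gamma$ is discrete, torsion-free, and co-compact in $G_\R$, and acts holomorphically on both $D$ and $D'$ because both complex structures are $G_\R$-invariant by Theorem \ref{new complex D}. Since $D$ and $D'$ share the same underlying smooth manifold, the identity descends to a diffeomorphism $X\to X'$.

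For (ii), the plan is to exploit the holomorphic submersion $p':D'\to G_\R/K$ of Theorem \ref{new complex D}, which descends to a holomorphic fibre bundle $X'\to\Gamma\backslash(G_\R/K)$. The base is smooth projective by Baily--Borel (since $G_\R/K$ is a bounded symmetric domain), and the fibres are copies of the projective flag variety $K_\C/B_K\cong K/T$, with $B_K\subset K_\C$ the Borel containing $T_\C$. An ample line bundle on $X'$ is then assembled by combining a high power of the pull-back of an ample bundle on the base with a line bundle that is relatively ample along the fibres (coming from a regular dominant $K$-weight); Kleiman's criterion concludes.

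Part (iii) is the substantive assertion and I would prove it by contradiction. Assume $X\in\mathcal C$; then by Varouchas' theorem $X$ satisfies the $\partial\bar\partial$-lemma, which forces the classical Hodge symmetry $h^{p,q}(X)=h^{q,p}(X)$ together with the Hodge decomposition $H^n(X,\C)=\bigoplus_{p+q=n}H^q(X,\Omega^p_X)$. Now $\Omega^1_X$ is the homogeneous bundle associated to the $T$-representation $\mf n_-^*=\bigoplus_{\alpha\in\Delta_+}\mf g_{-\alpha}$, so $\Omega^p_X$ splits into homogeneous line bundles $L_\mu$ indexed by negative sums of $p$ positive roots, each nonzero for $p\ge 1$. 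The Green--Griffiths--Kerr vanishing proved in \cite{LiuShen24} yields $h^{p,0}(X)=0$ for all $p\ge 1$, and Hodge symmetry propagates this to $h^{0,p}(X)=0$ for the same range. The contradiction then comes from comparing Betti numbers: $b_n(X)=b_n(X')$ since the two quotients are diffeomorphic, and for the K\"ahler manifold $X'$ one has $b_n(X')=\sum_{p+q=n}h^{p,q}(X')$, while the Hodge decomposition postulated on $X$ would give $b_n(X)=\sum_{p+q=n}h^{p,q}(X)$; each $h^{p,q}(X)$ can in turn be computed as a sum of $\dim H^q(X,L_\mu)$ via Williams' Theorem 2.4. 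The interchange of $\Delta_+^\nca$ with $-\Delta_+^\nca$ between the positive systems of $D$ and $D'$ shifts the degree of cohomological concentration for the relevant line bundles, so the resulting sums must disagree in at least one bidegree.

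The main obstacle is this last calculational step: identifying a single bidegree $(p,q)$ with $p+q=n$ at which the hypothetical Dolbeault number of $X$ forced by Hodge symmetry and by the vanishing $h^{p,0}=h^{0,p}=0$ fails to match what Williams' formula actually computes on the non-classical side. Concretely, this reduces to tracking how the roots in $\Delta_+^\nca$ contribute to the Weyl chambers of $\mu+\rho$ for each summand $L_\mu$ of $\Omega^p_X$, verifying Property \textbf{W} of Williams throughout, and exhibiting at least one line bundle whose cohomological concentration degree is strictly shifted by the non-classical structure. Once this explicit Weyl-chamber bookkeeping is carried out the contradiction is immediate, but it is where the genuine technical content of (iii) lies.
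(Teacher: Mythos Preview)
The paper does not actually prove this theorem; it is quoted from \cite{LiuShen24} and accompanied only by a one-sentence sketch: the Green--Griffiths--Kerr vanishing conjecture (zeroth automorphic cohomology of every nontrivial homogeneous vector bundle on $X$ vanishes) is established in \cite{LiuShen24}, and from that one deduces that $X$ carries no $\partial\bar\partial$-structure. Projectivity of $X'$ is attributed to \cite{GS}. So there is very little to compare against in this paper itself.

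Your parts (i) and (ii) are fine in spirit. One small correction: Baily--Borel is a statement about arithmetic quotients and their compactifications; here $\Gamma$ is merely co-compact and torsion-free, so the base $\Gamma\backslash(G_\R/K)$ is already compact and its projectivity follows from Kodaira embedding (the Bergman metric has negative holomorphic sectional curvature, hence ample canonical bundle). Your relative-ampleness argument for the total space is reasonable, and in any case the paper simply cites \cite{GS}.

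Part (iii) has a genuine gap, and it is the one you yourself flag. You correctly reach $h^{p,0}(X)=0$ for $p\ge 1$ from the GGK vanishing, and (under the hypothetical $\partial\bar\partial$-lemma) $h^{0,p}(X)=0$. But your proposed endgame --- matching Betti numbers of $X$ and $X'$ by computing each $h^{p,q}(X)$ as a sum of $\dim H^q(X,L_\mu)$ via Williams' Theorem \ref{W thm} and then detecting a degree shift coming from $\Delta_+^{\nca}$ --- is not a workable plan. Williams' theorem requires $\mu+\rho$ to be \emph{regular} and to satisfy Property~\textbf{W}; the weights $\mu$ appearing as summands of $\wedge^p\mf n_-^*$ are arbitrary negative sums of positive roots and will generically fail both hypotheses, so you cannot invoke the theorem termwise. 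Even where it applies, the comparison you envisage would require controlling \emph{all} interior Hodge numbers $h^{i,j}(X)$ with $i,j\ge 1$, not just the edge ones, and nothing in your outline does that. The passage from ``$h^{p,0}(X)=h^{0,p}(X)=0$'' to a contradiction is the whole content of (iii), and it is not supplied. The sketch in the present paper suggests that \cite{LiuShen24} obtains the contradiction with the $\partial\bar\partial$-lemma more directly from the vanishing statement, without routing through a bundle-by-bundle comparison of automorphic cohomology on $X$ and $X'$; you should consult that reference for the missing mechanism rather than pursue the Williams route.
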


In fact, in \cite{LiuShen24}, we proved that $X=\Gamma \backslash D$ admits no  $\partial\bar{\partial}$-structure compactible with the complex structure induced from $D$, by solving a conjecture of Green--Griffiths--Kerr about the vanishing theorems of the zeroth automorphic cohomology of any non-trivial locally homogenous vector bundles on $X$. 

In \cite{CT}, Carlson and Toledo proved that there exists no K\"ahler metrics on $X$. 
In \cite{GRT}, Griffiths, Robles and Toledo proved that the quotient $X=\Gamma \backslash D$, not necessarily compact or smooth, is not algebraic. Hence Theorem \ref{Thm 6.3} can be considered as a generalization of their results.

On the other hand, it is proved in \cite{GS} that the compact quotient $X'$ of the classical flag domain $D'$ is a projective manifold.
Thus, it is of interest to endow the automorphic cohomology groups on $X$ with arithmetic structures inherited from those on $X'$ via the Penrose transformation discussed below.

Let us denote $\mf n_-'=\mf k_-\oplus \mf p_-'$ and the set of positive roots corresponding to the complex structure of $D'$ by $\Delta_+'$, i.e. 
$$\Delta_+'=\{\alpha\in \Delta:\, e_\alpha \in \mf n_-'\}.$$

Since $D$ is non-classical and $D'$ is classical, the complex structures given by $\mf n_-$ and $\mf n'_-$ are different.
Also note that $\mf n_-$ and $\mf n_-'$ only differ in $\mf p_-$ and $\mf p_-'$ and $$\mf p_- \oplus {\mf p_+} = \mf p_-' \oplus {\mf p_+'} =\mf p.$$ 
Hence if we let $\mf p_-^2=\mf p_-\cap \mf p'_-$ and $\mf p_-^1\subset \mf p_-$ such that $\mf p_-=\mf p_-^1\oplus \mf p_-^2$, then we have 
$$\mf p'_-=\mf p^1_+\oplus \mf p_-^2.$$
In this paper, we write $\mf s_+$ with subscript "$+$" for the complex conjugate of the subspace $\mf s_-$ with subscript "$-$" and vice versa.

In terms of roots, we have the union
$$\Delta_+^\nc = \Delta_+^{\nca}\cup \Delta_+^{\ncb}$$
where 
$$\Delta_+^{\nc,i}=\{\beta:\,e_\beta\in \mf p_-^{i}\},\, i=1,2.$$
For $\Delta_+'$, we have the union
$$\Delta_+'={\Delta'}_+^\c\cup {\Delta'}_+^\nc,\, \text{where }{\Delta'}_+^\c={\Delta}_+^\c,\, {\Delta'}_+^\nc = \left(-\Delta_+^{\nca}\right)\cup \Delta_+^{\ncb}.$$

\begin{lemma}\label{Delta' positive}
For any $\beta,\, \beta'\in {\Delta'}_+^\nc = \left(-\Delta_+^{\nca}\right)\cup \Delta_+^{\ncb}$, we have that $(\beta,\beta')\ge 0$.
\end{lemma}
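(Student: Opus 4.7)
The plan is to exploit the abelian property of the subspace $\mf p_-'\subset \mf p$, which comes for free from the hypothesis that $G_\R$ is of Hermitian type: the classical decomposition $\mf p_-'\oplus \bar{\mf p_-'}=\mf p$ with $\mf p_-'$ abelian is exactly what encodes the $G_\R$-invariant complex structure on $G_\R/K$. In our setup this abelian subspace takes the form $\mf p_-'=\mf p_+^1\oplus \mf p_-^2$, so unpacking the definitions gives the identification
$$\mf p_-'=\bigoplus_{\beta\in {\Delta'}_+^\nc}\mf g_\beta,$$
where the root spaces in $\mf p_+^1$ correspond to $-\Delta_+^\nca$ and those in $\mf p_-^2$ correspond to $\Delta_+^\ncb$.

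Next, I would fix arbitrary $\beta,\beta'\in {\Delta'}_+^\nc$ and pick root vectors $e_\beta,e_{\beta'}\in \mf p_-'$. The abelian condition $[\mf p_-',\mf p_-']=0$ forces $[e_\beta,e_{\beta'}]=0$. Combined with the standard root-bracket formula $[e_\beta,e_{\beta'}]=N_{\beta,\beta'}\,e_{\beta+\beta'}$, in which $N_{\beta,\beta'}\ne 0$ whenever $\beta+\beta'\in \Delta$, and with the observation that $\beta+\beta'\ne 0$ (both elements belong to the common positive system ${\Delta'}_+$ attached to $D'$), this forces $\beta+\beta'\notin \Delta$.

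Finally, I would invoke the classical root-system fact, a direct consequence of the $\mf{sl}_2$-triple analysis of root strings: if two roots $\alpha,\gamma$ with $\alpha+\gamma\ne 0$ satisfy $(\alpha,\gamma)<0$, then $\alpha+\gamma$ is automatically a root. Applied contrapositively to $\beta,\beta'$, this yields $(\beta,\beta')\ge 0$, which is exactly the claim. I do not anticipate any real obstacle --- the lemma is essentially the well-known statement that the noncompact positive roots of a Hermitian symmetric pair are pairwise non-acute in their shared positive system, a cornerstone of the Harish-Chandra embedding theory; the only care required is to verify that the sign flips built into $-\Delta_+^\nca$ correctly match ${\Delta'}_+^\nc$ with the root spaces of $\mf p_-'$, which is immediate from the definitions.
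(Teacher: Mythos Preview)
Your proposal is correct and follows essentially the same approach as the paper: both arguments hinge on the abelian property of $\mf p'_-$ together with the standard root-string fact that $(\beta,\beta')<0$ forces $\beta+\beta'$ to be a root, which would make $[e_\beta,e_{\beta'}]\neq 0$. The paper phrases this as a direct contradiction argument and also notes in passing that $\beta+\beta'$ would land in $\Delta^\c$, but this extra observation is not needed for the conclusion.
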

\begin{proof}
Otherwise we have that $(\beta,\beta')< 0$, then $\beta+\beta' $ is a weight in $\Delta^\c$, which implies that 
$$[e_{\beta},e_{\beta'}]=N_{\beta,\beta'}e_{\beta+\beta'} \neq 0$$
for $e_\beta,,e_{\beta'} \in \mf p'_-$, which contradicts to that $\mf p'_-$ is abelian.
\end{proof}

Before closing this section, we introduce the following lemma for the proof of Lemma \ref{structure constant} below.

\begin{lemma}\label{relations of the Lie brackets}
Let the notations be as above. Then we have the following relations of the Lie brackets of the subspaces
\begin{eqnarray}
  &&[\mf k_-,\mf k_-] \subset \mf k_-; \label{kkk}\\
   &&[\mf k_-,\mf p_-^i] \subset \mf p_-^i,\, i=1,2; \label{kpipi}\\
  &&[\mf p_-^1,\mf p_-^2] \subset [\mf p_-,\mf p_-]\subset \mf k_-; \label{p1p2}\\
  && [\mf p_-^1,\mf p_+^2]=0 ,\,[\mf p_-^i,\mf p_-^i] =0,\, i=1,2.\label{pipi}
\end{eqnarray}
The above relations also hold if we reverse the signs.
\end{lemma}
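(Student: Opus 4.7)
My plan is to translate each bracket relation into a root-theoretic statement via the root-space decomposition, and to combine two ingredients: the Cartan relations \eqref{Cartan relation} on the pair $(\mf k,\mf p)$, and the fact that both $\mf p_-' = \mf p_+^1 \oplus \mf p_-^2$ and its complex conjugate $\mf p_+'=\mf p_-^1\oplus\mf p_+^2$ are abelian subalgebras of $\mf g$; this abelian-ness is exactly the Hermitian symmetric structure on $G_\R/K$, equivalent at the root level to Lemma \ref{Delta' positive}.

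First I would dispose of the vanishing relations \eqref{pipi}. Since $\mf p_-^1 \subset \mf p_+'$ and $\mf p_+^2 \subset \mf p_+'$, the abelian-ness of $\mf p_+'$ yields $[\mf p_-^1,\mf p_+^2]=0$ and $[\mf p_-^1,\mf p_-^1]=0$ in one step; likewise $[\mf p_-^2,\mf p_-^2]=0$ follows from $\mf p_-^2\subset\mf p_-'$. Next I would handle \eqref{kkk} and \eqref{p1p2} by the observation that for any $\alpha,\beta\in\Delta_+$, the bracket $[e_\alpha,e_\beta]$ is either zero or a nonzero multiple of $e_{\alpha+\beta}$ with $\alpha+\beta\in\Delta_+$ (a sum of two positive roots, when it is a root, is positive), so the bracket lies in $\mf n_-$; combining with the Cartan relations $[\mf k,\mf k]\subset\mf k$ and $[\mf p,\mf p]\subset\mf k$, one lands in $\mf n_-\cap\mf k=\mf k_-$, which is exactly what is wanted.

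The step I expect to be the main obstacle is \eqref{kpipi}, namely that $[\mf k_-,\mf p_-^i]$ preserves the refined decomposition $\mf p = \mf p_+^1\oplus\mf p_-^1\oplus\mf p_+^2\oplus\mf p_-^2$; a priori the Cartan relation only places the bracket in $\mf p$. My plan is to intersect two different $K_\C$-invariant decompositions. Since $\mf p_-^1\subset \mf p_+'$ and $\mf p_+'$ is stable under $K_\C$ in the Hermitian decomposition $\mf g=\mf k\oplus\mf p_+'\oplus\mf p_-'$, one has $[\mf k,\mf p_-^1]\subset \mf p_+'=\mf p_-^1\oplus\mf p_+^2$. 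On the other hand, for $\alpha\in\Delta_+^\c$ and $\beta\in\Delta_+^\nca$, the bracket $[e_\alpha,e_\beta]$ again lies in $\mf n_-$ by the positivity observation above. Intersecting,
\begin{equation*}
[e_\alpha,e_\beta]\in (\mf p_-^1\oplus\mf p_+^2)\cap\mf n_- = (\mf p_-^1\oplus\mf p_+^2)\cap(\mf k_-\oplus\mf p_-^1\oplus\mf p_-^2) = \mf p_-^1,
\end{equation*}
as needed. The case $i=2$ is entirely symmetric, using $\mf p_-^2\subset\mf p_-'$ and $[\mf k,\mf p_-']\subset\mf p_-'$.

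Finally, the claim that every listed relation remains valid after reversing all of the $\pm$ signs follows by applying complex conjugation, which exchanges $\mf k_\pm$ with $\mf k_\mp$ and $\mf p_\pm^i$ with $\mf p_\mp^i$ and is compatible with the bracket.
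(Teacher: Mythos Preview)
Your proposal is correct and follows essentially the same approach as the paper's proof: both use the Cartan relations together with the fact that $\mf n_-$ is a subalgebra for \eqref{kkk} and \eqref{p1p2}, the abelian-ness of $\mf p'_\pm$ for \eqref{pipi}, and the intersection $\mf p'_+\cap\mf p_-=\mf p_-^1$ (equivalently $\mf p'_+\cap\mf n_-=\mf p_-^1$) for \eqref{kpipi}. The only cosmetic difference is that the paper phrases the key step for \eqref{kpipi} as $[\mf k_-,\mf p_-^1]\subset[\mf k_-,\mf p_-]\cap[\mf k_-,\mf p'_+]\subset\mf p_-\cap\mf p'_+$, whereas you phrase the first containment via positivity of sums of positive roots and land in $\mf n_-$; these are the same argument.
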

\begin{proof}
\eqref{kkk} and \ref{p1p2} follow directly from the properties of Cartan decomposition in \ref{Cartan relation} and that $\mf n_-$ is a subalgebra.

\eqref{pipi} follows from that $$[\mf p_-^1,\mf p_+^2],\,[\mf p_-^1,\mf p_-^1]\subset [\mf p'_+,\mf p'_+]=0$$ and that
$$[\mf p_-^2,\mf p_-^2]\subset [\mf p'_-,\mf p'_-]=0.$$ 

Note that $\mf p'_-$ is preserved by $\mf k_\pm$ while $\mf p_-$ is only preserved by $\mf k_-$. Hence $$[\mf k_-,\mf p_-^1]\subset [\mf k_-,\mf p_-]\subset \mf p_- ,\, \, [\mf k_-,\mf p_-^1]\subset [\mf k_-,\mf p'_+]\subset \mf p'_+$$
which implies that 
$$[\mf k_-,\mf p_-^1]\subset \mf p_-\cap  \mf p'_+ =\mf p_-^1.$$
This proves \eqref{kpipi} with $i=1$. A similar argument implies \eqref{kpipi} with $i=2$.
\end{proof}
\begin{remark}\label{minimal p1}
 \textnormal{Lemma \ref{relations of the Lie brackets} has the counterpart relations of the corresponding subsets of roots. For example, \eqref{pipi} implies that for any $\beta,\, \beta' \in \Delta_+^{\nc ,i}$, the sum $\beta+\beta'$ is not a root, where $i=1,2$.}
\end{remark}


\section{Correspondence spaces}\label{Corres}
In this section, we introduce the correspondence space $\mathscr W$ for the flag domains $D$ and $D'$. We relate the cohomology groups of the homogenous line bundles on $D$ and $D'$ to the de Rham cohomology groups on $\mathscr W$, which is the foundation of the Penrose transformation. We also introduce the incidence variety $\mathscr I$ and prove the vanishing theorem on certain open subsets $\mathscr I^o$ of $\mathscr I$, which is crucial to the proof of the injectivity of the Penrose transformation.

Let $$D=G_\R/T\subset \check D =G_\C/B$$ be a non-classical flag domain with $G_\R$ of Hermitian type. Then we have a natural projection map
\begin{equation}\label{DtoGK 2}
  p:\, D=G_{\mathbb R}/V\to  G_{\mathbb R}/K.
\end{equation}
Let $o\in D$ be a base point and $$\bar{o}=p(o)\in G_\R/K.$$
The fiber 
\begin{equation}\label{base cycle}
  Z_o\triangleq p^{-1}(\bar o) \simeq K/V \simeq K_\C/(K_\C\cap B)
\end{equation}
is a flag manifold which is an analytic subvariety of $D$. We call $Z_o$ the base cycle for $D$.

We define the cycle space $\mathscr U$ by
\begin{equation}\label{defn of U}
  \mathscr U =\text{topological component of }Z_o \text{ in }\{gZ_o:\, g\in G_\C, gZ_o\subset D\}.
\end{equation}

Note that for the classical flag domain $D'$, $Z_o$ is also an analytic subvariety, while the cycle space $\mathscr U'$ for $D'$ is trivial in the sense that any $g\in G_\C$ such that $gZ_o\subset D'$ is an analytic subvariety if and only if $g\in G_\R$.

We collect some properties of the cycle space $\mathscr U$ as introduced in Lecture 6 of \cite{GGK}.

\begin{theorem}\label{cycle basics}
(1) The cycle space $\mathscr U$ is Stein.

(2) If $D$ is non-classical, then there is an open embedding $$\mathscr U \subset G_\C/K_\C\triangleq \check{\mathscr U}$$ 
where $K_\C$ is the closed subgroup of $G_\C$ corresponding to $\mf k \subset \mf g$.

(3) If $D$ is non-classical with $G_\R$ of Hermitian type, then there is a biholomorphism
$$\mathscr U \cong \mathbb B \times \bar{\mathbb B}$$
where $\mathbb B$ denotes the Hermitian symmetric space $G_\R/K$ and $\bar{\mathbb B}$ denotes the complex conjugate of $\mathbb B$.
\end{theorem}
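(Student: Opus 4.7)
The plan is to prove parts (2) and (3) first; part (1) will then follow immediately from (3), since a bounded symmetric domain $\mathbb B$ is biholomorphic via the Harish-Chandra embedding to a bounded domain of holomorphy in $\mf p_+$, hence Stein, and a product of Stein manifolds is Stein.

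For (2), I would first identify the $G_\C$-stabilizer of the base cycle $Z_o \cong K_\C/(K_\C\cap B)$ in $\check D$. Clearly $K_\C$ preserves $Z_o$. Conversely, any $g\in G_\C$ fixing $Z_o$ setwise preserves its holomorphic tangent space at $o$, namely $\mf k_-\subset \mf n_-$, together with its complex conjugate $\mf k_+$ and the Cartan $\mf h$; hence $g$ normalizes $\mf k$. Using that $K_\C$ is connected and, for $D$ non-classical, that the normalizer of $\mf k$ in $G_\C$ equals $K_\C$ itself, one concludes the stabilizer is exactly $K_\C$. This gives a $G_\C$-equivariant holomorphic injection $G_\C/K_\C \hookrightarrow \check{\mathscr U}$; by the definition in \eqref{defn of U}, $\mathscr U$ is then the connected component of $Z_o$ in the open subset $\{gK_\C \in G_\C/K_\C : gZ_o \subset D\}$.

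For (3), the key tool is the pair of opposite parabolic subgroups $P_\pm = K_\C\cdot \exp(\mf p_\pm)$ of $G_\C$, built from the Harish-Chandra decomposition $\mf g = \mf p_+ \oplus \mf k \oplus \mf p_-$. The homogeneous spaces $G_\C/P_-$ and $G_\C/P_+$ are the compact duals $\check{\mathbb B}$ and $\check{\bar{\mathbb B}}$, containing the bounded symmetric domains $\mathbb B$ and $\bar{\mathbb B}$ as open $G_\R$-orbits via the Borel embedding. Since $P_+\cap P_- = K_\C$, the diagonal map
$$\Phi:\, G_\C/K_\C\longrightarrow \check{\mathbb B}\times \check{\bar{\mathbb B}},\qquad gK_\C\mapsto (gP_-,\,gP_+)$$
is a $G_\C$-equivariant holomorphic injection. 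The dimension count $\dim_\C G_\C/K_\C = \dim \mf p_+ + \dim \mf p_- = \dim \check{\mathbb B} + \dim \check{\bar{\mathbb B}}$ then forces $\Phi$ to be an open embedding. I would then verify that $\Phi(\mathscr U) = \mathbb B\times \bar{\mathbb B}$ by matching the cycle condition $gZ_o \subset D$ against the two conditions $gP_-\in \mathbb B$ and $gP_+\in \bar{\mathbb B}$.

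The main obstacle I anticipate is this last step: proving $\Phi(\mathscr U) = \mathbb B\times \bar{\mathbb B}$ globally, rather than merely on a neighbourhood of the base cycle. A feasible route is to exploit the $G_\R\times G_\R$-action via the two Harish-Chandra realizations, which is transitive on $\mathbb B\times \bar{\mathbb B}$, together with the Matsuki--Wolf duality between $G_\R$-orbits and $K_\C$-orbits on $\check D$, in order to propagate the containment $gZ_o\subset D$ from a neighbourhood of the identity to the full preimage $\Phi^{-1}(\mathbb B\times \bar{\mathbb B})$. Once this is in hand, $\mathscr U\cong \mathbb B\times \bar{\mathbb B}$ is established and (1) follows from the Stein product remark above.
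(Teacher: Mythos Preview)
The paper does not give its own proof of this theorem: immediately after the statement it simply writes ``Please see \cite{FHW} for the proofs of (1) and (2), and see \cite{BHH} or \cite{FHW} for the proof of (3).'' So there is no in-paper argument to compare against; the result is quoted as background from the literature.

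That said, a few remarks on your sketch. Your plan to deduce (1) from (3) only covers the case where $G_\R$ is of Hermitian type, whereas (1) is stated without that hypothesis; the Steinness of $\mathscr U$ in full generality (as proved in \cite{FHW}) requires a separate argument, typically via the Schubert domain or Iwasawa envelope description. In your sketch of (2), the sentence ``any $g\in G_\C$ fixing $Z_o$ setwise preserves its holomorphic tangent space at $o$'' is not justified as written, since $g$ need not fix the base point $o$; the standard fix is to first translate by some $k\in K_\C$ so that $gk$ fixes $o$, and then argue inside $B$. Your outline for (3) correctly identifies the diagonal open embedding $G_\C/K_\C\hookrightarrow \check{\mathbb B}\times\check{\bar{\mathbb B}}$ and the genuine difficulty, namely the global equality $\Phi(\mathscr U)=\mathbb B\times\bar{\mathbb B}$; this is exactly the content of the theorem in \cite{BHH} (and \cite{FHW}), and the proof there is more delicate than your Matsuki--Wolf suggestion --- it uses adapted complex structures on Grauert tubes and a convexity/exhaustion argument rather than orbit duality alone.
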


Please see \cite{FHW} for the proofs of (1) and (2), and see \cite{BHH} or \cite{FHW} for the proof of (3).

We define some closed subgroups of the complex Lie group $G_\C$ as follows:
\begin{eqnarray*}
  T_\C &=&\text{complexification of $T$ with Lie algebra } \mf h; \\
  B_K &=& K_\C\cap B \text{ with Lie algebra } \mf b_K=\mf h\oplus \sum_{\alpha\in \Delta_+^\c}\mf g_{-\alpha},
\end{eqnarray*}
and define the corresponding complex homogenous spaces by
\begin{eqnarray*}
  \check{\mathscr W} &=& G_\C/T_\C;\\
  \check{\mathscr I} &=& G_\C /B_K.
\end{eqnarray*}
Since $T_\C \subset B_K\subset  B (\text{or } B')$ and $B_K\subset K_\C$, we have the following commutative diagram of projection maps of complex homogenous spaces
\begin{equation}\label{comm1}
\xymatrix{
&\check{\mathscr W}\ar[d]\ar[ldd]\ar[rdd]&\\
&\check{\mathscr I}\ar[ld]\ar[rd]&\\
\check{D}& &\check{\mathscr U}.
}
\end{equation}
Recall that the open embedding $D\subset \check D$ induces the open embedding $\mathscr U\subset \check{\mathscr U}$ in \eqref{defn of U}. We can also define the open subsets $\mathscr W \subset \check{\mathscr W}$ and $\mathscr I \subset \check{\mathscr I}$, such that each square box of the following diagram is Cartesian,
\begin{equation}\label{defn of WIJ}
\xymatrix{
\mathscr W\ar@{^{(}->}[r]\ar[d]&\check{\mathscr W}\ar[d]\\
\mathscr I\ar@{^{(}->}[r]\ar[d]&\check{\mathscr I}\ar[d]\\
 D\ar@{^{(}->}[r]&\check{\mathscr D}.
}
\end{equation}
Then the projection maps in \eqref{comm1} restrict to the following fibrations
\begin{equation}\label{comm2}
\xymatrix{
&{\mathscr W}\ar[d]^-{\pi_\mathscr{I}}\ar[ldd]_-{\pi}\ar[rdd]&\\
&{\mathscr I}\ar[ld]^-{\pi_D}\ar[rd]_-{\pi_\mathscr{U}}&\\
{D}& &{\mathscr U}
}
\end{equation}
We adopt the notations for fibrations from \cite{GGK} to facilitate reference, allowing readers to consult further details as needed.

\begin{definition}
The space $\check{\mathscr W}$ is referred to as the enhanced flag variety,  with $\mathscr W$ the correspondence space, and $\mathscr I$ is called the incidence variety.
\end{definition}

For $u\in \mathscr U$, we denote the corresponding analytic subvariety by $Z_u\subset D$. The incidence variety $\mathscr I$ can also be defined by
$$\mathscr I= \{(x,u)\in D\times \mathscr U:\, x\in Z_u\}.$$
Then the projection maps $\pi_D$ and $\pi_{\mathscr U}$ are the restrictions to $\mathscr I$ of the projections map $\mathrm{pr}_{1}:\,D\times \mathscr U \to D$ and $\mathrm{pr}_{2}:\,D\times \mathscr U \to  \mathscr U$ respectively.

\begin{proposition}
The space $\check{\mathscr W}$ is an affine algebraic variety and hence a Stein manifold; The open subset $\mathscr W$ of $\check{\mathscr W}$ is also Stein.
\end{proposition}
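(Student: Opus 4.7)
For the first assertion, the plan is to apply Matsushima's affineness theorem: since $G_\C$ is a complex reductive algebraic group and $T_\C$ is a reductive closed subgroup (being a complex torus), the homogeneous space $\check{\mathscr W}=G_\C/T_\C$ is a smooth affine algebraic variety, and is therefore automatically a Stein manifold.

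For the Steinness of $\mathscr W$, the plan is to exhibit $\mathscr W$ as the total space of a homogeneous holomorphic fiber bundle over the Stein base $\mathscr U$. The Cartesian squares in diagram \eqref{defn of WIJ} combine, by transitivity of fibered products, to give $\mathscr W=\check{\mathscr W}\times_{\check D}D$. Next I will identify $\mathscr I$ with the preimage of $\mathscr U$ under $\check{\mathscr I}\to\check{\mathscr U}$: any point $gB_K\in\check{\mathscr I}$ automatically satisfies the incidence relation $gB\in gZ_o$ (via the embedding $gK_\C/B_K\hookrightarrow\check D$, $gkB_K\mapsto gkB$), and the hypothesis $gK_\C\in\mathscr U$ forces $gZ_o\subset D$, so $(gB,gK_\C)\in D\times\mathscr U$; the converse inclusion is immediate from the definition of $\mathscr I$. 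Combining these fiber-product descriptions gives $\mathscr W=\check{\mathscr W}\times_{\check{\mathscr U}}\mathscr U$, so the projection $\mathscr W\to\mathscr U$ is a homogeneous holomorphic fiber bundle whose typical fiber is $K_\C/T_\C$. A second application of Matsushima's theorem, now to the reductive pair $(K_\C,T_\C)$, shows that this fiber is itself an affine variety, hence Stein.

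With $\mathscr W$ realized as such a bundle (Stein fiber, Stein base), Steinness of the total space follows from the classical Matsushima--Morimoto type result that a holomorphic fiber bundle over a Stein base, associated to a principal bundle under a complex reductive Lie group with Stein fiber, has Stein total space; the structure group here is $K_\C$. The main point that needs careful verification is the identity $\mathscr I=\check{\mathscr I}\times_{\check{\mathscr U}}\mathscr U$, which is what converts the given definition of $\mathscr W$ (as the preimage of $D\subset\check D$) into the bundle description over $\mathscr U$ needed to apply the Stein bundle theorem; everything else in the argument is a standard application of Matsushima's criterion.
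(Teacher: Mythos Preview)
Your argument for the affineness of $\check{\mathscr W}$ is essentially identical to the paper's: both invoke the fact that the quotient of a reductive complex algebraic group by a reductive closed subgroup is affine.

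For the Steinness of $\mathscr W$, you and the paper both arrive at the key identification $\mathscr W=\check{\mathscr W}\times_{\check{\mathscr U}}\mathscr U$, but you then diverge in how you conclude. The paper simply observes that this fiber product sits as a \emph{closed} complex submanifold of the product $\mathscr U\times\check{\mathscr W}$; since $\mathscr U$ is Stein (Theorem~\ref{cycle basics}) and $\check{\mathscr W}$ was just shown to be Stein, the product is Stein, and any closed complex submanifold of a Stein manifold is Stein. This is entirely elementary once the fiber-product description is in hand. Your route instead interprets $\mathscr W\to\mathscr U$ as a homogeneous fiber bundle with Stein fiber $K_\C/T_\C$ and reductive structure group $K_\C$, and then appeals to a Matsushima--Morimoto type theorem on Steinness of total spaces. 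That is correct, but it brings in a heavier result than necessary; the paper's closed-submanifold argument is one line and needs nothing beyond the definition of a fiber product. Your more careful justification of $\mathscr I=\check{\mathscr I}\times_{\check{\mathscr U}}\mathscr U$ (which the paper simply asserts) is a genuine contribution, though note that your ``converse inclusion is immediate'' implicitly relies on the alternative description $\mathscr I=\{(x,u)\in D\times\mathscr U:x\in Z_u\}$ rather than the Cartesian-square definition over $D$; you might make explicit which characterization you are invoking there.
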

\begin{proof}
Since all the Lie groups considered in this paper are linear algebraic groups, the complex Lie groups $G_\C$ and $T_\C$, which are the complexificaions of the compact real Lie groups, are reductive.
Therefore the quotient $\check{\mathscr W} = G_\C/T_\C$ is an affine algebraic variety, and hence Stein. 

The open subset ${\mathscr W}\subset \check{\mathscr W}$ can also be defined as the fiber product $\mathscr U\times_{\check{\mathscr U}} \check{\mathscr{W}}$, which is a closed subset of $\mathscr U\times \check{\mathscr{W}}$. From Theorem \ref{cycle basics}, the product $\mathscr U\times \check{\mathscr{W}}$ of Stein manifolds is also Stein, and so is its closed submanifold ${\mathscr W}$.
\end{proof}

We also collect some basic facts about the fibers of the fibrations in \eqref{comm2}. Please see Lecture 7 in \cite{GGK} for details.

\begin{proposition}
(1) The fibers of $\pi_D:\, \mathscr{U} \to D$ and $\pi_\mathscr{I}:\, \mathscr{W}\to \mathscr{I}$ are isomorphic to the typical ones (i.e. fiber of the base point) $B/B_K\cong \mf p_+$ and $B_K/H \cong \mf k_+$ respectively which are contractible affine algebraic varieties.

(2) The fibers of $\pi_\mathscr{U}:\, \mathscr{I}\to \mathscr{U}$ are isomorphic to the typical one $Z_o\simeq K_\C/B_K$ which is a projective variety;

(3) The fibers of $\pi_\mathscr{I}:\, \mathscr{W}\to \mathscr{I}$ are isomorphic to the typical one $K_\C/T_\C$ which is an affine algebraic variety. 
\end{proposition}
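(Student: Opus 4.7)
The strategy is to reduce every fiber computation to identifying a fiber of a homogeneous-space map, exploiting the Cartesian squares of \eqref{defn of WIJ}. Since those squares are Cartesian, the fiber of any restricted map in \eqref{comm2} over a point coincides with the fiber of the corresponding enveloping homogeneous-space map in \eqref{comm1} over the same point. Each enveloping map is $G_\C$-equivariant, so all of its fibers are isomorphic to the typical one, which for a map $G_\C/H_1\to G_\C/H_2$ with $H_1\subset H_2$ is simply $H_2/H_1$. This reduces the proposition to identifying the four quotients $B_K/T_\C$, $B/B_K$, $K_\C/B_K$, and $K_\C/T_\C$ (interpreting part (3) as the fiber of the composed map $\mathscr{W}\to\mathscr{U}$).

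For $B_K/T_\C$, I would use that $\mf b_K=\mf h\oplus\mf k_+$, so $B_K=T_\C\cdot N_K$ with $N_K=\exp(\mf k_+)$ its unipotent radical; since $\mf k_+$ is nilpotent, the exponential is an algebraic isomorphism $\mf k_+\xrightarrow{\sim}N_K\cong B_K/T_\C$, a contractible affine space. For $K_\C/B_K$, $B_K$ is a Borel of the reductive group $K_\C$, so $K_\C/B_K$ is the complete flag manifold of $K_\C$, smooth and projective, and via \eqref{base cycle} it is identified with the base cycle $Z_o$. For $K_\C/T_\C$, both $K_\C$ and $T_\C$ are reductive (a torus being reductive), so Matsushima's criterion guarantees that this quotient is an affine algebraic variety.

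The delicate case is $B/B_K\cong\mf p_+$. Writing $B=T_\C\cdot N_+$ with $N_+=\exp(\mf n_+)$, one obtains $B/B_K\cong N_+/N_K$. In the non-classical setting the vector-space complement $\mf p_+=\mf p\cap\mf n_+$ to $\mf k_+$ in $\mf n_+$ need not be a Lie subalgebra, so $\mf p_+$ does not automatically exponentiate to a subgroup of $N_+$; this is the main obstacle. To circumvent it I would invoke the general theorem that, for a unipotent algebraic group $N$ and any closed subgroup $H\subset N$, the quotient $N/H$ is isomorphic as a variety to affine space of dimension $\dim N-\dim H$. Concretely, one orders the positive roots so that the compact ones precede the noncompact ones, and then uses the Baker--Campbell--Hausdorff formula to produce an algebraic section $\mf p_+\to N_+$ of $N_+\to N_+/N_K$. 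In the classical Hermitian case this collapses to a semidirect product $N_+\cong N_K\ltimes\exp(\mf p_+)$, because $\mf p_+$ is then abelian and $\mf k$-stable; in the non-classical case the general unipotent-quotient result is genuinely needed. The outcome is $B/B_K\cong\mf p_+\cong\mathbb{C}^{\dim_\C\mf p_+}$, contractible and affine, completing the proof.
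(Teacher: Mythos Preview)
Your argument is correct. The paper does not actually prove this proposition; it merely states the result and refers the reader to Lecture~7 of \cite{GGK}, so there is no detailed proof in the paper to compare against. Your reduction via the Cartesian squares in \eqref{defn of WIJ} to the homogeneous fibers $H_2/H_1$ is exactly the right mechanism, and your identification of the four quotients is sound. You are also right to read part~(3) as referring to the composite $\mathscr W\to\mathscr U$ rather than to $\pi_{\mathscr I}$; the statement as printed contains a typo, since $\pi_{\mathscr I}:\mathscr W\to\mathscr I$ already appears in part~(1) with the (correct) fiber $B_K/T_\C$, whereas $K_\C/T_\C$ is the fiber of $G_\C/T_\C\to G_\C/K_\C$.

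Your handling of the delicate case $B/B_K\cong N_+/N_K$ is the substantive part and is done properly. In the non-classical situation one indeed has $[\mf p_+^1,\mf p_+^2]\subset\mf k_+$ nonzero (cf.\ Lemma~\ref{relations of the Lie brackets}), so $\mf p_+$ is not a subalgebra and $N_K$ is not normal in $N_+$; the semidirect-product shortcut available in the classical case fails, and invoking the general theorem that a quotient of a unipotent algebraic group by a closed connected subgroup is an affine space is the clean way to finish. One small sharpening: rather than ordering compact roots before noncompact ones, it is safer to choose a \emph{coexponential basis} (equivalently, order the roots of $\mf n_+$ by height and move the $\mf k_+$-roots to the end within each height); this guarantees the polynomial section $\mf p_+\to N_+$ you need without any compatibility issues in the Baker--Campbell--Hausdorff expansion.
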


In order to relate the cohomology on $D$ to that on $\mathscr W$, we recall the theorem of EGW (Eastwood-Gindikin-Wong) in \cite{EGW} following Lecture 7 of \cite{GGK}.

Let $ M, N $ be complex manifolds and $ \pi : M \to N $ a holomorphic submersion. 
For $ F \to N $ a holomorphic vector bundle, we let
\begin{itemize}
    \item $ \pi^{-1}F $ be the pullback to $ M $ of $F$ as a sheaf of holomorphic sections of $F$;
    \item $ \pi^*F =\pi^{-1}F \otimes_{\pi^{-1}\mathcal O_N}\mathcal O_M$ .
\end{itemize}
Then $ \pi^{-1}F \subset \pi^*F $ is identified with the sections of $ \pi^*F $ that are constant along the fibers of $ M \to N $.

We define $\Omega^1_\pi= \Omega^1_M/\pi^*\Omega^1_N$ and $ \Omega^q_\pi=\wedge^q \Omega^1_\pi$ to be the sheaf over $ M $ of relative holomorphic $ q $-forms, with the relative differential induced from $d$,
$$
d_\pi : \Omega^q_\pi \to \Omega^{q+1}_\pi.
$$
Then we have the short exact sequence 
$$
0 \to \pi^*\Omega^1_N \to \Omega^1_M \to \Omega^1_\pi \to 0
$$
of sheaves of holomorphic forms.

Tensoring with $\pi^{*}F$, we have $\Omega^q_\pi(F)=\Omega^q_\pi\otimes_{\mathcal O_M}\pi^{*}F$ and the relative differential with values in $F$,
$$
d_\pi : \Omega^q_\pi(F) \to \Omega^{q+1}_\pi(F).
$$
This gives the resolution of $\pi^{-1}F$
\begin{equation}\label{resl of piF}
  0 \to \pi^{-1}F \to \pi^{*}F=\Omega^0_\pi(F) \xrightarrow{d_\pi} \Omega^1_\pi(F) \xrightarrow{d_\pi} \Omega^2_\pi(F) \to \cdots.
\end{equation}

Let $\mathbb H^*(M,\Omega^\bullet_\pi(F))$ denote the hypercohomology of the complex $\Omega^\bullet_\pi(F)$ of sheaves. Then \eqref{resl of piF} implies that
$$H^*(M, \pi^{-1}F  )=\mathbb H^*(M,\Omega^\bullet_\pi(F)). $$

Let 
$$H^*_{DR}(M, \Omega^\bullet_\pi(F)) =H^*\left( \Gamma(M, \Omega^\bullet_\pi(F)); d_\pi \right)$$
be the de Rham cohomology groups, defined by the cohomology groups of the complex $$\left( \Gamma(M, \Omega^\bullet_\pi(F)); d_\pi \right)$$ of global holomorphic sections of $ \Omega^\bullet_\pi(F)$.


\begin{theorem}[EGW]\label{EGW}
  Assume that $ M $ is Stein and the fibers of $ M \to N $ are contractible. Then
\[
H^*(N, F) \cong H^*_{DR} \left(M, \Omega^\bullet_\pi(F)\right).
\]
\end{theorem}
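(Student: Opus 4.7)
The strategy is to use the resolution $\Omega^\bullet_\pi(F)$ of $\pi^{-1}F$ given in \eqref{resl of piF} as a bridge: compute the hypercohomology $\mathbb{H}^*(M,\Omega^\bullet_\pi(F))$ in two different ways, one giving the de Rham cohomology of global sections and the other giving $H^*(N,F)$.

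First I would verify the ``Stein side.'' Since $\pi:M\to N$ is a holomorphic submersion, the quotient $\Omega^1_\pi = \Omega^1_M/\pi^*\Omega^1_N$ is a locally free $\mathcal{O}_M$-module of finite rank, and so is each $\Omega^q_\pi(F) = \wedge^q\Omega^1_\pi \otimes_{\mathcal{O}_M}\pi^*F$, because $F$ is a holomorphic vector bundle. Hence every $\Omega^q_\pi(F)$ is a coherent analytic sheaf on $M$. Because $M$ is assumed Stein, Cartan's Theorem B gives $H^p(M,\Omega^q_\pi(F))=0$ for all $p\geq 1$. Feeding this into the hypercohomology spectral sequence
$$E_1^{p,q}=H^q(M,\Omega^p_\pi(F)) \Longrightarrow \mathbb{H}^{p+q}(M,\Omega^\bullet_\pi(F))$$
forces degeneration at $E_2$, and one concludes
$$\mathbb{H}^n(M,\Omega^\bullet_\pi(F)) \cong H^n\bigl(\Gamma(M,\Omega^\bullet_\pi(F)),\,d_\pi\bigr) = H^n_{DR}(M,\Omega^\bullet_\pi(F)).$$
Combined with the identification $H^*(M,\pi^{-1}F)\cong \mathbb{H}^*(M,\Omega^\bullet_\pi(F))$ coming from the resolution, this gives the first half: $H^*(M,\pi^{-1}F)\cong H^*_{DR}(M,\Omega^\bullet_\pi(F))$.

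Next I would handle the ``Leray side,'' namely showing $H^*(M,\pi^{-1}F)\cong H^*(N,F)$. The natural tool is the Leray spectral sequence
$$E_2^{p,q}=H^p(N, R^q\pi_*\pi^{-1}F) \Longrightarrow H^{p+q}(M,\pi^{-1}F),$$
so it suffices to prove $\pi_*\pi^{-1}F\cong F$ and $R^q\pi_*\pi^{-1}F=0$ for $q\geq 1$. The first isomorphism is the usual adjunction for the inverse image sheaf of a continuous surjection. For the higher direct images, compute stalks: for $x\in N$, choose a fundamental system of contractible Stein neighborhoods $U\ni x$ on which $F$ trivializes; then
$$(R^q\pi_*\pi^{-1}F)_x = \varinjlim_{U\ni x} H^q(\pi^{-1}(U),\pi^{-1}F),$$
and $\pi^{-1}F|_{\pi^{-1}(U)}$ becomes a finite direct sum of copies of the inverse image of the constant sheaf on $U$.

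The main obstacle lies here: one must show that $\pi^{-1}(U)$ has the same cohomology as $U$ with coefficients in these sheaves. This is where contractibility of the fibers enters. For $U$ small and contractible, the submersion $\pi^{-1}(U)\to U$ has contractible fibers, so the inclusion of any fiber into $\pi^{-1}(U)$ is a homotopy equivalence (or, more carefully, $\pi^{-1}(U)\to U$ is a weak homotopy equivalence). Consequently the higher direct images vanish and the Leray spectral sequence collapses to $H^p(N,F)\cong H^p(M,\pi^{-1}F)$. Chaining this with the first half yields $H^*(N,F)\cong H^*_{DR}(M,\Omega^\bullet_\pi(F))$, completing the proof. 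The nontrivial input in this last step is essentially the relative holomorphic Poincaré lemma together with the topological triviality of the fibration ensured by contractibility.
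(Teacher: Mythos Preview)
The paper does not give its own proof of this statement; it is quoted from \cite{EGW} and used as a black box, so there is nothing in the paper to compare your argument against.

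That said, your outline is the standard one and is essentially correct: Cartan~B on the Stein manifold $M$ kills the higher cohomology of the locally free sheaves $\Omega^q_\pi(F)$, collapsing one hypercohomology spectral sequence to $H^*_{DR}$, while the resolution \eqref{resl of piF} identifies $\mathbb H^*(M,\Omega^\bullet_\pi(F))$ with $H^*(M,\pi^{-1}F)$; then the Leray spectral sequence and contractibility of the fibers give $H^*(M,\pi^{-1}F)\cong H^*(N,F)$. One small correction: over a trivializing $U\subset N$ the sheaf $\pi^{-1}F$ becomes a sum of copies of $\pi^{-1}\mathcal O_U$, not of the inverse image of a \emph{constant} sheaf. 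This matters because $\mathcal O_U$ is far from constant; what saves the argument is that $\pi^{-1}\mathcal O_U$ restricts to a constant sheaf on each fiber, so the vanishing of $R^q\pi_*$ really does reduce to the contractibility hypothesis (in the applications of this paper the fibers are affine spaces and the fibrations are locally trivial, so this step is unproblematic). Your closing remark conflates two separate ingredients: the relative holomorphic Poincar\'e lemma is what makes \eqref{resl of piF} exact (and needs no hypothesis on the fibers), whereas contractibility is used only in the Leray step.
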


Applying the theorem of EGW to the holomorphic submersion $\pi:\, \mathscr W\to D$, we have that
\begin{equation}\label{DtoW}
  H^*(D,L_\mu)=H^*_{DR}(\mathscr W,\Omega_\pi^\bullet(L_\mu)).
\end{equation}

Next we relate the de Rham cohomology groups on $\mathscr W$ to that on $\mathscr I$. 

From the commutative diagram of holomorphic submersions
$$\xymatrix{
&\mathscr W\ar[d]^-{\pi_\mathscr{I}}\ar[dl]^-{\pi}\\
D &\mathscr I\ar[l]^-{\pi_D},
}$$
we have the short exact sequence of sheaves on $\mathscr W$,
$$0\to \pi_\mathscr{I}^*\Omega_{\pi_D}^1\to \Omega_\pi^1\to  \Omega_{\pi_\mathscr{I}}^1 \to 0,$$
which defines a filtration $F$ on $\Omega_\pi^\bullet$ by
$$F^p\Omega_\pi^\bullet=\mathrm{Im}\{\pi_\mathscr{I}^*\Omega_{\pi_D}^p\otimes \Omega_\pi^{\bullet-p}\to \Omega_\pi^\bullet\}.$$

The spectral sequence for $F$ is
\begin{eqnarray*}
  &&E_0^{p,q}=\Gamma(\mathscr W, \mathrm{Gr}_F^p\Omega_\pi^{p+q}(L_\mu))= \Gamma(\mathscr W, \pi_\mathscr{I}^*\Omega_{\pi_D}^p\otimes \Omega_{\pi_\mathscr{I}}^{q}(L_\mu))\\
  &&d_0=d_{\pi_\mathscr{I}}:\,E_0^{p,q}\to E_0^{p,q+1}\\
  && E_\infty^{p,q}= \mathrm{Gr}_F^p H^{p+q}_{DR}(\mathscr W, \Omega_\pi^\bullet(L_\mu)).
\end{eqnarray*}
Then 
$$E_1^{p,q}=H^q_{DR}(\mathscr W,\pi_\mathscr{I}^*\Omega_{\pi_D}^p\otimes \Omega_{\pi_\mathscr{I}}^{\bullet}(L_\mu))_{d_{\pi_\mathscr{I}}}.$$
Here we write $H^*_{DR}(\cdots)_{d_{\pi_\mathscr{I}}}$ to emphasize the differential map.
Again apply the theorem of EGW to the holomorphic submersion $\pi_\mathscr{I}:\, \mathscr W\to \mathscr I$ with sheaf $\Omega_{\pi_D}^p(L_\mu)$ to have that
$$ E_1^{p,q}=H^q_{DR}(\mathscr W,\pi_\mathscr{I}^*\Omega_{\pi_D}^p\otimes \Omega_{\pi_\mathscr{I}}^{\bullet}(L_\mu))_{d_{\pi_\mathscr{I}}}=H^q(\mathscr I,\Omega_{\pi_D}^p(L_\mu)).$$
Hence
$$E_1^{p,0}=\Gamma(\mathscr I,\Omega_{\pi_D}^p(L_\mu)),\, d_1=d_{\pi_D}:\, E_1^{p,0}\to E_1^{p+1,0},$$
and 
$$E_2^{p,0}=H^p_{DR}(\mathscr I,\Omega_{\pi_D}^\bullet(L_\mu)).$$

The quotient maps 
$$E_2^{p,0}\to E_3^{p,0}\to \cdots \to E_\infty^{p,0}= \mathrm{Gr}_F^p H^{p}_{DR}(\mathscr W, \Omega_\pi^\bullet(L_\mu))$$
induce the surjective morphism $E_2^{p,0}\to E_\infty^{p,0}$, which is
$$\pi_\mathscr{I}^*:\, H^p_{DR}(\mathscr I,\Omega_{\pi_D}^\bullet(L_\mu)) \to H^{p}_{DR}(\mathscr W, \pi_\mathscr{I}^*\Omega_{\pi_D}^\bullet(L_\mu)).$$


In summary, we have proved the following.

\begin{proposition}\label{I to W surj}
The morphism
$$\pi_\mathscr{I}^*:\, H^*_{DR}(\mathscr I,\Omega_{\pi_D}^\bullet(L_\mu)) \to H^{*}_{DR}(\mathscr W, \pi_\mathscr{I}^*\Omega_{\pi_D}^\bullet(L_\mu))$$
induced by $\pi_\mathscr{I}:\, \mathscr W\to \mathscr{I}$ is surjective.
\end{proposition}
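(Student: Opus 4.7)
The plan is to extract the statement as an edge morphism in the spectral sequence associated to the filtration $F^p \Omega_\pi^\bullet(L_\mu)$ on $\Omega_\pi^\bullet(L_\mu)$ introduced just above the Proposition, coming from the short exact sequence
$$0\to \pi_\mathscr{I}^*\Omega_{\pi_D}^1\to \Omega_\pi^1\to  \Omega_{\pi_\mathscr{I}}^1 \to 0$$
of relative cotangent sheaves for the factorization $\pi=\pi_D\circ \pi_\mathscr{I}$. The key analytic input is the EGW Theorem \ref{EGW}, whose hypotheses hold for $\pi_\mathscr{I}:\mathscr{W}\to\mathscr{I}$ because $\mathscr{W}$ is Stein and the fibers of $\pi_\mathscr{I}$ are isomorphic to the contractible affine variety $B_K/T_\C\cong \mathfrak{k}_+$.

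I would first record the $E_0$-page: by construction $E_0^{p,q}=\Gamma(\mathscr{W},\pi_\mathscr{I}^*\Omega_{\pi_D}^p\otimes \Omega_{\pi_\mathscr{I}}^{q}(L_\mu))$ with $d_0=d_{\pi_\mathscr{I}}$ acting only on the $\Omega_{\pi_\mathscr{I}}^\bullet$ factor. Applying EGW to $\pi_\mathscr{I}$ with coefficient sheaf $\Omega_{\pi_D}^p(L_\mu)$ then identifies
$$E_1^{p,q}=H^q(\mathscr{I},\Omega_{\pi_D}^p(L_\mu)).$$
Along the bottom row $q=0$, the induced $d_1$-differential is exactly $d_{\pi_D}$, so
$$E_2^{p,0}=H^p_{DR}(\mathscr{I},\Omega_{\pi_D}^\bullet(L_\mu)).$$
For $r\ge 2$ the differential $d_r:E_r^{p,0}\to E_r^{p+r,1-r}$ vanishes, since its target has negative $q$-coordinate and is therefore zero. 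Hence $E_2^{p,0}=E_\infty^{p,0}$, and the chain of quotient maps $E_2^{p,0}\twoheadrightarrow E_3^{p,0}\twoheadrightarrow\cdots\twoheadrightarrow E_\infty^{p,0}$ exhibits $E_2^{p,0}\twoheadrightarrow E_\infty^{p,0}$ as the desired surjection.

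Finally I would unwind the identification $E_\infty^{p,0}\cong H^p_{DR}(\mathscr{W},\pi_\mathscr{I}^*\Omega_{\pi_D}^\bullet(L_\mu))$: the top-filtration subquotient $\mathrm{Gr}_F^\bullet$ in pure horizontal degree realizes $\pi_\mathscr{I}^*\Omega_{\pi_D}^\bullet(L_\mu)$ with its pulled-back $d_{\pi_D}$-differential, and tracing through the EGW isomorphism on each graded piece shows that the edge map from $E_2^{p,0}$ to $E_\infty^{p,0}$ is precisely the pull-back $\pi_\mathscr{I}^*$ of de Rham classes from $\mathscr{I}$ to $\mathscr{W}$. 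This last identification is the main subtle point; the rest of the argument is routine spectral-sequence bookkeeping built on the already-established EGW computation for $\pi_\mathscr{I}$.
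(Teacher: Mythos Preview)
Your approach is essentially identical to the paper's: both set up the spectral sequence for the filtration $F^\bullet\Omega_\pi^\bullet(L_\mu)$, apply EGW to $\pi_\mathscr{I}$ to identify $E_1^{p,q}\cong H^q(\mathscr{I},\Omega_{\pi_D}^p(L_\mu))$, and then read off the surjection as the edge map $E_2^{p,0}\twoheadrightarrow E_\infty^{p,0}$.

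One small slip: the vanishing of the \emph{outgoing} differentials $d_r:E_r^{p,0}\to E_r^{p+r,1-r}$ only shows that each $E_{r+1}^{p,0}$ is a \emph{quotient} of $E_r^{p,0}$; the incoming differentials $d_r:E_r^{p-r,r-1}\to E_r^{p,0}$ need not vanish, since $E_1^{p,q}=H^q(\mathscr{I},\Omega_{\pi_D}^p(L_\mu))$ has no reason to be zero for $q>0$ ($\mathscr{I}$ is not Stein). So your claim ``$E_2^{p,0}=E_\infty^{p,0}$'' is unjustified. This is harmless for the conclusion --- the quotient chain you describe immediately after is exactly what gives the surjection, and that is precisely what the paper records --- but the equality itself should be dropped.
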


As the main result of this section, we prove a vanishing theorem on the space $\mathscr I$, which is key to the proof of the injectivity of the Penrose transformation.

Denote $$\rho_\c=\frac{1}{2}\sum_{\alpha\in \Delta_+^\c} \alpha.$$
Then we have the following vanishing theorem on $\mathscr I$ for the pull-backs of the homogenous line bundles on $D$.

\begin{theorem}\label{vanshing on I prop}
Let $L_\lambda$ be a homogenous line bundle on $D$ with weight $\lambda$.
If there exists some $\alpha \in \Delta_+^\c$ such that $(\lambda,\alpha)<0$, or equivalently $(\lambda+\rho_\c,\alpha)\le 0$,  then 
\begin{equation}\label{vanishing on I}
  \Gamma(\mathscr I, \pi_D^*L_\lambda)=0.
\end{equation}
\end{theorem}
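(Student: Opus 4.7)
The plan is to use the fibration $\pi_{\mathscr U}: \mathscr I \to \mathscr U$ as a family of compact cycles $Z_u$, each biholomorphic to $K_\C/B_K$, and to bootstrap Borel--Weil--Bott vanishing on the base cycle $Z_{u_o}$ to a global vanishing statement on $\mathscr I$. Over $u_o$, the fiber $Z_{u_o}$ is the $K_\C$-orbit of $o$ in $\check D$. Since $L_\lambda = G_\C \times_B \C_\lambda$ is $G_\C$-equivariant on $\check D$, its restriction to $Z_{u_o}$ is identified with the $K_\C$-homogeneous line bundle $K_\C \times_{B_K} \C_\lambda$ on $K_\C/B_K$. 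With the convention $\mf b_K = \mf h \oplus \sum_{\alpha \in \Delta_+^\c}\mf g_{-\alpha}$, the Borel--Weil--Bott theorem on the compact flag variety $K_\C/B_K$ then yields
\[
\Gamma\bigl(K_\C/B_K,\, L_\lambda\bigr)=0
\]
as soon as there exists some $\alpha \in \Delta_+^\c$ with $(\lambda,\alpha)<0$, so in particular $\Gamma(Z_{u_o}, \pi_D^*L_\lambda|_{Z_{u_o}})=0$.

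To propagate this vanishing off of the base cycle, I would apply the Kodaira--Spencer upper semi-continuity theorem of \cite{KS3} to the proper holomorphic submersion $\pi_{\mathscr U}: \mathscr I \to \mathscr U$ equipped with the holomorphic line bundle $\pi_D^*L_\lambda$. Its fibers $Z_u$ are all biholomorphic to $K_\C/B_K$, and the restrictions $\pi_D^*L_\lambda|_{Z_u}$ form a holomorphic family of line bundles over $\mathscr U$, so the function
\[
u\,\longmapsto\,\dim_\C\Gamma\bigl(Z_u,\,\pi_D^*L_\lambda|_{Z_u}\bigr)
\]
is upper semi-continuous on $\mathscr U$. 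Since it vanishes at $u_o$, there is an open neighborhood $U\subset \mathscr U$ of $u_o$ on which it vanishes identically. Any global section $\sigma \in \Gamma(\mathscr I,\pi_D^*L_\lambda)$ then restricts to zero on each fiber $Z_u$, $u\in U$, and therefore vanishes on the open set $\pi_{\mathscr U}^{-1}(U)\subset\mathscr I$.

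To finish I invoke the identity principle. The fibration $\pi_D:\mathscr I\to D$ has connected fibers $B/B_K\cong \mf p_+$ and $D$ is connected, hence $\mathscr I$ is connected. A holomorphic section of a line bundle on a connected complex manifold that vanishes on a nonempty open set must vanish identically, so $\sigma\equiv 0$ on $\mathscr I$.

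The step I expect to be the main obstacle is the second one: justifying rigorously that $\{\pi_D^*L_\lambda|_{Z_u}\}_{u\in\mathscr U}$ is a Kodaira--Spencer family to which \cite{KS3} applies, i.e.\ checking that $\pi_{\mathscr U}$ is a proper holomorphic submersion with compact complex fibers and that $\pi_D^*L_\lambda$ restricts to a smoothly varying holomorphic family of line bundles on them. Once that framework is set up, the remaining ingredients (Borel--Weil--Bott on a compact flag variety and the identity theorem on a connected complex manifold) are both standard.
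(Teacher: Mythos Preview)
Your proposal is correct and follows essentially the same approach as the paper: Borel--Weil--Bott vanishing on the base cycle, Kodaira--Spencer upper semi-continuity to propagate the vanishing to nearby fibers, and the identity principle on the connected manifold $\mathscr I$ to conclude. The only difference is that the paper inserts an extra step, first using the $G_\R$-action to obtain vanishing on \emph{all} $G_\R$-translates $gZ_o$ before applying semi-continuity around each of these, thereby producing a larger open set $\mathscr U^o\subset\mathscr U$ with the additional feature that $\pi_D|_{\pi_{\mathscr U}^{-1}(\mathscr U^o)}:\mathscr I^o\to D$ remains surjective; this refinement is not needed for the vanishing statement itself but is used in the subsequent Corollary and in the proof of Theorem~\ref{mainD}.
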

\begin{proof}
\noindent \textbf{Step 1.} For any $G_\R$-translation of the base cycle $Z_o$ given in \eqref{base cycle}, we have that 
$$\Gamma(gZ_o, L_\lambda|_{gZ_o})=0, \, \forall\, g\in G_\R.$$

In fact, on the base cycle, we have
$$L_\lambda|_{Z_o}=K\times_T \C_\lambda$$
and hence the Bott--Borel--Weil theorem (c.f. \cite{Bott57}, \cite{GGK} or \cite{schmid97}) implies that
$$ \Gamma(Z_o, L_\lambda|_{Z_o})=H^0(Z_o, L_\lambda|_{Z_o})=0,$$
under the assumption of the theorem.

On the $G_\R$-translation $gZ_o$, the biholomorphic map $g:\, D\to D$ induces isomorphism 
$$g:\, L_\lambda|_{Z_o} \to L_\lambda|_{gZ_o}$$
which implies that 
$$\Gamma(gZ_o, L_\lambda|_{gZ_o})=\Gamma(Z_o, L_\lambda|_{Z_o})=0.$$

\noindent \textbf{Step 2.} There exists a nontrivial open subset $\mathscr U^o\subset \mathscr U$ containing the $G_\R$-translations of the base cycle $Z_o$ such that 
\begin{equation}\label{step2}
  \Gamma(Z_u, L_\lambda|_{Z_u})=0,\, \forall\, u\in \mathscr U^o.
\end{equation}

Let $u_0\in \mathscr U$ such that $Z_{u_0}=gZ_o$ for some $g\in G_\R$. 
We can choose a connected open neighborhood  $U_{u_0}$ of $u_0$ in $\mathscr U$, over which there exists an analytic family 
$$\xymatrix{
D \supset\mathcal Z \ar[r]^-{\pi}& U_{u_0}
}$$
which is $C^\infty$-trivial, such that $Z_u=\pi^{-1}(u)$ for any $u\in U_{u_0}$. Then the restriction
$$L_\lambda|_{\mathcal Z} \to \mathcal Z$$
gives an family of line bundles $$(L_\lambda|_{\mathcal Z})|_{Z_u}=L_\lambda|_{Z_u}$$ over $U_{u_0}$. 

By Theorem 6 in \cite{KS3}, we know that $\dim H^0(Z_u, L_\lambda|_{Z_u})$ is an upper semi-continuous function of $u\in U_{u_0}$, i.e. for any $u\in U_{u_0}$, one has
$$\dim H^0(Z_u, L_\lambda|_{Z_u})\le \dim H^0(Z_{u_0},L_\lambda|_{Z_{u_0}})=0$$
which implies that $\Gamma(Z_u, L_\lambda|_{Z_u})=0$ for $u\in U_{u_0}$.

Let $$\mathscr U^o=\bigcup_{{u}=[gZ_o],\, g\in G_\R}U_{u}.$$
Then $\mathscr U^o$ satisfies \eqref{step2}.

\noindent \textbf{Step 3.} Now we can prove the theorem.

Let 
$$\mathscr I^o= \{(x,u)\in D\times \mathscr U^o:\, x\in Z_u\}=\pi_{\mathscr U}^{-1}(\mathscr U^o).$$
Then $\mathscr I^o$ is an open subset of $\mathscr I$, and the restriction $$\pi_D|_{\mathscr I^o}:\, \mathscr I^o\to D$$ is still surjective, since for any $x\in D$ there exists $g\in G_\R$ such that $gZ_o$ passes through $x$.

For any fixed $(x,u)\in \mathscr I^o$ with $x\in Z_u$, the subset
$$\tilde{Z_u}=\{(y,u)\in \mathscr I^o:\, y\in Z_u\}$$
is a compact subvariety of $\mathscr I^o$ passing through $(x,u)$, which is isomorphic to $Z_u \subset D$ via $\pi_D$. Then
$$\Gamma(\tilde{Z_u}, \pi_D^*L_\lambda|_{\tilde{Z_u}})=\Gamma({Z_u}, L_\lambda|_{{Z_u}})=0$$
by Step 2.

Since $\{\tilde{Z_u}:\,u\in \mathscr U^o\}$ covers $\mathscr I^o$ and $\mathscr I^o$ is a nontrivial open subset of $\mathscr I$, we have the conclusion \eqref{vanishing on I} of the theorem.
\end{proof}

\begin{corollary}\label{vanshing on I coro}
Let the the assumption be as in Theorem \ref{vanshing on I prop}. Let $U$ be any small enough open subset of $\mathscr U$ containing the base point $u_o$, i.e. the point representing the base cycle $Z_o$. Then
$$\Gamma(\pi_{\mathscr U}^{-1}(U), \pi_D^*L_\lambda|_{\pi_{\mathscr U}^{-1}(U)})=0.$$
\end{corollary}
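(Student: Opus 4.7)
The plan is to reduce the corollary to the fiberwise vanishing already established inside the proof of Theorem \ref{vanshing on I prop}, by noting that one does not need to cover all of $\mathscr I$ but only the preimage of a small neighborhood of $u_o$. Since the argument is local near $u_o$ and the base cycle $Z_o = Z_{u_o}$ is itself a $G_\R$-translate of $Z_o$ (the trivial one), the point $u_o$ lies in the open set $\mathscr U^o$ constructed in Step 2 of the proof of Theorem \ref{vanshing on I prop}.

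First, I would invoke the upper semi-continuity step. By Step 2 of the proof of Theorem \ref{vanshing on I prop}, there is an open neighborhood $U_{u_o} \subset \mathscr U$ of $u_o$ over which the family of cycles $\mathcal Z \to U_{u_o}$ is $C^\infty$-trivial and for which
\begin{equation*}
\Gamma(Z_u, L_\lambda|_{Z_u}) \;=\; 0 \quad \text{for every } u \in U_{u_o},
\end{equation*}
by Theorem 6 of \cite{KS3} together with the Bott--Borel--Weil vanishing on the base cycle $Z_o$. The phrase ``small enough'' in the statement of the corollary is then interpreted as $U \subset U_{u_o}$; shrinking $U$ if necessary, this is harmless.

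Next, I would carry out the fiberwise argument exactly as in Step 3 of the proof of Theorem \ref{vanshing on I prop}, but restricted to $\pi_{\mathscr U}^{-1}(U)$. For each $u \in U$, the slice
\begin{equation*}
\widetilde{Z_u} \;=\; \{(y,u) \in \pi_{\mathscr U}^{-1}(U) : y \in Z_u\}
\end{equation*}
is a compact analytic subvariety of $\pi_{\mathscr U}^{-1}(U)$, mapped isomorphically to $Z_u \subset D$ by $\pi_D$. Hence for any global section $\sigma \in \Gamma(\pi_{\mathscr U}^{-1}(U), \pi_D^*L_\lambda)$, the restriction $\sigma|_{\widetilde{Z_u}}$ corresponds to an element of $\Gamma(Z_u, L_\lambda|_{Z_u}) = 0$, so $\sigma$ vanishes on each slice $\widetilde{Z_u}$. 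Since $\pi_{\mathscr U}^{-1}(U) = \bigcup_{u \in U} \widetilde{Z_u}$, this forces $\sigma \equiv 0$ on $\pi_{\mathscr U}^{-1}(U)$, yielding the desired vanishing.

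There is essentially no genuine obstacle here beyond the work already done for Theorem \ref{vanshing on I prop}: the only verification needed is that $u_o \in \mathscr U^o$, which is immediate since $Z_o$ is the $G_\R$-translate of itself by the identity. The mild ``shrink $U$'' condition in the statement is precisely what allows one to bypass the global covering step in the previous proof and obtain vanishing on the open subset $\pi_{\mathscr U}^{-1}(U) \subset \mathscr I^o$ directly.
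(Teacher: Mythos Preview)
Your proposal is correct and follows essentially the same approach as the paper, which leaves the corollary unproved but explains in the subsequent remark that $\pi_{\mathscr U}^{-1}(U)$ is covered by the cycles $\tilde{Z_u}$ for $u\in U$, so the fiberwise vanishing from Step~2 of Theorem~\ref{vanshing on I prop} combined with the covering argument of Step~3 yields the result.
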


\begin{remark}
\textnormal{(1) In Corollary \ref{vanshing on I coro}, the open subset $ U \subset \mathscr{U} $ can be chosen sufficiently small such that the vanishing theorem remains valid. This is because $ \pi_{\mathscr{U}}^{-1}(U) $ always contains and is covered by the cycles $ \tilde{Z_u} $ for $ u \in U $.
However we do not have the vanishing theorem on the open subset $V\subset \mathscr I$, if $V$ dose not contain any $\tilde{Z_u}$ for $u\in \mathscr U$.}

\textnormal{(2) In \cite{GRT}, Griffiths, Robles and Toledo proved the cycle chain connectedness of the non-classical flag domains $D$, by proving that the incidence variety $\mathscr I$ can be connected by integral curves of a subbundle $S\simeq \mf k_-\oplus \mf p_+$ of $\mathrm{T}\mathscr I$. Now we can use the same method to show that $\mathscr I^o$ can be connected by integral curves of the restricted subbundle $S|_{\mathscr I^o}$, and conclude that any two points in $D$ can be connected by a chain of cycles in $\mathscr U^o$. Here $\mathscr U^o$ can be taken small enough containing the $G_\R$-translations of the base cycles.}

\textnormal{We will derive more geometric properties of non-classical flag domains from this observation in a forthcoming paper.}
\end{remark}


\section{Penrose transformation on flag domains}\label{section Pen}
In this section, we define the Penrose transformation on flag domains and establish its injectivity under specific conditions. Additionally, we present particular cases of weights that satisfy these conditions, so that the corresponding Penrose transformations are injective.

Let $D=G_\R/T$ be a non-classical flag domain with $G_\R$ of Hermitian type. 
Then Theorem \eqref{new complex D} implies that $D$ is diffeomorphic to a classical flag domain $D'$. The complex structures of $D$ and $D'$ are given respectively by
\begin{eqnarray*}
  \mathrm{T}^{1,0}_o D&\cong& \mf n_-=\mf k_-\oplus \mf p_-^1\oplus \mf p_-^2 \\
  \mathrm{T}^{1,0}_o D'&\cong& \mf n'_-=\mf k_-\oplus \mf p_+^1\oplus \mf p_-^2.
\end{eqnarray*}

We define the projection map
$$\pi_{D'}:\, \mathscr I\to D'$$
by sending $(x,u)\in \mathscr I$ to $x\in D'$, where $x\in D'\simeq D$ and $u=[Z_u]\in \mathscr U$. Then the fiber $\pi_{D'}^{-1}(x)$ is $\{u\in \mathscr U:\, x\in Z_u\}$. Note that there is only one $u\in \pi_{D'}^{-1}(x)$ such that $Z_u$ is the compact analytic subvariety of $D'$, which is the $G_\R$-translation of the base cycle $Z_o$. Nevertheless we still have that $\pi_{D'}$ remains a holomorphic map, as we have
$$\mathrm{T}^{1,0}_o D'\cong \mf n'_- \subset \mf k_-\oplus \mf p_-\oplus \mf p_+ \cong \mathrm{T}^{1,0}_o \mathscr I.$$

Therefore we have the following commutative diagram of holomorphic fibrations
\begin{equation}\label{Wto DD'}
  \xymatrix{
&\mathscr W\ar[ldd]_-{\pi}\ar[rdd]^-{\pi'}\ar[d]&\\
&\mathscr I\ar[ld]^-{\pi_{D}}\ar[rd]_-{\pi_{D'}}\\
D& &D'.}
\end{equation}
Since the fibers of $\pi'$ are isomorphic to those of $\pi$, we can apply Theorem \ref{EGW} of EGW to the holomorphic submersion $\pi':\, \mathscr W\to D'$ to have that
\begin{equation}\label{D'toW}
  H^*(D',L_{\mu'})=H^*_{DR}(\mathscr W,\Omega_{\pi'}^\bullet(L_{\mu'})).
\end{equation}

The Penrose transformation relates the cohomology $H^*(D',L_{\mu'})$ to $H^*(D,L_{\mu})$ via the de Rham cohomology groups of the global sections on $\mathscr W$ of relative differential forms with values in $L_{\mu'}$ and $L_\mu$. 

Before giving the precise definition, we first introduce some basic lemmas from Lie algebra and geometry of homogenous manifolds. 

In terms of root system, the complex structures on $D$ and $D'$ are given respectively by the set of positive roots
\begin{eqnarray*}
  \Delta_+&=&\Delta_+^\c\cup \Delta_+^{\nca}\cup \Delta_+^{\ncb} \\
  \Delta_+'&=&\Delta_+^\c\cup \left(-\Delta_+^{\nca}\right)\cup \Delta_+^{\ncb}.
\end{eqnarray*}
Let
\begin{eqnarray*}
  \rho &=& \frac{1}{2}\sum_{\alpha\in \Delta_+} \alpha=\frac{1}{2}\sum_{\alpha\in \Delta_+^\c} \alpha+\frac{1}{2}\sum_{\alpha\in \Delta_+^\nca} \alpha+\frac{1}{2}\sum_{\alpha\in \Delta_+^\ncb} \alpha\triangleq \rho_\c+\rho_\nca+\rho_\ncb\\
  \rho' &=& \frac{1}{2}\sum_{\alpha\in \Delta_+'} \alpha= \rho_\c-\rho_\nca+\rho_\ncb \triangleq \rho_\c+\rho'_\nc.
\end{eqnarray*}
For a weight $\lambda$ we define
\begin{eqnarray*}
  q(\lambda)&=&\#\{\alpha\in {\Delta}_+^\c:\, (\lambda,\alpha)<0\}+\#\{\alpha\in {\Delta}_+^\nc:\, (\lambda,\alpha)>0\} \\
  q'(\lambda)&=&\#\{\alpha\in {\Delta'}_+^\c:\, (\lambda,\alpha)<0\}+\#\{\alpha\in {\Delta'}_+^\nc:\, (\lambda,\alpha)>0\}.
\end{eqnarray*}

From now on, we fix the positive integer 
$$q=\#\Delta_+^\nca$$
which is the number of non-compact roots whose signs are changed when we transform the complex structure on $D$ to that on $D'$.

For convenience we write $\alpha$, $\beta$ and $\gamma$ for the roots in $\Delta_+^\c$, $\Delta_+^\nca$ and $\Delta_+^\ncb$ respectively. Then
\begin{eqnarray*}
  \Delta_+^\c &=& \{\alpha_1,\cdots, \alpha_d\}, \\
  \Delta_+^\nca &=& \{\beta_1,\cdots, \beta_q\}, \\
  \Delta_+^\ncb &=& \{\gamma_1,\cdots, \gamma_p\}
\end{eqnarray*}
where $d=\dim_\C Z_o=\#\Delta_+^\c$ and $p=\dim_\C D -d-q=\#\Delta_+^\ncb$.

Let $\omega^\delta$ be the dual to $e_\delta$ for $\delta\in \Delta$. Then $\omega^\delta$ can be considered as a left invariant differential form on $G_\C$, which
satisfies that
$$\omega^\delta(g.X)=-\delta(X)\omega^\delta(g),\,\forall\, X\in \mathfrak h,\, \forall\, g\in G_\C.$$
Here $g.X$ denotes the right infinitesimal action of $\mathfrak h$ on $G_\C$. 
Hence, $\omega^\delta$ descends to a holomorphic left-invariant differential $1$-form with values in $L_{\delta}$ on $\check{\mathscr{W}} = G_\C / T_\C$ and then restricts to the open subset $\mathscr{W}$, i.e.
$$\omega^\delta\in \Gamma(\mathscr{W}, \Omega_{\mathscr{W}}^1(L_{\delta})).$$

\begin{lemma}\label{structure constant}
Let $C_{\delta\delta'}^{\delta''}$, $\delta,\delta',\delta''\in \Delta_+$, be the structure constants of the algebra $\mf n_+=\mf k_+\oplus \mf p_+^1\oplus \mf p_+^2$ such that
$$[e_{-\delta},e_{-\delta'}]=\sum_{\delta''\in \Delta_+} C_{\delta\delta'}^{\delta''}e_{-\delta''},\, C_{\delta\delta'}^{\delta''}=-C_{\delta'\delta}^{\delta''}.$$
Then 
\begin{eqnarray}
&& C_{\alpha_i\alpha_{i'}}^{\beta_j}=0,\,  C_{\alpha_i\alpha_{i'}}^{\gamma_k}=0; \label{cc}\\
&& C_{\alpha_i\beta_j}^{\alpha_{i'}}=0,\,  C_{\alpha_i\beta_j}^{\gamma_k}=0;\,  C_{\alpha_i\gamma_k}^{\alpha_{i'}}=0,\, C_{\alpha_i\gamma_k}^{\beta_{j}}=0; \label{cnc}\\
&& C_{\beta_j\gamma_k}^{\beta_{j'}}=0,\,  C_{\beta_j\gamma_k}^{\gamma_{k'}}=0;\label{ncnc}\\
&& C_{\beta_j\beta_{j'}}^{\delta}=0,\,  C_{\gamma_k\gamma_{k'}}^{\delta}=0,\, \forall\, \delta\in \Delta_+,\label{ncinci}
\end{eqnarray}
for $1\le i,i'\le d$, $1\le j,j' \le q$, $1\le k,k' \le r$.

\end{lemma}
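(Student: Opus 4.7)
The plan is to derive each relation as a direct consequence of the Lie-bracket containments already established in Lemma \ref{relations of the Lie brackets}, read off on the ``$+$'' side. The key observation is that for $\delta,\delta'\in\Delta_+$ one has $[e_{-\delta},e_{-\delta'}]\in\mf g_{-\delta-\delta'}$, which is the corresponding root space when $-\delta-\delta'\in\Delta$ and is $0$ otherwise. Consequently any nonzero $C_{\delta\delta'}^{\delta''}$ forces $\delta''=\delta+\delta'$, and then forces $\mf g_{-\delta''}$ to lie inside whatever subspace of $\mf n_+=\mf k_+\oplus \mf p_+^1\oplus \mf p_+^2$ the bracket $[e_{-\delta},e_{-\delta'}]$ is already known to occupy.

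First I would record the sign-reversed form of Lemma \ref{relations of the Lie brackets}, namely $[\mf k_+,\mf k_+]\subset\mf k_+$, $[\mf k_+,\mf p_+^i]\subset\mf p_+^i$ for $i=1,2$, $[\mf p_+^1,\mf p_+^2]\subset\mf k_+$, and $[\mf p_+^i,\mf p_+^i]=0$. Each relation in the present lemma then amounts to reading off the corresponding containment in the basis $\{e_{-\alpha_i}\}\cup\{e_{-\beta_j}\}\cup\{e_{-\gamma_k}\}$ of $\mf n_+$.

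Concretely, \eqref{cc} follows because $[e_{-\alpha_i},e_{-\alpha_{i'}}]\in\mf k_+$ has no component in the $e_{-\beta_j}$ or $e_{-\gamma_k}$ directions; \eqref{cnc} follows from $[\mf k_+,\mf p_+^1]\subset\mf p_+^1$ (ruling out $\alpha_{i'}$ and $\gamma_k$ components of $[e_{-\alpha_i},e_{-\beta_j}]$) together with $[\mf k_+,\mf p_+^2]\subset\mf p_+^2$ (ruling out $\alpha_{i'}$ and $\beta_j$ components of $[e_{-\alpha_i},e_{-\gamma_k}]$); \eqref{ncnc} follows from $[\mf p_+^1,\mf p_+^2]\subset\mf k_+$; and \eqref{ncinci} follows from $[\mf p_+^i,\mf p_+^i]=0$, which in particular prevents $\beta_j+\beta_{j'}$ and $\gamma_k+\gamma_{k'}$ from being roots (cf.\ Remark \ref{minimal p1}) and hence forces every $C_{\beta_j\beta_{j'}}^\delta$ and $C_{\gamma_k\gamma_{k'}}^\delta$ to vanish.

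There is no real obstacle here: the lemma is essentially a bookkeeping translation of Lemma \ref{relations of the Lie brackets} from the level of subspace inclusions to vanishing statements for individual structure constants. The only mild care needed is to note that the diagonal cases in \eqref{ncinci} (where $j=j'$ or $k=k'$, for which $[e_{-\beta},e_{-\beta}]=0$ is automatic) and the off-diagonal cases are handled uniformly by the single relation $[\mf p_+^i,\mf p_+^i]=0$.
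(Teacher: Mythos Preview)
Your proposal is correct and takes essentially the same approach as the paper: the paper's proof simply states that \eqref{cc}, \eqref{cnc}, \eqref{ncnc}, \eqref{ncinci} follow respectively from the bracket relations \eqref{kkk}, \eqref{kpipi}, \eqref{p1p2}, \eqref{pipi} of Lemma~\ref{relations of the Lie brackets}. Your write-up is more explicit about the mechanism (reading off components in the basis of $\mf n_+$), but the underlying argument is identical.
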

\begin{proof}
Equation \eqref{cc}, \eqref{cnc},\eqref{ncnc},\eqref{ncinci} follow from \eqref{kkk}, \eqref{kpipi}, \eqref{p1p2}, \ref{pipi} in Lemma \ref{relations of the Lie brackets} respectively.
\end{proof}

In the following, we write $$\omega\equiv_\pi \omega'\, \, \mbox{for}\,  \,\omega,\, \omega'\in  \Gamma(\mathscr{W}, \Omega_{\pi}^\bullet(L_{\lambda}))$$ if $\omega-\omega'=0$ in $\Omega_{\pi}^\bullet(L_{\lambda})$.

\begin{lemma}\label{dpi omgega}
Let
\begin{eqnarray*}
   && \omega^{-\alpha_i}\in \Gamma(\mathscr{W}, \Omega_{\pi}^1(L_{-\alpha_i})),\, \alpha_i\in \Delta_+^\c,\, 1\le i\le d; \\
  && \omega^{-\beta_j}\in  \Gamma(\mathscr{W}, \Omega_{\pi}^1(L_{-\beta_j})),\, \alpha_j\in \Delta_+^\nca,\, 1\le j\le q; \\
  && \omega^{-\gamma_k}\in  \Gamma(\mathscr{W}, \Omega_{\pi}^1(L_{-\gamma_k})),\, \gamma_k\in \Delta_+^\ncb,\, 1\le k\le p.
\end{eqnarray*}
Then
\begin{eqnarray}
 && d_\pi \omega^{-\alpha_i}\equiv_\pi -\frac{1}{2}\sum_{i',i''}C_{\alpha_{i'}\alpha_{i''}}^{\alpha_i} \omega^{-\alpha_{i'}} \wedge  \omega^{-\alpha_{i''}}- \sum_{j,k}C_{\beta_{j}\gamma_{k}}^{\alpha_i} \omega^{-\beta_{j}} \wedge  \omega^{-\gamma_{k}};\label{dalpha} \\
 && d_\pi \omega^{-\beta_j}\equiv_\pi - \sum_{i,j'}C_{\alpha_{i}\beta_{j'}}^{\beta_j} \omega^{-\alpha_{i}} \wedge  \omega^{-\beta_{j'}};\label{dbeta}\\
 && d_\pi \omega^{-\gamma_{k}}\equiv_\pi - \sum_{i,k'}C_{\alpha_{i}\gamma_{k'}}^{\gamma_{k}} \omega^{-\alpha_{i}} \wedge  \omega^{-\gamma_{k'}}\label{dgamma}.
\end{eqnarray}
\end{lemma}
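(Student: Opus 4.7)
The plan is to deduce all three formulas from the Maurer--Cartan structure equations on $G_\C$, combined with the vanishing-of-structure-constants information in Lemma \ref{structure constant}. I would begin from the identity
$$d\omega^\lambda = -\frac{1}{2}\sum_{\mu,\nu} c^\lambda_{\mu\nu}\,\omega^\mu \wedge \omega^\nu$$
on $G_\C$, where the sum ranges over a basis of $\mf g$ (including a basis of $\mf h$) and the $c^\lambda_{\mu\nu}$ are defined by $[e_\mu,e_\nu]=\sum_\lambda c^\lambda_{\mu\nu}e_\lambda$. Descending to $\mathscr W = G_\C/T_\C$, where $\omega^\lambda$ is viewed as a section of $\Omega^1_{\mathscr W}(L_\lambda)$, the Cartan-direction contributions $\omega^{h_k}\wedge\omega^{-\lambda}$ get absorbed into the canonical connection on $L_\lambda$ coming from the principal $T_\C$-bundle $G_\C\to \mathscr W$, so that only pairs $(\mu,\nu)$ with both indices in $\Delta$ survive on $\mathscr W$.

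Next I would pass to the relative complex $\Omega^\bullet_\pi = \Omega^\bullet_{\mathscr W}/\pi^*\Omega^\bullet_D$. Because $\pi: \mathscr W \to D = G_\C/B$ with $\mf b = \mf h \oplus \sum_{\alpha\in\Delta_+}\mf g_{-\alpha}$, the subsheaf $\pi^*\Omega^1_D$ is spanned by $\{\omega^\alpha : \alpha \in \Delta_+\}$ (the duals to $\mf n_-$). Hence any wedge $\omega^\mu\wedge\omega^\nu$ in which $\mu$ or $\nu$ belongs to $\Delta_+$ vanishes in $\Omega^2_\pi$, and the only surviving terms in $d_\pi\omega^{-\lambda}$ come from $\mu = -\delta$, $\nu = -\delta'$ with $\delta,\delta'\in\Delta_+$ satisfying $\delta+\delta'=\lambda$. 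For such pairs the structure constant $c^{-\lambda}_{-\delta,-\delta'}$ is precisely the constant $C^\lambda_{\delta\delta'}$ defined in Lemma \ref{structure constant}.

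The final step is bookkeeping: for each of $\lambda = \alpha_i,\beta_j,\gamma_k$, enumerate all decompositions $\delta+\delta'=\lambda$ with $\delta,\delta'\in\Delta_+$ and discard those for which Lemma \ref{structure constant} forces $C^\lambda_{\delta\delta'}=0$. For $\lambda=\alpha_i$ the only non-vanishing contributions come from $(\delta,\delta')\in\Delta_+^\c\times\Delta_+^\c$ and the mixed type $(\delta,\delta')\in\Delta_+^\nca\times\Delta_+^\ncb$; in the mixed case the two orderings combine via antisymmetry in both $C$ and $\wedge$ to replace the $-\tfrac12$ by $-1$, producing \eqref{dalpha}. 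For $\lambda=\beta_j$ the relations \eqref{cnc}, \eqref{ncnc} and \eqref{ncinci} kill every pair type except $\Delta_+^\c\times\Delta_+^\nca$, and the same symmetrization yields \eqref{dbeta}. The case $\lambda=\gamma_k$ is entirely parallel and produces \eqref{dgamma}.

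The main technical obstacle is making the first step rigorous: verifying that when $\omega^{-\lambda}$ is regarded as a section of $\Omega^1_\pi(L_{-\lambda})$ on $\mathscr W$, the Cartan contributions $-\lambda(h_k)\,\omega^{h_k}\wedge\omega^{-\lambda}$ appearing in the Maurer--Cartan equation on $G_\C$ are really absorbed by the induced connection on $L_{-\lambda}$, so that the claimed identity holds globally on $\mathscr W$ rather than only pointwise at the base point. Once this connection-theoretic point is unwound, the rest of the argument is a short enumeration against Lemma \ref{structure constant}.
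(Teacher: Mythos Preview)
Your proposal is correct and follows essentially the same route as the paper: the paper invokes the formula $-2d_\pi\omega^{-\alpha_i}\equiv_\pi\sum_{\delta,\delta'\in\Delta_+}C^{\alpha_i}_{\delta\delta'}\,\omega^{-\delta}\wedge\omega^{-\delta'}$ by citing (3.8) of Schmid's thesis and Page~260 of Griffiths--Schmid (which packages exactly the descent-and-connection argument you identify as your ``main technical obstacle''), then expands the double sum over the nine pair-types and applies Lemma~\ref{structure constant} together with the antisymmetry $C^{\alpha_i}_{\beta_j\gamma_k}=-C^{\alpha_i}_{\gamma_k\beta_j}$, just as you do. The only difference is that the paper outsources your first two steps to the literature rather than re-deriving them from the Maurer--Cartan equation.
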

\begin{proof}
From (3.8) of \cite{schmid67} (see also \cite{GS} Page 260), we have that 
\begin{eqnarray}
  &&-2d_\pi \omega^{-\alpha_i} \equiv_\pi  \sum_{\delta,\delta'\in \Delta_+} C_{\delta\delta'}^{\alpha_i}\omega^{-\delta}\wedge \omega^{-\delta'}\nonumber\\
  &&= \sum_{i',i''}C_{\alpha_{i'}\alpha_{i''}}^{\alpha_i} \omega^{-\alpha_{i'}}\wedge \omega^{-\alpha_{i''}}+\sum_{i',j}C_{\alpha_{i'}\beta_j}^{\alpha_i} \omega^{-\alpha_{i'}}\wedge \omega^{-\beta_j}+\sum_{i',k}C_{\alpha_{i'}\gamma_k}^{\alpha_i} \omega^{-\alpha_{i'}}\wedge \omega^{-\gamma_k}\nonumber\\
   &&\,\,\,\,\,\,\,+\sum_{j,i'}C_{\beta_j\alpha_{i'}}^{\alpha_i} \omega^{-\beta_j}\wedge \omega^{-\alpha_{i'}}+\sum_{j,j'}C_{\beta_j\beta_{j'}}^{\alpha_i} \omega^{-\beta_j}\wedge \omega^{-\beta_{j'}}+\sum_{j,k}C_{\beta_j\gamma_k}^{\alpha_i} \omega^{-\beta_j}\wedge \omega^{-\gamma_k} \nonumber\\
   &&\,\,\,\,\,\,\,+\sum_{k,i'}C_{\gamma_k\alpha_{i'}}^{\alpha_i} \omega^{-\gamma_k}\wedge \omega^{-\alpha_{i'}}+\sum_{k,j}C_{\gamma_k\beta_j}^{\alpha_i} \omega^{-\gamma_k}\wedge \omega^{-\beta_{j}}+\sum_{k,k'}C_{\gamma_k\gamma_{k'}}^{\alpha_i}\omega^{-\gamma_k}\wedge \omega^{-\gamma_{k'}} .\nonumber
\end{eqnarray}
Then \eqref{dalpha} follows from Lemma \ref{structure constant} and that $C_{\beta_{j}\gamma_{k}}^{\alpha_i}=-C_{\gamma_{k}\beta_{j}}^{\alpha_i}$. 

Similarly we can deduce \eqref{dbeta} and \ref{dgamma} from Lemma \ref{structure constant} and the anti-symmetry of the structure constants.
\end{proof}

\begin{lemma}\label{omega nc1}
Let $$\omega^\nca=\omega^{-\beta_1}\wedge \cdots \wedge \omega^{-\beta_q}$$ be a left invariant differential form on $G_\C$. Then $\omega^\nca$ descends to a left invariant differential $q$-form with values in $L_{-2\rho_\nca}$ on $\mathscr W$ which is $d_\pi$-closed, i.e. 
$$\omega^\nca\in  \mathcal Z_{d_{\pi}}(\mathscr W, \Omega_\pi^q(L_{-2\rho_\nca})).$$
\end{lemma}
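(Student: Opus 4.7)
The plan is to establish the two claims of the lemma separately: first, that $\omega^\nca$ descends to a left-invariant section of $\Omega_\pi^q(L_{-2\rho_\nca})$ on $\mathscr W$; second, that $d_\pi\omega^\nca \equiv_\pi 0$.

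For the descent, each $\omega^{-\beta_j}$ is already shown in the excerpt to lie in $\Gamma(\mathscr W, \Omega_{\mathscr W}^1(L_{-\beta_j}))$; wedging multiplies the line-bundle weights, and the identity $\beta_1+\cdots+\beta_q = 2\rho_\nca$ (immediate from $\Delta_+^\nca = \{\beta_1,\ldots,\beta_q\}$) gives $\omega^\nca \in \Gamma(\mathscr W, \Omega_{\mathscr W}^q(L_{-2\rho_\nca}))$, which then projects to $\Gamma(\mathscr W, \Omega_\pi^q(L_{-2\rho_\nca}))$.

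For the $d_\pi$-closedness, I would apply the graded Leibniz rule for $d_\pi$ and substitute formula \eqref{dbeta} of Lemma \ref{dpi omgega}:
$$
d_\pi \omega^\nca = \sum_{j=1}^q (-1)^{j-1}\,\omega^{-\beta_1} \wedge \cdots \wedge d_\pi\omega^{-\beta_j} \wedge \cdots \wedge \omega^{-\beta_q},
$$
where $d_\pi \omega^{-\beta_j} \equiv_\pi -\sum_{i,j'} C_{\alpha_i\beta_{j'}}^{\beta_j}\,\omega^{-\alpha_i}\wedge\omega^{-\beta_{j'}}$. Two vanishing mechanisms will eliminate every resulting summand. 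When $j'\neq j$, the factor $\omega^{-\beta_{j'}}$ already appears in the product at its original position, so inserting another copy via the substitution at position $j$ kills the wedge. When $j'=j$, the surviving coefficient $C_{\alpha_i\beta_j}^{\beta_j}$ is the $e_{-\beta_j}$-component of $[e_{-\alpha_i},e_{-\beta_j}] \in \mf g_{-\alpha_i-\beta_j}$; since $\alpha_i\in\Delta_+^\c$ is a nonzero root, $-\alpha_i-\beta_j\neq -\beta_j$, so this coefficient vanishes. Hence $d_\pi \omega^\nca \equiv_\pi 0$.

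The main obstacle is purely bookkeeping: one must track the Leibniz sign $(-1)^{j-1}$ together with the sign $(-1)^{j-1}$ from moving $\omega^{-\alpha_i}$ to the front of $\omega^{-\beta_1}\wedge\cdots\wedge\omega^{-\beta_{j-1}}$, but their product is $(-1)^{2(j-1)}=1$, so these signs do not obstruct the two vanishing arguments. No deeper structural issue arises, since the closedness is forced by the weight arithmetic together with the bracket relations already encoded in Lemma \ref{structure constant} (reflecting that $\mf p_-^1$ is abelian and stable under $\ad \mf k_-$).
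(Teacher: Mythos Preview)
Your proof is correct and follows essentially the same approach as the paper: both apply the Leibniz rule, substitute formula \eqref{dbeta}, and observe that the terms with $j'=j$ vanish because $C_{\alpha_i\beta_j}^{\beta_j}=0$ (weight mismatch), while the terms with $j'\neq j$ vanish because $\omega^{-\beta_{j'}}$ already occurs in the wedge. Your treatment of the descent step and the sign bookkeeping is slightly more explicit than the paper's, but there is no substantive difference.
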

\begin{proof}
With the discussion before Lemma \ref{structure constant}, we only need to prove that $d_\pi(\omega^\nca)=0$.

From \eqref{dbeta}, we have that
$$d_\pi \omega^{-\beta_j}\equiv_\pi - \sum_{i,j'}C_{\alpha_{i}\beta_{j'}}^{\beta_j} \omega^{-\alpha_{i}} \wedge  \omega^{-\beta_{j'}},$$
and the structure constants
$$C_{\alpha_{i}\beta_{j'}}^{\beta_j} \neq 0\text{ only if } j\neq j'\text{ and $\alpha+\beta_{j'}=\beta_j$ }.$$
Therefore
\begin{eqnarray*}
  d_\pi(\omega^\nca) &=& \sum_{1\le j\le q}(-1)^{j-1} \omega^{-\beta_1}\wedge \cdots \wedge d_\pi \omega^{-\beta_j} \wedge\cdots \wedge \omega^{-\beta_q} \\
   &\equiv_\pi& \sum_{1\le j\le q}(-1)^{j} \omega^{-\beta_1}\wedge \cdots \wedge \left(\sum_{i,j'\neq j}C_{\alpha_{i}\beta_{j'}}^{\beta_j} \omega^{-\alpha_{i}} \wedge  \omega^{-\beta_{j'}}\right) \wedge\cdots \wedge \omega^{-\beta_q}\\
   &=&0.
\end{eqnarray*}
\end{proof}

\begin{lemdef}\label{Pen defn}
Let $\mu$ and $\mu'$ be two weights such that $\mu+\rho=\mu'+\rho'$.
Then the Penrose transformation $$\mathscr P:\, H^0(D',L_{\mu'})\to H^q(D,L_{\mu})$$ is defined by
\begin{equation}\label{PenD}
  \xymatrix{
  H^0(D',L_{\mu'}) \ar[d]^-{\cong}\ar[rr]^-{\mathscr P}&& H^q(D,L_{\mu})\ar[d]^-{\cong}\\
  H^0_{DR}(\mathscr W,\Omega_{\pi'}^\bullet(L_{\mu'}))\ar[rr]^-{\omega^\nca}&&H^q_{DR}(\mathscr W,\Omega_{\pi}^\bullet(L_{\mu})).
  }
\end{equation}
\end{lemdef}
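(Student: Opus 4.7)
The plan is to verify that diagram \eqref{PenD} defines a bona fide map of cohomology groups. The two vertical isomorphisms follow from Theorem \ref{EGW} applied to the Stein manifold $\mathscr{W}$ together with the holomorphic submersions $\pi$ and $\pi'$, both having contractible fibers by the discussion after diagram \eqref{comm2}. What remains is to show that wedging with $\omega^\nca$ carries $d_{\pi'}$-closed global sections of $\Omega^0_{\pi'}(L_{\mu'})$ to $d_\pi$-closed global sections of $\Omega^q_\pi(L_\mu)$, so that the induced map of classes on the lower row is well-defined.

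I would first settle the weight bookkeeping. Writing $\rho=\rho_\c+\rho_\nca+\rho_\ncb$ and $\rho'=\rho_\c-\rho_\nca+\rho_\ncb$, the hypothesis $\mu+\rho=\mu'+\rho'$ forces $\mu=\mu'-2\rho_\nca$. By Lemma \ref{omega nc1}, $\omega^\nca$ represents a $d_\pi$-closed element of $\Omega^q_\pi(L_{-2\rho_\nca})$, so for $F\in\Gamma(\mathscr{W},\Omega^0_{\pi'}(L_{\mu'}))$ the product $F\cdot\omega^\nca$ naturally takes values in $L_{\mu'}\otimes L_{-2\rho_\nca}=L_\mu$; and each factor $\omega^{-\beta_j}$ has $-\beta_j\in -\Delta_+$, so its class in $\Omega^1_\pi=\Omega^1_{\mathscr{W}}/\pi^*\Omega^1_D$ is nonzero. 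Hence $F\cdot\omega^\nca$ is at least a candidate element of $\Gamma(\mathscr{W},\Omega^q_\pi(L_\mu))$.

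The core computation is the verification that $d_\pi(F\cdot\omega^\nca)=0$ whenever $d_{\pi'}F=0$. Viewing $F$ as a $T_\C$-equivariant holomorphic function on the preimage of $\mathscr{W}$ in $G_\C$, we expand in the left-invariant coframe
\[
dF=\sum_{\delta\in\Delta}e_\delta(F)\,\omega^\delta.
\]
The condition $d_{\pi'}F=0$ is equivalent to $dF\in\pi'^*\Omega^1_{D'}$, i.e.\ to the vanishing of the coefficients $e_\delta(F)$ for $\delta\notin\Delta'_+$, that is, for $\delta\in(-\Delta_+^\c)\cup\Delta_+^\nca\cup(-\Delta_+^\ncb)$. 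Expanding $d(F\omega^\nca)=dF\wedge\omega^\nca+F\cdot d\omega^\nca$ and projecting to $\Omega^{q+1}_\pi$: the terms of $dF\wedge\omega^\nca$ indexed by $\delta\in\Delta_+$ drop out because $\omega^\delta\in\pi^*\Omega^1_D$; the terms indexed by $\delta\in-\Delta_+^\nca$ vanish by antisymmetry of the wedge with $\omega^\nca$; and the remaining terms with $\delta\in(-\Delta_+^\c)\cup(-\Delta_+^\ncb)$ have vanishing coefficients from the $d_{\pi'}$-closed hypothesis. The second summand $F\cdot d\omega^\nca$ also vanishes in $\Omega^{q+1}_\pi$ by Lemma \ref{omega nc1}. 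Passing to classes (there are no coboundaries in source degree zero) produces the Penrose transformation $\mathscr{P}$.

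The only conceptual subtlety in this plan is the interlocking in the third paragraph: $\Delta_+$ and $\Delta'_+$ differ precisely in the roots of $\Delta_+^\nca$, which are exactly the roots appearing in $\omega^\nca$. This interlocking is what allows the three possible obstructions --- being a $\pi$-pullback from $D$, being absorbed by the wedge with $\omega^\nca$, or being killed by the $d_{\pi'}$-closed condition --- to exhaust all the directions of $dF$, making $d_\pi(F\omega^\nca)=0$ essentially automatic once the bookkeeping is set up correctly.
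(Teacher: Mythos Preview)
Your proof is correct and follows essentially the same approach as the paper. The paper phrases the key computation more succinctly by observing directly that $(d_\pi-d_{\pi'})F_\sigma\equiv_\pi\sum_j a_j\,\omega^{-\beta_j}$ (since $\Delta_+$ and $\Delta'_+$ differ only in $\pm\Delta_+^\nca$) and then wedging with $\omega^\nca$, whereas you unpack this into an explicit case analysis over all $\delta\in\Delta$; but the underlying logic is identical.
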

\begin{proof}
We need to check the well-definedness of $\mathscr P$ given as above.

For any $\sigma\in H^0(D',L_{\mu'})$, let $$F_\sigma \in \mathcal Z_{d_{\pi'}}(\mathscr W,L_{\mu'})=H^0_{DR}(\mathscr W,\Omega_{\pi'}^\bullet(L_{\mu'}))$$ correspond to $\sigma$ via the left vertical isomorphism in \eqref{PenD}. 

For convenience, we will not distinguish the groups and their elements in the vertical isomorphisms in \eqref{PenD}.

Note that $\mu+\rho=\mu'+\rho'$ implies that $\mu=\mu'-2\rho_\nca$. Hence Lemma \ref{omega nc1} implies that $$F_\sigma \omega^\nca\in \Gamma(\mathscr W, \Omega_\pi^q(L_{\mu})).$$
We need to check that $d_\pi(F_\sigma \omega^\nca)=0$.

Since the complex structures of $D$ and $D'$ only differ in the directions corresponding to $\mf p_-^1$, we have that $$(d_\pi-d_{\pi'})F_\sigma \equiv_\pi\sum_{j}a_j \omega^{-\beta_j}.$$
Therefore from Lemma \ref{omega nc1} we have that
\begin{eqnarray*}
  d_\pi(F_\sigma \omega^\nca) &=& d_\pi(F_\sigma) \wedge\omega^\nca +F_\sigma d_\pi(\omega^\nca)\\
   &\equiv_\pi& (d_\pi-d_{\pi'})(F_\sigma) \wedge\omega^\nca\\
   &\equiv_\pi&\sum_{j}a_j \omega^{-\beta_j}\wedge\omega^\nca=0.
\end{eqnarray*}
Hence
$$F_\sigma \omega^\nca\in \mathcal Z_{d_\pi}(\mathscr W, \Omega_\pi^q(L_{\mu})),$$
and the Penrose transformation given by \eqref{PenD} is well-defined.
\end{proof}

\begin{theorem}\label{mainD}
Let $D=G_\R/T$ be a non-classical flag domain with $G_\R$ of Hermitian type. Let $D'$ be the classical flag domain which is diffeomorphic to $D$.

If $\mu$ and $\mu'$ are two weights with $\mu+\rho=\mu'+\rho'$, and satisfy that for any $\beta\in\Delta_+^\nca$ there exists an $\alpha_\beta\in \Delta_+^\c$ such that 
\begin{equation}\label{Pen inj conditions}
  (\alpha_\beta,\mu'-\beta)<0,
\end{equation}
then the Penrose transformation $$\mathscr P:\, H^0(D',L_{\mu'})\to H^q(D,L_{\mu})$$ given by \eqref{PenD} is injective.
\end{theorem}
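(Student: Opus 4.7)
The plan is to show that if $\mathscr P[F]=0$ for some $[F]\in H^0(D',L_{\mu'})$, then $F\equiv 0$. Via Lemma-Definition \ref{Pen defn}, $F$ corresponds to a $d_{\pi'}$-closed holomorphic section of $L_{\mu'}$ on $\mathscr W$, and there exists $\Psi\in\Gamma(\mathscr W,\Omega_\pi^{q-1}(L_\mu))$ with $F\omega^\nca=d_\pi\Psi$. Since $F$ is holomorphic on the connected manifold $\mathscr W$, it will suffice to exhibit a nonempty open subset on which $F$ vanishes. The strategy is to extract a pure-type piece $\Psi'$ of $\Psi$, descend it to the incidence variety $\mathscr I$, and then invoke the vanishing theorem of Corollary \ref{vanshing on I coro}.

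First I would decompose $\Omega_\pi^{q-1}(L_\mu)$ using the splitting $\mf n_+=\mf k_+\oplus\mf p_+^1\oplus\mf p_+^2$ of the fiber directions of $\pi$, and let $\Psi'$ be the projection of $\Psi$ onto $\wedge^{q-1}(\mf p_+^1)^*\otimes L_\mu$. A type-counting argument based on Lemmas \ref{structure constant} and \ref{dpi omgega} shows that only the $\mf p_+^1$-directional derivative $\partial_{\mf p_+^1}\Psi'$ can contribute to the pure $\wedge^q(\mf p_+^1)^*$ component of $d_\pi\Psi$: every other piece of $\Psi$, when differentiated, retains an $\omega^{-\alpha}$ or $\omega^{-\gamma}$ factor that cannot be cancelled. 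Matching pure types in $F\omega^\nca=d_\pi\Psi$ therefore yields the key equation $\partial_{\mf p_+^1}\Psi'=F\omega^\nca$. Writing $\Psi'=\sum_{j=1}^q\Psi'_j\,\hat\omega^{-\beta_j}$, where $\hat\omega^{-\beta_j}$ is obtained from $\omega^\nca$ by omitting $\omega^{-\beta_j}$, each coefficient $\Psi'_j$ is a holomorphic section of $L_{\mu'-\beta_j}$, since $\mu=\mu'-2\rho_\nca$.

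Next I would work in the local trivialization $\mathscr W\simeq(B_K/T_\C)\times V_\nu\times U$, where $\{V_\nu\}$ is an open cover of $K_\C/B_K$ trivializing the homogeneous bundles and $U\subset\mathscr U$ is an open neighborhood of the base point $u_o$. Because each weight $\mu'-\beta_j$ extends to a character of $B_K$, the line bundle $L_{\mu'-\beta_j}$ on $\mathscr W$ is the pullback of a line bundle on $\mathscr I$; moreover $F$, being $d_{\pi'}$-closed, is a pullback from $D'$ and hence also from $\mathscr I$ via $\pi'=\pi_{D'}\circ\pi_{\mathscr I}$, and the $\mf k_+$-invariance of $\wedge^q(\mf p_+^1)^*$ (coming from $[\mf k_+,\mf p_+^1]\subseteq\mf p_+^1$) shows that $\omega^\nca$ is itself a pullback from $\mathscr I$. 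I would then define $\Xi_j$ as the component of $\Psi'_j$ that is constant along the fiber $B_K/T_\C$ (for instance, the constant term of its power-series expansion in the $\mf k_+$-direction in the local trivialization), so that $\Xi=\sum_j\Xi_j\,\hat\omega^{-\beta_j}$ descends to a section $\tilde\Xi$ on the open subset $\mathscr I^o=(K_\C/B_K)\times U$ of $\mathscr I$. Since both sides of $\partial_{\mf p_+^1}\Psi'=F\omega^\nca$ are already constant along $B_K/T_\C$ up to the weight twist, the descended identity $\partial_{\mf p_+^1}\Xi=F\omega^\nca$ persists on $(K_\C/T_\C)\times U$.

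To finish, I would apply Corollary \ref{vanshing on I coro} summand by summand. Hypothesis \eqref{Pen inj conditions} supplies, for each $\beta_j\in\Delta_+^\nca$, some $\alpha\in\Delta_+^\c$ with $(\alpha,\mu'-\beta_j)<0$, whence $\Gamma(\mathscr I^o,\pi_D^*L_{\mu'-\beta_j})=0$ and $\tilde\Xi_j=0$ for every $j$. Therefore $\Xi=0$ on $(K_\C/T_\C)\times U$, so $F\omega^\nca=\partial_{\mf p_+^1}\Xi=0$ on this open subset and hence $F=0$ there. The identity principle for holomorphic sections on the connected manifold $\mathscr W$ then gives $F\equiv 0$, proving the injectivity of $\mathscr P$. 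The hard part will be the descent step: one must rigorously construct $\Xi$ as a genuine pullback from $\mathscr I$ while preserving the equation $\partial_{\mf p_+^1}\Xi=F\omega^\nca$, which requires carefully matching the $B_K/T_\C$-fiber projection with the twisted line-bundle structures $L_{\mu'-\beta_j}$ and the left-invariance of $\omega^\nca$, and checking that extracting the fiber-constant part commutes with the relevant $\mf p_+^1$-derivatives.
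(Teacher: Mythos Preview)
Your proposal is correct and follows essentially the same route as the paper's proof: extract the pure $\wedge^{q-1}(\mf p_+^1)^*$-component $\Psi'$ of $\Psi$, use the structure constants (Lemmas \ref{structure constant} and \ref{dpi omgega}) to reduce $F\omega^\nca=d_\pi\Psi$ to $F\omega^\nca=\partial_{\mf p_+^1}\Psi'$, pass to the fiber-constant part $\Xi$ in the local trivialization $(B_K/T_\C)\times V_\nu\times U$, descend to $\mathscr I^o$, and kill each coefficient $\Xi_j\in\Gamma(\mathscr I^o,\pi_D^*L_{\mu'-\beta_j})$ via Corollary \ref{vanshing on I coro}. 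One small difference worth noting: where the paper invokes Proposition \ref{I to W surj} to arrange that $F_\sigma$ is constant along the fibers of $\pi_{\mathscr I}$, you instead observe directly that $F$, being $d_{\pi'}$-closed, is a pullback from $D'$ and hence already constant along the $\pi_{\mathscr I}$-fibers since $\pi'=\pi_{D'}\circ\pi_{\mathscr I}$; this is a cleaner justification of the same fact.
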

\begin{proof}
Let $F_\sigma \in Z_{d_{\pi'}}(\mathscr W,L_{\mu'})$ such that $$\mathscr P([F_\sigma])=[F_\sigma \omega^\nca]=0$$ in $H^q_{DR}(\mathscr W,\Omega_{\pi}^\bullet(L_{\mu}))$.
Then there exists $\Psi\in \Gamma(\mathscr W,\Omega_{\pi}^{q-1}(L_{\mu}))$ such that 
$$F_\sigma \omega^\nca=d_\pi\Psi.$$
We need to show that $F_\sigma=0$.

Since $F_\sigma \omega^\nca$ has only term in $\wedge^q (\mf p_+^1)^*=\C\{\omega^\nca\}$, we see that $$[F_\sigma \omega^\nca]\in H^{q}_{DR}(\mathscr W, \pi_\mathscr{I}^*\Omega_{\pi_D}^\bullet(L_\mu)).$$
By Proposition \ref{I to W surj}, there exists $$[\Phi]\in H^q_{DR}(\mathscr I,\Omega_{\pi_D}^\bullet(L_\mu))$$ such that $\pi_{\mathscr I}^*[\Phi]=[F_\sigma \omega^\nca]$. Hence we can assume that $F_\sigma$ is constant along the fibers of $\pi_\mathscr{I}:\, \mathscr W\to \mathscr I$.

Let $\Psi=\Psi'+\Psi''$, where 
\begin{eqnarray*}
  \Psi'&=&\sum_{j}f_{j}\omega^{-\beta_1}\wedge \cdots \wedge \widehat{\omega^{-\beta_j}} \wedge\cdots \wedge \omega^{-\beta_q}; \\
   \Psi''&=&\sum_i g_i\omega^{-\alpha_{i}}\wedge \cdots +\sum_k h_k\omega^{-\gamma_{k}}\wedge \cdots.
\end{eqnarray*}
Then Lemma \ref{dpi omgega} implies that
\begin{eqnarray}
  F_\sigma \omega^\nca &\equiv_\pi& d_\pi \Psi'+d_\pi \Psi'' \nonumber\\
  &\equiv_\pi& \sum_j (-1)^{j-1}e_{-\beta_j}(f_j)\omega^\nca \label{dPsi}\\
   &&\pm\sum_{j\neq j'} f_{j}d_\pi\omega^{-\beta_{j'}}\wedge\omega^{-\beta_1}\wedge \cdots  \widehat{\omega^{-\beta_{j'}}} \cdots \widehat{\omega^{-\beta_j}}\wedge\cdots \nonumber \\ &&+\sum_{i,j}e_{-\alpha_i}(f_j)\omega^{-\alpha_i}\wedge\omega^{-\beta_1}\wedge \cdots \wedge \widehat{\omega^{-\beta_j}} \wedge\cdots \nonumber \\
  &&+\sum_{j,k}e_{-\gamma_k}(f_j)\omega^{-\gamma_k}\wedge\omega^{-\beta_1}\wedge \cdots \wedge \widehat{\omega^{-\beta_j}} \wedge\cdots + d_\pi \Psi''. \nonumber
\end{eqnarray}
From \eqref{dalpha} and \eqref{dgamma}, we see that $d_\pi \Psi''$ and
$$\pm\sum_{j\neq j'} f_{j}d_\pi\omega^{-\beta_{j'}}\wedge\omega^{-\beta_1}\wedge \cdots  \widehat{\omega^{-\beta_{j'}}} \cdots \widehat{\omega^{-\beta_j}}\wedge\cdots$$
have no terms in $\wedge^q (\mf p_+^1)^*=\C\{\omega^\nca\}$. Hence by comparing the types of both sides of \eqref{dPsi}, we have that
\begin{eqnarray}
 && F_\sigma \omega^\nca = \sum_j (-1)^{j-1}e_{-\beta_j}(f_j)\omega^\nca \triangleq \partial_{\mf p_+^1} \Psi',\label{dPsi type1}
 \end{eqnarray}
and 
\begin{eqnarray} 
 &&\pm\sum_{j\neq j'} f_{j}d_\pi\omega^{-\beta_{j'}}\wedge\omega^{-\beta_1}\wedge \cdots  \widehat{\omega^{-\beta_{j'}}} \cdots \widehat{\omega^{-\beta_j}}\wedge\cdots+ \nonumber\\
 && \sum_{i,j}e_{-\alpha_i}(f_j)\omega^{-\alpha_i}\wedge \cdots +\sum_{j,k}e_{-\gamma_k}(f_j)\omega^{-\gamma_k}\wedge\cdots + d_\pi \Psi''=0. \nonumber
\end{eqnarray}
Here we denote
$$\partial_{\mf p_+^1}(f\omega)=\sum_j e_{-\beta_j}(f)\omega^{-\beta_j}\wedge \omega$$
which is globally defined on $\mathscr W$.
%
%
%

Consider the fibrations in \eqref{comm2}
$$\xymatrix{
\mathscr W \ar@{^{(}->}[r] \ar[d]^-{\pi_{\mathscr I}} \ar@/_2pc/[dd]_-{\pi'}& {G_\C/T_\C} \ar[d]\\
\mathscr I\ar@{^{(}->}[r]\ar[d]^-{\pi_{\mathscr U}}& G_\C/B_K \ar[d]\\
\mathscr U=\mathbb B\times \bar{\mathbb B}\ar@{^{(}->}[r]& G_\C/K_\C
}$$
where $\mathbb B$ denotes the Hermitian symmetric domain $G_\R/K$, which is isomorphic to a boundede domain $B$ in $\mf p_-$ by the Harish-Chandra's embedding theorem, c.f. Lemma 7.11 in pages 390 -- 391 in \cite{Hel}.

Let $$U=U_1\times U_2 \subset \mathscr U$$ be an open subset of $\mathscr U$ with $U_1$ open subset of $\mathbb B$ and $U_2=\bar{U_1}\subset \bar{\mathbb B}$. We take $U$ small enough so that $$\pi_\mathscr{U}^{-1}(U)\cong K_\C/B_K\times U,\, {\pi'}^{-1}(U)=\pi_\mathscr{I}^{-1}\pi_\mathscr{U}^{-1}(U)\cong K_\C/T_\C\times U$$
and $\pi_\mathscr{U}^{-1}(U)\subset \mathscr I ^o$ is covered by the compact analytic subvarieties $\tilde{Z}_u$, $u\in U$, on which $\Gamma(\tilde{Z}_u, \pi_D^*L_{\mu_j})=0$ for line bundles $L_{\mu_j}$, $1\le j\le q$, which is to be defined below. Here $\mathscr I ^o$ and $\tilde{Z}_u$ are given as in the proof of Theorem \ref{vanshing on I prop}.

Then $\pi_{\mathscr I}$ is locally given by
$$\pi_{\mathscr I}|_{{\pi'}^{-1}(U)}=\pi_{B_K}\times \mathrm{id}_U:\,  K_\C/T_\C\times U\to K_\C/B_K\times U$$
where $$\pi_{B_K}:\, K_\C/T_\C \to K_\C/B_K$$ is the projection map with contractible fibers $B_K/T_\C\simeq \mf k_+$.

Note that all the homogenous vector bundles on the homogenous manifolds $X$ can be trivialized simultaneously under an open cover of $X$.
Let $\{V_\nu\}$ be a cover of the flag variety $Z_o\cong K_\C/B_K$ under which all the homogenous vector bundles on $K_\C/B_K$  are trivialized. Then, since $B_K/T_\C$ is contractible, $\{B_K/T_\C\times V_\nu\}$ is a cover of $K_\C/T_\C$ under which all the homogenous vector bundles on $K_\C/T_\C$ are trivialized.

Introduce coordinates 
\begin{eqnarray*}
&&  B_K/T_\C =\{w\in \mf k_+:\, w=(w_{1},\cdots, w_{d})\},\\
&&  V_\nu= \{x_\nu\in \mf k_-:\, x_\nu=(x_{\nu 1},\cdots, x_{\nu d})\}, \\
&&  U_1 = \{(y,z)\in B:\, y=(y_1,\cdots,y_q), z=(z_1,\cdots,z_r)\}\subset \mf p_-=\mf p_-^1\oplus \mf p_-^2
\end{eqnarray*}
 on the corresponding spaces, with  $B$ a bounded subset of $\C^{q+r}$. Then
$$\Psi'|_{B_K/T_\C\times V_\nu \times U}=\sum_{j}f_{j\nu}(w,x_\nu,y,\bar{y},z,\bar{z})\omega^{-\beta_1}\wedge \cdots \wedge \widehat{\omega^{-\beta_j}} \wedge\cdots \wedge \omega^{-\beta_q},$$
where $w\in B_K/T_\C$, $x_\nu \in V_\nu$ and $(y,\bar{y},z,\bar{z})\in U$, and 
$$e_{-\alpha_i}(f_{j\nu})=\partial_{w_i} (f_{j\nu}),\,e_{-\beta_{j'}}(f_{j\nu})=\partial_{\bar y_{j'}} (f_{j\nu}),\,e_{-\gamma_k}(f_{j\nu})=\partial_{\bar z_k} (f_{j\nu}).$$

Now we define $\Xi \in \Gamma({\pi'}^{-1}(U),\Omega_{\pi}^{q-1}(L_{\mu}))$ by
$$\Xi|_{B_K/T_\C\times V_\nu \times U}=\sum_{j}f_{j\nu}(0,x_\nu,y,\bar{y},z,\bar{z})\omega^{-\beta_1}\wedge \cdots \wedge \widehat{\omega^{-\beta_j}} \wedge\cdots \wedge \omega^{-\beta_q}.$$
Then \eqref{dPsi type1} implies that
\begin{eqnarray*}
  \partial_{\mf p_+^1} \Xi|_{B_K/T_\C\times V_\nu \times U}&= & \sum_j (-1)^{j-1}\partial_{\bar y_{j}}(f_{j\nu}(0,x_\nu,y,\bar{y},z,\bar{z})\omega^\nca \\
   &=& F_\sigma(0,x_\nu,y,\bar{y},z,\bar{z})\omega^\nca|_{B_K/T_\C\times V_\nu \times U}\\
  &=& F_\sigma(w,x_\nu,y,\bar{y},z,\bar{z}) \omega^\nca|_{B_K/T_\C\times V_\nu \times U}
\end{eqnarray*}
where the last equation follows from that $F_\sigma$ is constant along the fibers of $\pi_\mathscr I:\, \mathscr W\to \mathscr I$.
This implies that, when restricted to ${\pi'}^{-1}(U)$,
$$\partial_{\mf p_+^1}\Xi = F_\sigma\omega^\nca|_{{\pi'}^{-1}(U)}.$$

By construction, $\Xi$ is constant in $w\in B_K/T_\C$, i.e. constant along the fibers of $\pi_\mathscr I:\, \mathscr W\to \mathscr I$, and hence descends to $\pi_\mathscr{U}^{-1}(U)\subset \mathscr I$,
$$\Xi \in \bigoplus_{1\le j\le q}\Gamma(\pi_\mathscr{U}^{-1}(U), L_{\mu_j})$$
where $$\mu_j=\mu+\beta_1+\cdots +\widehat{\beta_j}+\cdots+\beta_q=\mu'-\beta_{j}.$$

By Borel-Weil-Bott theorem, condition \eqref{Pen inj conditions} implies that on the base cycle $Z_o$
$$\Gamma(Z_o, L_{\mu_j})=0,\, 1\le j\le q$$
and so does on its small deformations ${Z}_u$, $u\in U$.
Since $\pi_\mathscr{U}^{-1}(U)$ is covered by the compact analytic subvarieties $\tilde{Z}_u$, $u\in U$, which are isomorphic to $Z_u$ via the map $\pi_D$, we have that
$$\Gamma(\pi_\mathscr{U}^{-1}(U), L_{\mu_j})=0,\, 1\le j\le q.$$

Therefore $\Xi=0$ and $$F_\sigma \omega^\nca|_{{\pi'}^{-1}(U)}=\partial_{\mf p_+^1}\Xi=0$$
on the open subset ${{\pi'}^{-1}(U)}\subset \mathscr W$, which implies that $F_\sigma=0$ on $\mathscr W$. This proves the injectivity of the Penrose transformation.
\end{proof}

We need to consider the case that the Penrose transformation is non-trivial, i.e. $$H^0(D',L_{\mu'})\neq 0.$$ A necessary condition for this is given as follows.

\begin{theorem}\label{Pen nontrivial}
Let the notations be as Theorem \ref{mainD}. If $H^0(D',L_{\mu'})\neq 0$, then
\begin{equation}\label{Pen nontrivial conditions}
  (\mu',\alpha)\ge 0,\, \forall\, \alpha\in \Delta_+^\c.
\end{equation}
\end{theorem}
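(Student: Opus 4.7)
The plan is to reduce the question on $D'$ to a question on the base cycle $Z_o \cong K_\C/B_K$ and then invoke Borel--Weil--Bott. The key point is that $Z_o$ is a common compact complex subvariety of $D$ and $D'$ (since $\mathfrak{k}_-$ lies in both $\mathfrak{n}_-$ and $\mathfrak{n}'_-$), and that $G_\R$ acts transitively and holomorphically on $D'$, so the $G_\R$-translates $\{gZ_o : g\in G_\R\}$ form a covering of $D'$ by compact complex subvarieties, each biholomorphic to $Z_o$ via left multiplication.

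First, I would take a nonzero section $\sigma \in H^0(D', L_{\mu'})$. Since the translates $gZ_o$ cover $D'$, there must exist some $g\in G_\R$ with $\sigma|_{gZ_o} \neq 0$. Next, as in Step 1 of the proof of Theorem \ref{vanshing on I prop}, the biholomorphism $g^{-1} : gZ_o \to Z_o$ induced by the $G_\R$-action lifts to a $T$-equivariant isomorphism of homogeneous line bundles $L_{\mu'}|_{gZ_o} \xrightarrow{\cong} L_{\mu'}|_{Z_o}$, and pulling $\sigma|_{gZ_o}$ back through this isomorphism produces a nonzero element of
\[
H^0(Z_o, L_{\mu'}|_{Z_o}) = H^0(K_\C/B_K,\, K\times_T \C_{\mu'}).
\]

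Finally, I would appeal to the Borel--Weil--Bott theorem on $K_\C/B_K$, with positive compact root system $\Delta_+^\c$. This theorem is already used (in contrapositive form) in the paper: the vanishing $\Gamma(Z_o, L_\lambda|_{Z_o})=0$ is deduced precisely from the existence of $\alpha\in \Delta_+^\c$ with $(\lambda,\alpha)<0$. Taking the contrapositive with $\lambda=\mu'$, the nonvanishing of $H^0(Z_o, L_{\mu'}|_{Z_o})$ forces $(\mu',\alpha)\geq 0$ for every $\alpha\in \Delta_+^\c$, which is exactly the claim of Theorem \ref{Pen nontrivial}. No real obstacle arises — the argument is essentially a translation argument plus Borel--Weil on the compact flag variety $K_\C/B_K$; the only mild care needed is the verification that $Z_o$ is holomorphic inside $D'$ (which follows from $\mathfrak{k}_-\subset \mathfrak{n}'_-$) and that $G_\R$ acts by biholomorphisms with respect to the classical complex structure on $D'$ (which is built into Theorem \ref{new complex D}).
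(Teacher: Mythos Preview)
Your proposal is correct and follows essentially the same approach as the paper's proof: both use that the $G_\R$-translates $gZ_o$ cover $D'$, that left translation by $g$ induces an isomorphism $L_{\mu'}|_{Z_o}\cong L_{\mu'}|_{gZ_o}$, and then apply Borel--Weil--Bott on $Z_o\cong K_\C/B_K$. The only cosmetic difference is that the paper phrases the argument as a proof by contradiction (assuming some $(\mu',\alpha)<0$ and deducing $H^0(D',L_{\mu'})=0$), whereas you run the same logic in direct contrapositive form.
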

\begin{proof}
Assume on the contrary that there exists an $\alpha \in \Delta_+^\c$ such that 
\begin{equation}\label{D' nontrivial 1}
 (\mu',\alpha)<0.
\end{equation}

Note that
$$L_{\mu'}|_{Z_o}=K\times_T \C_{\mu'}$$
where $Z_o=K/T\subset D'$ is the base cycle through the base point.
Then from Borel-Weil-Bott theorem, we have that
$$H^0(Z_o, L_{\mu'}|_{Z_o})=H^0(Z_o, K\times_T \C_{\mu'})=0$$
under the assumption \eqref{D' nontrivial 1}.

For any $g\in G_\R$, $$g:\, L_{\mu'}|_{Z_o}\to L_{\mu'}|_{gZ_o}$$ induces an isomorphism of line bundles, and hence
$$H^0(gZ_o, L_{\mu'}|_{gZ_o})=0.$$
Since $D'$ is covered by the cycles $gZ_o$, $g\in G_\R$, we have that $H^0(D',L_{\mu'})= 0$, which contradicts to the assumption of the theorem. Hence \eqref{Pen nontrivial conditions} holds.
\end{proof}

From conditions \eqref{Pen inj conditions} and \eqref{Pen nontrivial conditions} in Theorem \ref{mainD} and Theorem \ref{Pen nontrivial}, it is reasonable to consider the weight $\mu'$ satisfying that
\begin{equation}\label{mu alpha =0}
  (\mu',\alpha)= 0,\, \forall\, \alpha\in \Delta_+^\c.
\end{equation}
Then the injectivity condition \eqref{Pen inj conditions} of the Penrose transformation is reduced to the conclusion of Lemma \ref{beta alpga >0} below. 

By the way, since $D$ is non-classical, the equations 
$$(\lambda, \alpha)= 0,\, \alpha\in \Delta_+^\c $$
define a linear complex subspace of $\mf h^*$ of codimension $\ge 1$. Hence the set $\Sigma$ of weights $\mu'$ satisfying \eqref{mu alpha =0} is non-empty and once $\mu'_0\in \Sigma$, then $k\mu'_0\in \Sigma$ for $k\in \mathbb Q$ such that $k\mu'_0$ is a weight.

\begin{theorem}\label{mainD example}
Let $D=G_\R/T$ be a non-classical flag domain with $G_\R$ of Hermitian type. Let $D'$ be the classical flag domain which is diffeomorphic to $D$.
If $\mu'$ is a weight such that
$$(\mu',\alpha)= 0,\, \forall\, \alpha\in \Delta_+^\c,$$
and $\mu=\mu'-2\rho_\nca$,
then the Penrose transformation $$\mathscr P:\, H^0(D',L_{\mu'})\to H^q(D,L_{\mu})$$ given by \eqref{PenD} is injective.
\end{theorem}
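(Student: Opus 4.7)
The plan is to deduce this as a direct corollary of Theorem \ref{mainD}: it suffices to verify that the injectivity hypothesis \eqref{Pen inj conditions} follows from the present weaker-looking hypothesis $(\mu',\alpha)=0$ for all $\alpha\in \Delta_+^\c$. Substituting that orthogonality into $(\alpha,\mu'-\beta)<0$ collapses it to the purely root-theoretic statement
$$ \forall\,\beta\in \Delta_+^\nca,\ \exists\,\alpha\in\Delta_+^\c \text{ such that } (\alpha,\beta)>0. $$
Once this is in place, the shift $\mu=\mu'-2\rho_\nca$ agrees with the one in Theorem \ref{mainD}, and the injectivity of $\mathscr P$ follows verbatim from that theorem applied to the same weights $(\mu,\mu')$.

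The entire content of the proof is thus the auxiliary combinatorial assertion (isolated as Lemma \ref{beta alpga >0} in the text). I would prove it by contradiction: suppose some $\beta_0\in\Delta_+^\nca$ satisfies $(\alpha,\beta_0)\le 0$ for every $\alpha\in\Delta_+^\c$. By the standard $\alpha$-string analysis this means $\beta_0-\alpha\notin\Delta$ for every $\alpha\in\Delta_+^\c$, so $\mathrm{ad}(e_{-\alpha})(e_{\beta_0})=0$ for all such $\alpha$, which expresses that $e_{\beta_0}$ is a lowest weight vector for the action of $K$ on $\mf p_-$.

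Now I would invoke the Hermitian-type structure: $\mf p_-$ is the holomorphic tangent space of the Hermitian symmetric domain $G_\R/K$ and is $K$-irreducible (reducing to simple factors of $G_\R$ if necessary), so its lowest weight is unique. On the other hand $\mf p_-^1=\mf p_-\cap \mf p'_+$ is a proper $\mf k_-$-submodule of $\mf p_-$ because $\mf p'_+$ is $\mf k$-invariant while $\mf p_-$ is not (cf. the proof of Lemma \ref{relations of the Lie brackets} and the decomposition $\mf p_-=\mf p_-^1\oplus\mf p_-^2$). A short case analysis on where the unique lowest weight of the irreducible $\mf k$-module $\mf p_-$ can sit, in conjunction with the decomposition $\mf p'_-=\mf p_+^1\oplus \mf p_-^2$ of the likewise $K$-irreducible $\mf p'_-$, then produces the desired contradiction.

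The only real obstacle is this last combinatorial lemma; the rest is essentially a substitution. I expect the cleanest way to finish it is to package the lowest-weight argument into the observation that the two opposite decompositions $\mf p=\mf p_-\oplus \mf p_+ = \mf p'_-\oplus \mf p'_+$ cannot simultaneously accommodate a root $\beta_0$ which is extremal for the compact Weyl group action unless $\beta_0$ is orthogonal to the entire compact part—and in the non-classical setting the intersection $\Delta_+^\nca$ is arranged precisely so that no such configuration can occur.
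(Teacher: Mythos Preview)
Your reduction to Theorem \ref{mainD} together with the root-theoretic Lemma \ref{beta alpga >0} is exactly the paper's strategy; the content is entirely in that lemma, and there your argument has real gaps. You assert that $\mf p_-$ is the holomorphic tangent space of $G_\R/K$ and is $K$-irreducible. In this paper's conventions that is false: the holomorphic tangent space of $G_\R/K$ is $\mf p'_-$, and precisely because $D$ is non-classical one has $\mf p_- \ne \mf p'_-$; the paper notes (proof of Lemma \ref{relations of the Lie brackets}) that $\mf p_-$ is preserved only by $\mf k_-$, so it is not even a $K$-submodule of $\mf p$, let alone irreducible. You have conflated $\mf p_-$ and $\mf p'_-$. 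Separately, the inference ``$(\alpha,\beta_0)\le 0$ for all $\alpha\in\Delta_+^\c$ implies $\beta_0-\alpha\notin\Delta$'' is not valid $\alpha$-string reasoning: it only yields $p\le q$ for the string $\beta_0-p\alpha,\dots,\beta_0+q\alpha$, not $p=0$. Even repairing the argument to work inside the genuinely $K$-irreducible module $\mf p'_+\supset\mf p_-^1$, your closing ``short case analysis'' is never carried out, and the $Sp(4)$ example shows the issue is delicate: with one of the two natural decompositions one has $e_1^*+e_2^*\in\Delta_+^\nca$ and $(e_1^*+e_2^*,\,e_1^*-e_2^*)=0$, so the lemma genuinely depends on the choice of $\Delta_+^{\nc,i}$.

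The paper's proof of the lemma is quite different and more hands-on: it first treats those $\beta\in\Delta_+^\nca$ for which some $\beta+\gamma$ (with $\gamma\in\Delta_+^\ncb$) is a compact root, via explicit root-string computations combined with Wolf's classification of Hermitian symmetric spaces (the $Sp(4)$ swap appears here); the remaining $\beta$ are handled by invoking Proposition~2.2 of \cite{LiuShen24}, the authors' resolution of a Green--Griffiths--Kerr conjecture. So Lemma \ref{beta alpga >0} is substantially harder than your outline suggests.
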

\begin{proof}
The theorem follows from Theorem \ref{mainD} and the following lemma.
\end{proof}

\begin{lemma}\label{beta alpga >0}
For any $\beta\in \Delta_+^\nca$, there exists an $\alpha\in \Delta_+^\c$ such that $(\beta,\alpha)>0$.
\end{lemma}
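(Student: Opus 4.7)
The plan is to argue by contradiction. Assume some $\beta\in\Delta_+^\nca$ satisfies $(\beta,\alpha)\le 0$ for every $\alpha\in\Delta_+^\c$; the goal is to contradict the standing setup that $D$ is genuinely non-classical, equivalently $\mf p_-^2\ne 0$, i.e.\ $\Delta_+^\ncb\ne\emptyset$.

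Since roots of distinct simple ideals of $\mf g$ are mutually orthogonal, I first reduce to the case that $\mf g$ is simple. In that case $\mf p_-'$ is an irreducible $\mf k$-module whose $\mf h$-weights are exactly ${\Delta'}_+^\nc$, and its unique dominant weight is the highest weight, call it $\xi\in{\Delta'}_+^\nc$. The assumption on $\beta$ says $-\beta$ is dominant for $\Delta_+^\c$; combined with $\beta\in\Delta_+^\nca$, which forces $-\beta\in{\Delta'}_+^\nc$, the weight $-\beta$ is a dominant weight of the irreducible $\mf k$-module $\mf p_-'$. By uniqueness, $-\beta=\xi$. Standard highest-weight theory then yields, for every $\delta\in{\Delta'}_+^\nc$, a relation
\begin{equation*}
\xi-\delta=\sum_{\alpha\in\Delta_+^\c} c_\alpha\,\alpha,\qquad c_\alpha\in\mathbb Z_{\ge 0}.
\end{equation*}

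Next, I would use $\beta=-\xi\in\Delta_+^\nca\subset\Delta_+$ to rewrite, for each $\delta\in{\Delta'}_+^\nc$,
\begin{equation*}
-\delta=(-\xi)+(\xi-\delta)=(-\xi)+\sum_{\alpha\in\Delta_+^\c} c_\alpha\,\alpha,
\end{equation*}
which displays $-\delta$ as a non-negative integer combination of elements of $\Delta_+$ in which the coefficient of $-\xi$ is $1$. Applying any generic linear functional defining $\Delta_+$ then shows the right-hand side is strictly positive on that functional; since $-\delta$ is a root, this forces $-\delta\in\Delta_+$. Hence $-{\Delta'}_+^\nc\subseteq\Delta_+$, so $\Delta_+^\ncb=\Delta_+\cap{\Delta'}_+^\nc=\emptyset$, giving $\mf p_-^2=0$ and $\mf p_-=\mf p_+'$. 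This identifies the complex structure of $D$ with the complex conjugate of that of $D'$, contradicting the standing setup in which $D$ is genuinely non-classical with $\mf p_-^2\ne 0$.

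The main delicate point is the positivity step: concluding $-\delta\in\Delta_+$ from the displayed identity. This is where the input $\beta\in\Delta_+^\nca$ (equivalently $-\xi\in\Delta_+$) is essential, and it is also the point at which the "strict" non-classicality of $D$ (namely $\mf p_-^2\ne 0$) is ultimately invoked to close the contradiction. The reduction to $\mf g$ simple is routine bookkeeping, but it is convenient because it lets me invoke the uniqueness of a dominant weight in $\mf p_-'$ directly.
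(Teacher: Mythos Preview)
Your argument has a genuine gap at the step ``By uniqueness, $-\beta=\xi$.'' It is not true that an irreducible $\mf k$-module has only one dominant weight; it has a unique \emph{highest} weight, but other weights can be dominant as well. For $G_\R=Sp(2n,\R)$ the module $\mf p'_-$ is (up to a twist) $\mathrm{Sym}^2(\mathbb C^n)$ as a $\mf{gl}_n$-module, whose weights are $e_i+e_j$ ($i\le j$); both $2e_1$ and $e_1+e_2$ are dominant with respect to the standard compact positive system. So from ``$-\beta$ is dominant'' you cannot conclude $-\beta=\xi$, and the chain of identities expressing every $-\delta$ as a nonnegative combination of positive roots collapses.

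Worse, the very conclusion you are aiming for---$\Delta_+^\ncb=\emptyset$---is false under your hypothesis in the $Sp(4)$ case. With $\Delta_+^\c=\{e_1^*-e_2^*\}$ and $\Delta_+^\nca=\{2e_1^*,\,e_1^*+e_2^*\}$ as in Example~\ref{Sp(2n,C)}, the root $\beta=e_1^*+e_2^*$ satisfies $(\beta,\alpha)=0$ for the unique $\alpha\in\Delta_+^\c$, yet $\Delta_+^\ncb=\{-2e_2^*\}\neq\emptyset$. Thus your contradiction never materializes, and indeed the lemma for this particular labeling fails at $\beta=e_1^*+e_2^*$. This is exactly why the paper's proof is not a one-line highest-weight argument: it splits into the case where $\beta+\gamma\in\Delta_+^\c$ for some $\gamma\in\Delta_+^\ncb$ (handled by explicit root-string and Dynkin-diagram analysis, with $Sp(4)$ disposed of by swapping $\Delta_+^\nca\leftrightarrow\Delta_+^\ncb$) and the complementary case (handled by invoking Proposition~2.2 of \cite{LiuShen24}). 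Your approach is cleaner in spirit but misses the non-minuscule cases entirely; to repair it you would at minimum need a separate argument for those $-\beta$ that are dominant but not highest, and for $Sp(4)$ you would still need the swap.
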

\begin{proof}
We first prove the lemma under the assumption that $G_\R$ is simple.

Let $$\Delta_+^{\nc,12}=\{\beta\in\Delta_+^\nca:\, \exists\, \gamma\in \Delta_+^\ncb,\text{ s.t. }\beta+\gamma\in\Delta_+^\c\}\subset \Delta_+^\nca.$$
We claim that for any $\beta\in \Delta_+^{\nc,12}$, there exists an $\alpha\in \Delta_+^\c$ such that $(\beta,\alpha)>0$.

Let $\gamma\in \Delta_+^\ncb$ such that $\beta+\gamma\in \Delta_+^\c$ is a root.
Then 
$$(\beta,\beta+\gamma)=(\beta,\beta)+(\beta,\gamma).$$
From Lemma \ref{Delta' positive}, we know that $(\beta,\gamma)\le 0$.

If $(\beta,\gamma)=0$, then $(\beta,\beta+\gamma)>0$, and we are done. Otherwise, $(\beta,\gamma)<0$, and $$\gamma+\beta,\cdots,\gamma+k\beta \text{ are roots, where } k=-\frac{2(\beta,\gamma)}{(\beta,\beta)}.$$
From Table 1 in Page 45 of \cite{Hum}, the possible cases are $k=1,2,3$.

If $k=1,3$, then $\gamma+k\beta\in \Delta_+^\c$ and $$(\beta, \gamma+k\beta)=(\beta,\gamma)+k(\beta,\beta)=\frac{k}{2}(\beta,\beta)>0.$$

The remaining case is $k=2$. Then $(\beta,\beta)=-(\beta,\gamma)$ and $2\beta+\gamma\in \Delta_+^\nca$. Since $\beta,\gamma \in \Delta_+$ and $(\beta,\gamma)<0$, we can choose another simple roots of $\Delta$ starting from $\{\beta,\gamma,\cdots\}$. Then according to the classifications of the Dynkin diagrams of the simple roots for Hermitian symmetric spaces in Lemma 4 of \cite{Wolf64}, this corresponds to type B and type C, since the Dynkin diagram does not depend on the choice of the simple roots.

In the case of type C, Lemma 4 of \cite{Wolf64} implies that the non-compact roots are of the form
$$\pm e_l\pm e_i,\, 1\le i\le l-1$$
where $l=\mathrm{rank }\,\mf g$. But then $(\beta,\beta)=(\gamma,\gamma)=2$ implies that $$k=-\frac{2(\beta,\gamma)}{(\beta,\beta)}=-\frac{2(\beta,\gamma)}{(\gamma,\gamma)}=1,$$
which is a contradiction.

Hence the case that $k=2$ corresponds only to type B. Then Lemma 4 of \cite{Wolf64} implies that we can choose a set of simple roots from $\Delta^\nc$. In particular, the set $\{\beta,\gamma,\cdots\}$ of simple roots chosen as above is a particular one, with the Dynkin diagram
$$\begin{tikzpicture}[
    scale=1.5,
    line width=1pt,
    baseline=(current bounding box.center),
    every node/.style={circle, draw, fill=white, inner sep=2pt},
    label distance=2mm
  ]

  \node (alpha1) at (0, 0) [label=below:] {};
  \node (alpha2) at (1, 0) [label=below:] {};
  \node (dots) at (2, 0) [draw=none] {$\cdots$};
    \node (alpha3) at (3, 0) [label=below:$\gamma'$] {};
  \node (alpha_n-1) at (4, 0) [label=below:$\beta$] {};
  \node (alpha_n) at (5, 0) [label=below:$\gamma$] {};

  \draw (alpha1) -- (alpha2);
  \draw (alpha2) -- (dots);
  \draw (dots) -- (alpha3);
  \draw (alpha3) -- (alpha_n-1);

    \draw[double distance=2pt, line width=0.8pt, 
          postaction={decorate},
          decoration={markings, mark=at position 0.5 with {\arrow[scale=0.8]{<}}}] 
          (alpha_n-1) -- (alpha_n);
\end{tikzpicture}$$

Now if $\Delta_+^\ncb=\{\gamma\}$, then we can replace $\Delta_+^\nca$ with $\Delta_+^\ncb$ so that $\gamma \in \Delta_+^\nca$ and satisfies $(\gamma,\beta+\gamma)>0$, since $(\gamma,\gamma)>(\beta,\beta)$. This corresponds to the case of $Sp(4)$.

Otherwise, there exists $\gamma'\in \Delta_+^\ncb$ such that $(\beta,\gamma')<0$ with $$k'=-\frac{2(\beta,\gamma')}{(\beta,\beta)}=1.$$ Then the above argument implies that $\beta+\gamma'\in \Delta_+^\c$ and $(\beta,\beta+\gamma')>0$.
This proves the lemma for $\beta \in\Delta_+^{\nc,12}$ when $G_\R$ is simple.

Let $\beta\in \Delta_+^\nca\setminus \Delta_+^{\nc,12}$. Then $\beta+\gamma$ is not a root for any $\gamma \in \Delta_+^\nc$, which implies that $$(\beta,\gamma)\ge 0,\,\forall\, \gamma\in \Delta_+^\nc.$$

Let $\lambda=-\beta$ and consider it as a weight in $\mf h^*$. 
The key point of solving the conjecture of Green-Griffiths-Kerr is Proposition 2.2 in \cite{LiuShen24}. It asserts that for the set $$\Delta_+=\Delta_+^\c\cup \Delta_+^\nc$$ of positive roos, which defines a non-classical flag domain, there exists $\alpha\in \Delta_+^\c$ such that $(\lambda,\alpha)<0$, or $\gamma\in \Delta_+^\nc$ such that $(\lambda,\gamma)>0$.

Now $(\lambda,\gamma)=-(\beta,\gamma)\le 0$ for any $\gamma\in \Delta_+^\nc$. Hence there exists $\alpha\in \Delta_+^\c$ such that $-(\beta,\alpha)=(\lambda,\alpha)<0$.

Therefore we prove the lemma when $G_\R$ is simple.

In general, the flag domain factors as $$D=D^1\times \cdots \times D^n$$ with $D^i=G^i_\R/T^i$ and $G_\R^i$ simple $1\le i\le n$. Then we can take $$\Delta_+^\nca = {\Delta^1}_+^\nca \cup \cdots \cup {\Delta^n}_+^\nca$$ where each 
${\Delta^i}_+^\nca$ satisfies the properties of the lemma.
\end{proof}
\begin{remark}\label{beta alpga >0 remark}
\textnormal{(1) 
Lemma \ref{beta alpga >0} is a significant result concerning the root systems of non-classical flag domains. It builds upon the classification of Hermitian symmetric spaces and leverages our recent resolution of the conjecture by Green, Griffiths, and Kerr in \cite{LiuShen24}.}

\textnormal{(2) From the proof of Lemma \ref{beta alpga >0}, we see that, except for the case of $Sp(4)$, $\Delta_+^\ncb$ still satisfies Lemma \ref{beta alpga >0}, i.e. any $\beta\in \Delta_+^\ncb$, there exists an $\alpha\in \Delta_+^\c$ such that $(\beta,\alpha)>0$. 
Hence if $D''$ is diffeomorphic to $D$ with the complex structure given by
$${\Delta''}_+=\Delta_+^\c\cup\Delta_+^\nca\cup(-\Delta_+^\ncb),$$
so that $D''$ maps holomorphically onto the conjugate of the Hermitian symmetric space $G_\R/K$, and the results of the Penrose transformation on $D''$ still holds.}
\end{remark}

In fact, we have proved a stronger version of Lemma \ref{beta alpga >0}.
\begin{lemma}\label{beta alpga >0'}
Suppose that the flag domain $D$ factors as
$$
D = D^1 \times \cdots \times D^n,
$$ 
where $D^i = G_\R^i / T^i$ with $G_\R^i$ simple and of Hermitian type and $G_\R^i\neq Sp(4,\R)$ for $1\le i\le n$.
Then for any $\beta\in \Delta_+^\nc$, there exists an $\alpha\in \Delta_+^\c$ such that $(\beta,\alpha)>0$.
\end{lemma}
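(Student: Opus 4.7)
The plan is to reduce to the case of one simple factor and then rerun the proof of Lemma \ref{beta alpga >0} with the roles of $\Delta_+^\nca$ and $\Delta_+^\ncb$ interchanged; the hypothesis $G_\R^i\neq Sp(4,\R)$ is used only to close the one genuinely exceptional subcase. Since $\Delta_+^\c$, $\Delta_+^\nc$ and the Killing form all split along the simple factors $D^1,\ldots,D^n$, it suffices to prove the lemma when $G_\R$ is simple and not isomorphic to $Sp(4,\R)$. Given $\beta\in\Delta_+^\nc$, Lemma \ref{beta alpga >0} already supplies the desired $\alpha\in\Delta_+^\c$ when $\beta\in\Delta_+^\nca$, so I focus on $\beta\in\Delta_+^\ncb$.

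I mirror the proof of Lemma \ref{beta alpga >0} by introducing the set
$$\Delta_+^{\nc,21}=\{\beta\in\Delta_+^\ncb:\,\exists\,\beta'\in\Delta_+^\nca,\ \beta+\beta'\in\Delta_+^\c\}$$
and splitting into two cases. For $\beta\in\Delta_+^{\nc,21}$, pick such a $\beta'\in\Delta_+^\nca$. Since $\beta\in{\Delta'}_+^\nc$ and $-\beta'\in{\Delta'}_+^\nc$, Lemma \ref{Delta' positive} forces $(\beta,\beta')\le 0$, and if equality holds then $(\beta,\beta+\beta')=(\beta,\beta)>0$ and we are done. Otherwise the same root-string argument as in Lemma \ref{beta alpga >0} produces the required $\alpha\in\Delta_+^\c$ when $k=-2(\beta,\beta')/(\beta,\beta)\in\{1,3\}$, and rules out $k=2$ in type C by a length comparison. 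In type B, an additional root $\beta''\in\Delta_+^\nca$ with $-2(\beta,\beta'')/(\beta,\beta)=1$ is extracted from Lemma~4 of \cite{Wolf64}, giving $\beta+\beta''\in\Delta_+^\c$ with $(\beta,\beta+\beta'')>0$; the existence of $\beta''$ fails only when both $\Delta_+^\nca$ and $\Delta_+^\ncb$ are singletons, which corresponds exactly to $Sp(4,\R)$ and is excluded by hypothesis.

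For $\beta\in\Delta_+^\ncb\setminus\Delta_+^{\nc,21}$, Lemma \ref{relations of the Lie brackets} gives $[\mf p_-^2,\mf p_-^2]=0$ and $[\mf p_-^1,\mf p_-^2]\subset\mf k_-$. Consequently $\beta+\delta\notin\Delta$ for every $\delta\in\Delta_+^\nc$: if $\delta\in\Delta_+^\ncb$ this is immediate, while if $\delta\in\Delta_+^\nca$ then $\beta+\delta$ could only be compact, which is precisely what the complement of $\Delta_+^{\nc,21}$ forbids. Hence $(\beta,\delta)\ge 0$ for all $\delta\in\Delta_+^\nc$. Applying Proposition~2.2 of \cite{LiuShen24} to the weight $\lambda=-\beta$ in the non-classical system $\Delta_+$, the alternative ``$\exists\,\gamma\in\Delta_+^\nc$ with $(\lambda,\gamma)>0$'' is excluded, so the other alternative furnishes $\alpha\in\Delta_+^\c$ with $(-\beta,\alpha)<0$, i.e.\ $(\beta,\alpha)>0$.

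The main obstacle is showing that the type-B obstruction in the $k=2$ subcase really reduces to $Sp(4,\R)$ after the swap. The key point is that the Dynkin diagram inspection of Lemma~4 of \cite{Wolf64} is symmetric in the two classes of noncompact roots, so failure to produce a second root of the required length forces both $\Delta_+^\nca$ and $\Delta_+^\ncb$ to degenerate to singletons of distinct lengths; this happens precisely at rank two, in $B_2=C_2=Sp(4,\R)$, which the hypothesis of the lemma removes.
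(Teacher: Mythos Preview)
Your proof is correct and follows exactly the approach the paper intends: the paper provides no separate proof of this lemma but derives it from the proof of Lemma~\ref{beta alpga >0} together with Remark~\ref{beta alpga >0 remark}(2), observing that the argument is symmetric in $\Delta_+^\nca$ and $\Delta_+^\ncb$ once the $Sp(4,\R)$ case (where the swap trick was needed) is excluded. Your explicit rerun of the argument with the roles interchanged is precisely this.

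One small inaccuracy in your final paragraph: in the $Sp(4,\R)$ example the sets are $\Delta_+^\nca=\{2e_1^*,\,e_1^*+e_2^*\}$ and $\Delta_+^\ncb=\{-2e_2^*\}$, so they are \emph{not} both singletons. The correct characterization of the obstruction is simply that the auxiliary simple root $\beta''$ (the neighbor of the short simple root on the single-bond side) fails to exist exactly when the rank is $2$, i.e.\ in $B_2=C_2$. Your conclusion that this is precisely $Sp(4,\R)$ is right; only the ``both singletons'' description should be replaced by ``rank two.''
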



\section{Penrose transformation on compact quotients of flag domains}\label{section Pen compact}
In this section, we introduce the Penrose transformation on the compact quotients of the flag domains and prove that it is an isomorphism under the conditions given in Section \ref{section Pen} and the conditions induced by Property \textbf{W} of Williams. We also apply our result to the groups of automorphic forms on Hermitian symmetric domains, which can be identified with the higher automorphic cohomology groups of certain line bundles on the compact quotient of the non-classical flag domain.

Let $\Gamma\subset G_\R$ be a discrete, co-compact and neat subgroup. Then
$X=\Gamma\backslash D $ and $X'=\Gamma\backslash D'$ are compact complex manifolds, and $\mathscr I_\Gamma=\Gamma\backslash \mathscr I$, $\mathscr U_\Gamma=\Gamma\backslash \mathscr U$ and $\mathscr W_\Gamma=\Gamma\backslash \mathscr W$ are complex manifolds. Moreover diagram \eqref{Wto DD'} induces the following commutative diagram
\begin{equation}\label{Wto DD' compact}
  \xymatrix{
&\mathscr W_\Gamma\ar[ldd]_-{\pi}\ar[rdd]^-{\pi'}\ar[d]&\\
&\mathscr I_\Gamma\ar[ld]^-{\pi_{X}}\ar[rd]_-{\pi_{X'}}\\
X& &X'.}
\end{equation}

The homogenous line bundles $L_{\mu}=G_\R\times_T \C_{\mu}$ and $L_{\mu'}=G_\R\times_T \C_{\mu'}$ on $D$ and $D'$ are $G_\R$-invariant, and hence desecnd to the line bundles on $X$ and $X'$ respectively. We still denote them by $L_{\mu}$ and $L_{\mu'}$ for convenience. 

The $d_\pi$-closed form
$$\omega^\nca=\omega^{-\beta_1}\wedge \cdots \wedge \omega^{-\beta_q}\in \Gamma(\mathscr W, \Omega_\pi^q(L_{-2\rho_\nca}))$$
is left invariant, and descends to a form on $\mathscr W_\Gamma$ which we denote by 
$$\omega^\nca_\Gamma\in \Gamma(\mathscr W_\Gamma, \Omega_\pi^q(L_{-2\rho_\nca})).$$

In this section, we mainly consider the automorphic cohomology groups $H^*(X,L_\mu)$ and $H^*(X',L_{\mu'})$. From Page 236 of \cite{GGK}, we know that $\mathscr W_\Gamma=\Gamma\backslash \mathscr W$ is also Stein. Applying Theorem \ref{EGW} of EGW again, we have that
\begin{eqnarray*}
  H^*(X,L_\mu) &\cong & H^*_{DR}(\mathscr W_\Gamma,\Omega_{\pi}^\bullet(L_{\mu})), \\
  H^*(X',L_{\mu'})&\cong & H^*_{DR}(\mathscr W_\Gamma,\Omega_{\pi'}^\bullet(L_{\mu'})).
\end{eqnarray*}

It is important to note that the arguments for the main results in Section \ref{section Pen} are local. Consequently, we introduce the following definition of the Penrose transformation $\mathscr{P}$ on the compact quotients of flag domains and present the corresponding theorem establishing the injectivity of $\mathscr{P}$.

\begin{lemdef}
Let $\mu$ and $\mu'$ be two weights such that $\mu+\rho=\mu'+\rho'$.
Then the Penrose transformation on automorphic cohomology groups $$\mathscr P:\, H^0(X',L_{\mu'})\to H^q(X,L_{\mu})$$ is defined by
\begin{equation}\label{PenD compact}
  \xymatrix{
  H^0(X',L_{\mu'}) \ar[d]^-{\cong}\ar[rr]^-{\mathscr P}&& H^q(X,L_{\mu})\ar[d]^-{\cong}\\
  H^0_{DR}(\mathscr W_\Gamma,\Omega_{\pi'}^\bullet(L_{\mu'}))\ar[rr]^-{\omega^\nca_\Gamma}&&H^q_{DR}(\mathscr W_\Gamma,\Omega_{\pi}^\bullet(L_{\mu})).
  }
\end{equation}
\end{lemdef}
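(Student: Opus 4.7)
The statement is the compact-quotient counterpart of Lemma-Definition \ref{Pen defn}, and my plan is to deduce it by direct descent from that earlier lemma, checking that every ingredient of the non-compact argument is $\Gamma$-equivariant and hence passes to the quotient $\mathscr W_\Gamma = \Gamma\backslash \mathscr W$.

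First I would justify the two vertical isomorphisms in \eqref{PenD compact}. Since $\Gamma$ is discrete, cocompact and neat, its action on $\mathscr W$ is free and properly discontinuous, so $\mathscr W_\Gamma$ is a complex manifold and the projections $\pi\colon\mathscr W_\Gamma\to X$, $\pi'\colon\mathscr W_\Gamma\to X'$ have the same fibers as their $\Gamma$-covers, which are contractible by the description after diagram \eqref{comm2}. Moreover $\mathscr W_\Gamma$ is Stein (as recalled from \cite{GGK}, p.~236). Hence Theorem \ref{EGW} applies to both $\pi$ and $\pi'$ on $\mathscr W_\Gamma$, yielding the two isomorphisms identifying automorphic cohomology with de Rham cohomology of relative forms.

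Next I would check that the horizontal arrow makes sense. The form $\omega^\nca = \omega^{-\beta_1}\wedge \cdots \wedge \omega^{-\beta_q}$ is left $G_\C$-invariant on $\check{\mathscr W}$, so in particular invariant under $\Gamma\subset G_\R$; therefore it descends to $\omega^\nca_\Gamma \in \Gamma(\mathscr W_\Gamma, \Omega_\pi^q(L_{-2\rho_\nca}))$, and the $\Gamma$-equivariance of $d_\pi$ together with Lemma \ref{omega nc1} gives $d_\pi \omega^\nca_\Gamma \equiv_\pi 0$. For $F \in H^0_{DR}(\mathscr W_\Gamma, \Omega_{\pi'}^\bullet(L_{\mu'}))$, i.e.\ a section with $d_{\pi'}F = 0$, the relation $\mu + \rho = \mu' + \rho'$ is equivalent to $\mu = \mu' - 2\rho_\nca$, so the product $F\omega^\nca_\Gamma$ is a section of $\Omega_\pi^q(L_\mu)$. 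To verify that it is $d_\pi$-closed, I would pull back to $\mathscr W$ and repeat verbatim the computation of Lemma-Definition \ref{Pen defn}: since the complex structures of $D$ and $D'$ differ only along $\mf p_\pm^1$, one has $(d_\pi - d_{\pi'})F \equiv_\pi \sum_j a_j \omega^{-\beta_j}$, and wedging with $\omega^\nca$ annihilates each of these terms, giving
\begin{equation*}
d_\pi(F\omega^\nca) \equiv_\pi (d_\pi - d_{\pi'})F \wedge \omega^\nca + F\, d_\pi \omega^\nca \equiv_\pi 0.
\end{equation*}
This identity is $\Gamma$-invariant and so descends to $d_\pi(F\omega^\nca_\Gamma) \equiv_\pi 0$ on $\mathscr W_\Gamma$, producing a well-defined class in $H^q_{DR}(\mathscr W_\Gamma, \Omega_\pi^\bullet(L_\mu))$. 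Passage to cohomology on the source side is unproblematic because $H^0_{DR}$ has no exact forms to mod out by.

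I do not anticipate a genuine obstacle: the construction is functorial in the correspondence space, and every step of the non-compact argument is local on $\mathscr W$ and invariant under the left $\Gamma$-action, so descent is essentially automatic. The only nontrivial external input is that $\mathscr W_\Gamma$ is Stein (needed for EGW), which is not proved here but imported from \cite{GGK}.
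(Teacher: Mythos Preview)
Your proposal is correct and follows essentially the same approach as the paper: the paper does not give a separate proof of this lemma-definition, merely remarking that ``the arguments for the main results in Section~\ref{section Pen} are local'' and hence descend to the quotient, which is exactly what you spell out in detail. Your write-up is in fact more complete than the paper's, explicitly verifying the EGW hypotheses on $\mathscr W_\Gamma$ and the $\Gamma$-equivariance of each ingredient.
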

Similar to Theorem \ref{mainD}, we have the following theorem.
\begin{theorem}\label{mainD compact1}
Let $D=G_\R/T$ be a non-classical flag domain with $G_\R$ of Hermitian type. Let $D'$ be the classical flag domain which is diffeomorphic to $D$.
Let $X=\Gamma\backslash D$ and $X'=\Gamma\backslash D'$ be their corresponding  quotients by a discrete, co-compact and neat subgroup $\Gamma\subset G_\R$.

If $\mu$ and $\mu'$ are two weights with $\mu+\rho=\mu'+\rho'$, and satisfy that for any $\beta\in\Delta_+^\nca$ there exists $\alpha_\beta\in \Delta_+^\c$ such that 
\begin{equation}
  (\alpha_\beta,\mu'-\beta)<0, \tag{\ref{Pen inj conditions}}
\end{equation}
Then the Penrose transformation on automorphic cohomology groups $$\mathscr P:\, H^0(X',L_{\mu'})\to H^q(X,L_{\mu})$$ given by \eqref{PenD compact} is injective.
\end{theorem}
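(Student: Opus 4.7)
The plan is to reduce Theorem \ref{mainD compact1} to Theorem \ref{mainD} by pulling back to the universal $\Gamma$-cover $\mathscr{W} \to \mathscr{W}_\Gamma$ and exploiting the local nature of the proof in Section \ref{section Pen}. Given $[\sigma'] \in H^0(X', L_{\mu'})$ with $\mathscr{P}([\sigma']) = 0$, the EGW isomorphism identifies $\sigma'$ with a section $F_\Gamma \in \mathcal{Z}_{d_{\pi'}}(\mathscr{W}_\Gamma, L_{\mu'})$; pulling back gives a $\Gamma$-equivariant $F \in \mathcal{Z}_{d_{\pi'}}(\mathscr{W}, L_{\mu'})$ corresponding to a $\Gamma$-invariant holomorphic section $\tilde\sigma' \in H^0(D', L_{\mu'})$. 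The hypothesis $[F_\Gamma \omega^\nca_\Gamma] = 0$ furnishes $\Psi_\Gamma \in \Gamma(\mathscr{W}_\Gamma, \Omega_\pi^{q-1}(L_\mu))$ with $F_\Gamma \omega^\nca_\Gamma = d_\pi \Psi_\Gamma$, and its pullback $\Psi \in \Gamma(\mathscr{W}, \Omega_\pi^{q-1}(L_\mu))$ satisfies $F \omega^\nca = d_\pi \Psi$ on $\mathscr{W}$, using that $\omega^\nca$ is left-invariant and hence descends to $\omega^\nca_\Gamma$.

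Once the equation $F\omega^\nca = d_\pi \Psi$ is available on $\mathscr{W}$, I would apply the argument of Theorem \ref{mainD} verbatim. None of its ingredients involve $\Gamma$: Proposition \ref{I to W surj} lets us assume $F$ is constant along the fibers of $\pi_\mathscr{I}$; Lemma \ref{dpi omgega} provides the type decomposition of $d_\pi \Psi$; and the vanishing theorem \ref{vanshing on I prop} on $\mathscr{I}^o$ is purely geometric on the cycle space. In the local trivialization $\pi_\mathscr{U}^{-1}(U) \cong K_\C/B_K \times U$ over a small neighborhood $U$ of the base cycle, the auxiliary section $\Xi$ is built from the projection of $\Psi$ onto $\wedge^{q-1}(\mf p_+^1)^*$, is constant along the fibers of $\mathscr{W} \to \mathscr{I}$, and hence descends to a section of the line bundles $L_{\mu'-\beta_j}$ on $\pi_\mathscr{U}^{-1}(U) \subset \mathscr{I}$. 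Condition \eqref{Pen inj conditions} together with Bott--Borel--Weil on the cycles $\tilde Z_u$ forces $\Xi = 0$, whence $F\omega^\nca|_{{\pi'}^{-1}(U)} = \partial_{\mf p_+^1}\Xi = 0$ and $F$ vanishes on the open set ${\pi'}^{-1}(U) \subset \mathscr{W}$.

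To conclude, since $F$ is the pullback of the holomorphic section $\tilde\sigma'$ on the connected complex manifold $D'$ and the submersion $\pi'$ is open, $\tilde\sigma'$ vanishes on the open subset $\pi'({\pi'}^{-1}(U)) \subset D'$, hence $\tilde\sigma' \equiv 0$ by analytic continuation and $[\sigma'] = 0$ in $H^0(X', L_{\mu'})$. The only step requiring care is verifying that pulling $\Psi_\Gamma$ back to $\mathscr{W}$ preserves all the structures the proof of Theorem \ref{mainD} requires, but this is automatic: $\Gamma \subset G_\R$ acts by biholomorphisms compatible with every fibration in diagram \eqref{comm2} and with every homogeneous line bundle involved, so the pulled-back identity $F\omega^\nca = d_\pi \Psi$ is valid globally on $\mathscr{W}$. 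The main obstacle, namely the vanishing theorem on $\mathscr{I}^o$ and the coordinate construction of $\Xi$, has already been overcome in Section \ref{section Pen}, so the compact quotient case introduces no new analytic difficulty beyond this routine lifting step.
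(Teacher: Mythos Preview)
Your proposal is correct and follows essentially the same approach as the paper, which simply remarks that the arguments of Section~\ref{section Pen} are local and therefore carry over unchanged to the quotient $\mathscr W_\Gamma$. Your pullback to the universal cover $\mathscr W$ makes this explicit and is a perfectly valid way to implement the paper's one-line justification.
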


As examples of the conditions in Theorem \ref{mainD compact1}, we provide the following theorem.
\begin{theorem}\label{mainD compact example}
Let the notations be as Theorem \ref{mainD compact1}.
If $\mu$ and $\mu'$ are two weights such that $\mu+\rho=\mu'+\rho'$ and 
$$(\mu',\alpha)= 0,\, \forall\, \alpha\in \Delta_+^\c.$$
Then the Penrose transformation on automorphic cohomology groups $$\mathscr P:\, H^0(X',L_{\mu'})\to H^q(X,L_{\mu})$$ given by \eqref{PenD compact} is injective.
\end{theorem}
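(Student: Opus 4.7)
The plan is to deduce Theorem \ref{mainD compact example} as a direct corollary of Theorem \ref{mainD compact1} by verifying that the hypothesis $(\mu',\alpha)=0$ for all $\alpha\in\Delta_+^\c$ already forces the injectivity condition \eqref{Pen inj conditions}. In other words, no new geometric or analytic argument is needed beyond what has been set up on $\mathscr W_\Gamma$ and $\mathscr I_\Gamma$ in the preceding section; the entire content is purely a root-system check.

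Concretely, I would fix an arbitrary $\beta\in\Delta_+^\nca$ and seek the required $\alpha_\beta\in\Delta_+^\c$ with $(\alpha_\beta,\mu'-\beta)<0$. Lemma \ref{beta alpga >0} produces some $\alpha_\beta\in\Delta_+^\c$ with $(\beta,\alpha_\beta)>0$ (this is the structural root-system lemma whose proof rests on Wolf's classification of Hermitian symmetric spaces and on the resolution of the Green--Griffiths--Kerr vanishing conjecture from \cite{LiuShen24}). Using bilinearity together with the hypothesis $(\mu',\alpha_\beta)=0$, I would then compute
\[
(\alpha_\beta,\mu'-\beta)=(\alpha_\beta,\mu')-(\alpha_\beta,\beta)=-(\alpha_\beta,\beta)<0.
\]
Since $\beta\in\Delta_+^\nca$ was arbitrary, condition \eqref{Pen inj conditions} is verified, and Theorem \ref{mainD compact1} delivers the injectivity of
\[
\mathscr P:\,H^0(X',L_{\mu'})\to H^q(X,L_\mu).
\]

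There is essentially no obstacle in this step: the compact-quotient analogue of Theorem \ref{mainD example} is built in precisely the same way as its non-compact counterpart, since all the real work — namely, constructing $\Xi$ from $\Psi'$, descending it to the incidence variety $\mathscr I_\Gamma$, and invoking the vanishing theorem on $\mathscr I^o$ — has already been absorbed into Theorem \ref{mainD compact1} via the local nature of the arguments in Section \ref{section Pen}. What remains is thus a one-line deduction from Lemma \ref{beta alpga >0}, exactly parallel to the derivation of Theorem \ref{mainD example} from Theorem \ref{mainD}.
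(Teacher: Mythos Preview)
Your proposal is correct and matches the paper's approach exactly: the paper presents Theorem \ref{mainD compact example} as an immediate consequence of Theorem \ref{mainD compact1} together with Lemma \ref{beta alpga >0}, precisely parallel to how Theorem \ref{mainD example} is derived from Theorem \ref{mainD}. The verification $(\alpha_\beta,\mu'-\beta)=-(\alpha_\beta,\beta)<0$ is the entire content, as you identified.
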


We are interested in the case that the Penrose transformation 
\begin{equation}\label{Pen isom}
  \xymatrix{\mathscr P:\, H^0(X',L_{\mu'})\ar[r]^-{\cong}& H^q(X,L_{\mu})}
\end{equation}
is an isomorphism. Under Theorem \ref{mainD compact1}, \eqref{Pen isom} is reduced to proving that 
\begin{equation}\label{dimX=dimX'}
  \dim_\C H^0(X',L_{\mu'})=\dim_\C H^q(X,L_{\mu}).
\end{equation}

To get a natural condition for \eqref{dimX=dimX'}, we introduce the work of Schmid, Griffiths, Williams and et al. on the geometric realizations of representations of semisimple real Lie groups. Good references for this survey are Schmid's paper \cite{schmid97} and Lecture 5 and Lecture 9 of \cite{GGK}. One is referred to \cite{knapp86} for the basic notions in this area and more details.

First we fix some notations. Let $\widehat{G}_\R$ be the set of equivalence classes of irreducible unitary representations 
$$\pi:\, G_\R \to \text{Aut}(\tilde V_\pi)$$ of $G_\R $ on a Hilbert space $\tilde V_\pi$. Then the subspace $V_\pi=\tilde V_{\pi,K-\mathrm{finite}}$ of $K$-finite vectors in $\tilde V_\pi$ is a Harish-Chandra module, which is called the Harish-Chandra module associated to the unitary representation $\tilde V_\pi$. We also denote the unitary representation $ \tilde V_\pi$ and its associated Harish-Chandra module  $V_\pi$ by $\tilde V_\zeta$ and $V_\zeta$ respectively, if $V_\pi$ has infinitesimal character $\chi_\zeta$.

The $L^2$-cohomology group $H_{(2)}^k(D,L_\mu)$ of the homogeneous line bundle $L_\mu$ on $D$ can be realized as 
$$H_{(2)}^k(D,L_\mu) =\int_{\widehat{G}_\R}\tilde{V_\pi}\widehat{\otimes}H^k(\mf n_+,\tilde{V_\pi}^*)_{-\mu}$$
via the Plancherel decomposition 
$$L^2(G_\R)= \int_{\widehat{G}_\R}\tilde{V_\pi}\widehat{\otimes}\tilde{V_\pi}^*d\pi,$$
where $\tilde{V_\pi}^*$ is the dual space of $\tilde{V_\pi}$, and $d\pi$ is the Plancherel measure which assigns positive mass to the discrete series representations. Here $H^k(\mf n_+,\tilde{V_\pi}^*)$ is the $\mf n$-cohomology and $H^k(\mf n_+,\tilde{V_\pi}^*)_{-\mu}$ is the set of elements in $H^k(\mf n_+,\tilde{V_\pi}^*)$ on which $\mf h$ acts by the weight $-\mu$.
 
The celebrated work of Schmid on the solution to the Langlands conjecture can be summarized as follows.

\begin{theorem}[Schmid]\label{Schmid thm}
Let $L_\mu$ be homogeneous line bundle on the flag domain $D=G_\R/T$ defined by a weight $\mu$. Then
we have the following realization of the representation of $G_\R$.

(1) When $\mu+\rho$ singular, i.e. $\exists\, \alpha\in \Delta_+$ s.t. $(\mu+\rho,\alpha)=0$, the $L^2$-cohomology group $H_{(2)}^k(D,L_\mu)$ is zero;

(2) When $\mu+\rho$ is regular, i.e. $(\mu+\rho,\alpha)\neq 0$ for any $\alpha\in \Delta_+$, the $L^2$-cohomology group $H_{(2)}^k(D,L_\mu)$ is zero if $k\neq q(\mu+\rho)$, and $H_{(2)}^{q(\mu+\rho)}(D,L_\mu)$ is a discrete series representation of $G_\R$, whose associated Harish-Chandra module has infinitesimal character $\chi_{\mu+\rho}$;

(3) When $\mu+\rho$ belongs to the anti-dominant Weyl chamber, the cohomology group $H^d(D,L_\mu)$ is a Harish-Chandra module with infinitesimal character $\chi_{\mu+\rho}$, where $d=\dim_\C Z_o$.
\end{theorem}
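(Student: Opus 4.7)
The plan is to follow Schmid's original strategy and combine Hodge theory on the homogeneous line bundle $L_\mu \to D$ with the Parthasarathy identity for the Laplacian. I would first realize the $L^2$-cohomology $H_{(2)}^k(D,L_\mu)$ as the space of square-integrable $L_\mu$-valued harmonic $(0,k)$-forms on $D$, using that $D = G_\R/T$ carries a $G_\R$-invariant Hermitian metric making $L_\mu$ into a Hermitian holomorphic bundle. The relevant Laplacian $\Box_k = \bar\partial\bar\partial^* + \bar\partial^*\bar\partial$ can then be rewritten, by the Parthasarathy formula, as $\Box_k = -\Omega + C(\mu+\rho,k)$, where $\Omega$ is the Casimir acting on sections of $\wedge^k \mf n_+^* \otimes L_\mu$ and $C(\mu+\rho,k)$ is an explicit constant on each $T$-isotypic component, depending on $\mu+\rho$ and on which subset of $\Delta_+$ indexes that component.

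For part (1) and the off-degree vanishing in part (2), the key step is to show that when $\mu+\rho$ is singular, or when $\mu+\rho$ is regular but $k \neq q(\mu+\rho)$, the operator $\Box_k$ has a strictly positive lower bound on its $L^2$-domain. This reduces to a combinatorial check of weights: one verifies that $C(\mu+\rho,k)$ is bounded below by a strictly positive constant on every $T$-type appearing in $\wedge^k \mf n_+^*$, using that the integer $q(\mu+\rho)$ counts exactly the degree in which the constant $C(\mu+\rho,k)$ can vanish along the unique sign pattern compatible with $\mu+\rho$. Positivity of $\Box_k$ on the dense subspace of smooth compactly supported sections then forces the harmonic space, hence the $L^2$-cohomology, to vanish.

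For the nontrivial case in part (2) I would invoke the $L^2$-index theorem of Atiyah-Schmid together with Harish-Chandra's classification of the discrete series by their Harish-Chandra parameters: the global character of $G_\R$ on $H_{(2)}^{q(\mu+\rho)}(D,L_\mu)$ can be computed from the index formula and matched with Harish-Chandra's discrete series character attached to $\mu+\rho$, which in turn has infinitesimal character $\chi_{\mu+\rho}$ via the Harish-Chandra isomorphism. For part (3), I would pass from $L^2$-sections to the sheaf-cohomological $H^d(D,L_\mu)$: when $\mu+\rho$ lies in the anti-dominant chamber, a Kodaira-type vanishing along the compact fibers $Z_u$ combined with Serre duality forces cohomology to concentrate in top degree $d=\dim_\C Z_o$; the resulting space inherits a $(\mf g,K)$-module structure by transport of the $G_\R$-action to its $K$-finite vectors, and the Casimir acts by the scalar of $\chi_{\mu+\rho}$ by the same Parthasarathy computation used above.

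The main obstacle will be the positivity estimate in the second step, sharp enough to control not merely smooth harmonic forms but $L^2$-harmonic forms on the noncompact manifold $D$. This requires the full elliptic regularity apparatus on a homogeneous space of a noncompact real form, together with a careful accounting that the harmonic space so produced is exhausted by $K$-finite vectors. Matching the resulting unitary representation with a specific discrete series, rather than an arbitrary one having the prescribed infinitesimal character, is the other delicate point and ultimately depends on the full strength of Harish-Chandra's character theory; the anti-dominant statement in (3) then follows by a translation-functor argument transporting the regular case across walls, which is itself nontrivial at singular parameters.
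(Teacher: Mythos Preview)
The paper does not prove this theorem: it is stated as a summary of Schmid's classical results, with references to \cite{schmid97} and to Lectures 5 and 9 of \cite{GGK}, and is used only as background input for the computation of dimensions of automorphic cohomology groups. There is therefore no ``paper's own proof'' to compare against.

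Your outline is a reasonable sketch of the standard approach to these results, but a few points deserve sharpening. The Parthasarathy-type formula you invoke in this setting is more subtle than on a Riemannian symmetric space: here $D = G_\R/T$ is not K\"ahler in general, and the relevant identity (due to Schmid, building on Okamoto--Ozeki and Griffiths--Schmid) takes a more complicated form than a single constant shift of the Casimir; the constants depend on which subset of $\Delta_+$ indexes the $\mf n_+^*$-component, and the positivity argument requires a careful weight-by-weight analysis rather than a single $C(\mu+\rho,k)$. Your invocation of the Atiyah--Schmid $L^2$-index theorem for the identification of the nonzero $L^2$-cohomology with a specific discrete series is anachronistic: Schmid's original proof predates that theorem and proceeds instead via asymptotics of matrix coefficients and comparison with Harish-Chandra's character formula. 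Finally, for part (3), the passage from $L^2$-cohomology to ordinary sheaf cohomology $H^d(D,L_\mu)$ is not a matter of Kodaira vanishing along cycles but rather of Schmid's identification theorem relating the maximal globalization of the Harish-Chandra module to Dolbeault cohomology; the anti-dominant case is handled directly by $\mf n$-cohomology computations, not by a translation-functor limit from the regular case.
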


The automorphic cohomology $H^k(X, L_\mu)$ on the compact quotient $X=\Gamma\backslash D$ can be realized as the $\mf n$-cohomology:
$$H^k(X, L_\mu) = \bigoplus_{\pi \in \widehat{G}_\R} H^k(\mf n_+, V_\pi)_{-\mu}^{\oplus m_\pi(\Gamma)}$$
where $m_\pi(\Gamma)$ is the multiplicity of the associated Harish-Chandra module $V_\pi$ in $ L^2(\Gamma \backslash G_\R)$.
Therefore $\dim_\C H^k(X, L_\mu)$ is determined by $$\dim_\C H^k(\mf n_+, V_\pi)^{\oplus m_\pi(\Gamma)}, \ \ \pi \in \widehat{G}_\R.$$

For the $\mf n$-cohomology, we have the following Williams Lemma from \cite{Wi1} as introduced in Lecture 8 of \cite{GGK}.

\begin{lemma}[Williams Lemma]\label{W lemma}
Let $\tilde V$ be an irreducible unitary representation of $G_\R$ and $V$ be its associated Harish-Chandra module. If $ \mu $ is a weight satisfying
\begin{enumerate}
    \item $\mu + \rho$ is regular;
    \item Property \textbf{W}: For each $ \beta \in \Delta^\nc $ with $ (\mu + \rho, \beta) > 0 $, we have that
    $$
    \left( \mu + \rho - \frac{1}{2} \sum_{\begin{subarray}{c}\alpha \in \Delta\\ (\mu + \rho, \alpha) > 0\end{subarray}}  \alpha,\, \beta\right)  > 0.
    $$
\end{enumerate}
Then $H^k(\mf n_+, V)_{-\mu} \neq 0$ implies that 
$$\begin{cases}
    k = q(\mu + \rho) \\
    \dim_\C H^k(\mf n_+, V)_{-\mu} = 1\\
    \tilde V= \tilde V_{-(\mu + \rho)}
\end{cases}$$
where $\tilde V_{-(\mu + \rho)}$ is a discrete series representation with infinitesimal character $ \chi_{-(\mu + \rho)}$.
\end{lemma}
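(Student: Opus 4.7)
The plan is to apply the Casselman--Osborne theorem, which asserts that the center $Z(\mathfrak{g})$ of $U(\mathfrak{g})$ acts on $H^*(\mathfrak{n}_+, V)$ compatibly with the $\mathfrak{h}$-action via the Harish-Chandra homomorphism. Since $V$ is irreducible, $Z(\mathfrak{g})$ acts by a single infinitesimal character $\chi_\zeta$. If $H^k(\mathfrak{n}_+, V)_{-\mu}\neq 0$, then $-\mu+\rho$ must be Weyl-conjugate to $\zeta$, so $\chi_\zeta=\chi_{\mu+\rho}$. Because $\mu+\rho$ is regular, Harish-Chandra's theorem guarantees that there are only finitely many irreducible $(\mathfrak{g},K)$-modules with this infinitesimal character, and the candidate infinitesimal character is determined uniquely.

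Next, I would invoke the Hecht--Schmid $\mathfrak{n}$-cohomology calculation to pin down which of these finitely many irreducible modules $V$ can carry the weight $-\mu$ in $\mathfrak{n}_+$-cohomology and in which degree. For an irreducible Harish-Chandra module with regular infinitesimal character $\chi_{\mu+\rho}$, every weight appearing in $H^\bullet(\mathfrak{n}_+, V)$ is of the form $w(\mu+\rho)-\rho$ for some $w$ in the Weyl group, and the cohomological degree in which such a weight can occur is constrained by the length of $w$ relative to the positive system $\Delta_+$. This reduces the problem to a finite combinatorial check: for which $w$ with $w(\mu+\rho)-\rho=-\mu$ can a given irreducible $V$ support such a weight in $H^k(\mathfrak{n}_+, V)$?

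The key step is now to apply Property \textbf{W}. The positivity condition
\[
\Bigl(\mu+\rho-\tfrac{1}{2}\sum_{(\mu+\rho,\alpha)>0}\alpha,\ \beta\Bigr)>0
\]
for each noncompact $\beta$ with $(\mu+\rho,\beta)>0$ is precisely the inequality needed to force the ``positive system attached to $V$'' (coming from the Langlands/Vogan parameters of $V$) to agree with the one selected by the discrete series $\tilde V_{-(\mu+\rho)}$. Any non-tempered Langlands quotient, or any tempered representation other than the discrete series with Harish-Chandra parameter $-(\mu+\rho)$, would force a different length/degree count incompatible with Property \textbf{W}. This forces $V\cong V_{-(\mu+\rho)}$, $k=q(\mu+\rho)$, and Kostant multiplicity one. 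Part (2) of Schmid's Theorem \ref{Schmid thm} then confirms that the unitary completion $\tilde V_{-(\mu+\rho)}$ is a discrete series representation, yielding all three conclusions.

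The main obstacle is the combinatorial elimination step in the middle paragraph: with regular infinitesimal character $\chi_{\mu+\rho}$ one must rule out every non-discrete-series irreducible whose $\mathfrak{n}_+$-cohomology a priori contains the weight $-\mu$. This amounts to translating Property \textbf{W} into a strict inequality on the length of the relevant Weyl group element relative to the positive noncompact roots, so that no Langlands quotient associated to a non-fundamental Cartan, and no non-tempered constituent of a principal series, can contribute. Carrying this out precisely is the technical heart of Williams' argument and relies on detailed bookkeeping of Bruhat-type orderings on cosets $W_K\backslash W$.
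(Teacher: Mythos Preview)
The paper does not prove this lemma at all: it is stated as a known result, attributed to Williams \cite{Wi1} and quoted from Lecture 8 of \cite{GGK}, and then used as a black box in the proof of Theorem \ref{mainD compact2}. So there is no ``paper's own proof'' to compare your proposal against.

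That said, your sketch is a reasonable outline of the actual argument in Williams' original paper: the Casselman--Osborne theorem does force the infinitesimal character to be $\chi_{\mu+\rho}$, and the heart of the matter is indeed the elimination of all irreducible Harish-Chandra modules with that infinitesimal character other than the discrete series $V_{-(\mu+\rho)}$, using Property \textbf{W} as the positivity input. You are also right that the combinatorial elimination step is the technical core and that you have not actually carried it out; your last paragraph acknowledges this honestly. If you were writing a self-contained proof you would need to make precise the Hecht--Schmid comparison and the Vogan--Zuckerman style bookkeeping on $W_K\backslash W$, but for the purposes of this paper none of that is required: the authors simply cite the result.
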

Another result we need is the following theorem.
\begin{theorem}[Williams, Theorem 2.4 in \cite{Wi2}]\label{W thm}
Let $X=\Gamma\backslash D$ ba a compact quotient of the flag domain $D=G_\R/T$ and $L_\mu$ be a locally homogenous line bundle on $X$ defined by the weight $\mu$. Assume that $\mu + \rho$ is regular and satisfies Property \textbf{W} of Williams in Lemma \ref{W lemma}. Then 
$$\dim_\C H^{q(\mu + \rho)}(X,L_\mu)=m_{-(\mu + \rho)}(\Gamma),$$
the multiplicity of $V_{-(\mu + \rho)}$ in $ L^2(\Gamma \backslash G_\R)$, and $H^{k}(X,L_\mu)=0$ for $k\neq q(\mu + \rho)$.
\end{theorem}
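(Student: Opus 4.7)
The plan is to combine the Matsushima--Murakami type decomposition of the automorphic cohomology of the compact quotient $X=\Gamma\backslash D$ with the already-stated Williams Lemma \ref{W lemma}. The Matsushima decomposition writes $H^k(X,L_\mu)$ as a sum, over irreducible unitary representations $\pi$ of $G_\R$, of certain Lie algebra cohomology groups weighted by the multiplicities $m_\pi(\Gamma)$ of $\pi$ in $L^2(\Gamma\backslash G_\R)$. Williams Lemma then severely restricts which $\pi$ can contribute once $\mu+\rho$ is regular and satisfies Property \textbf{W}, leaving only one term.

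First I would establish a decomposition of the form
$$H^k(X,L_\mu) \;\cong\; \bigoplus_{\pi\in \widehat{G}_\R}\, m_\pi(\Gamma)\cdot H^k(\mf n_+, V_\pi)_{-\mu}.$$
The ingredients are the Hodge-theoretic identification of $H^k(X,L_\mu)$ with $\bar\partial$-harmonic forms (valid because $X$ is compact), the identification of the $L_\mu$-valued $(0,k)$-forms on $X$ with $\Gamma$-equivariant $K$-finite sections of the associated bundle on $G_\R$, and the $L^2$ decomposition
$$L^2(\Gamma\backslash G_\R) \;\cong\; \widehat{\bigoplus_{\pi\in \widehat{G}_\R}}\, m_\pi(\Gamma)\,\tilde V_\pi,$$
which is a discrete direct sum since $\Gamma$ is co-compact. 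Passing to $K$-finite vectors converts the resulting $(\mf g,K)$-cohomology to $\mf n_+$-cohomology in the $-\mu$ weight space via a Hochschild--Serre / Casselman--Osborne type reduction, giving the displayed formula.

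Next I would apply Williams Lemma \ref{W lemma} to each summand. Because $\mu+\rho$ is regular and satisfies Property \textbf{W}, the lemma implies that $H^k(\mf n_+,V_\pi)_{-\mu}\neq 0$ forces the three simultaneous conclusions $k=q(\mu+\rho)$, $\dim_\C H^k(\mf n_+,V_\pi)_{-\mu}=1$, and $\tilde V_\pi = \tilde V_{-(\mu+\rho)}$, the discrete series with infinitesimal character $\chi_{-(\mu+\rho)}$. Substituting into the Matsushima decomposition, every summand with $k\neq q(\mu+\rho)$ or with $\pi\neq V_{-(\mu+\rho)}$ vanishes, and the unique surviving summand has dimension precisely $m_{-(\mu+\rho)}(\Gamma)\cdot 1$. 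This gives both $H^k(X,L_\mu)=0$ for $k\neq q(\mu+\rho)$ and $\dim_\C H^{q(\mu+\rho)}(X,L_\mu)=m_{-(\mu+\rho)}(\Gamma)$.

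The main obstacle is the first step: setting up the Matsushima decomposition rigorously in the specific form involving $\mf n_+$-cohomology at weight $-\mu$. The delicate technical points are (i) interchanging harmonic projection with the discrete $L^2$ decomposition, which requires co-compactness of $\Gamma$ together with the fact that each $\tilde V_\pi$ occurring has finite multiplicity, and (ii) identifying the $\bar\partial$-complex on $L_\mu$ with the standard complex computing $\mf n_+$-cohomology of the Harish-Chandra modules $V_\pi$, so that the weight-$(-\mu)$ component isolates exactly the line-bundle data. Once this identification is in place, Williams Lemma \ref{W lemma} is a direct plug-in and the theorem follows at once.
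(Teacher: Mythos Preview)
Your proposal is correct and follows exactly the route the paper itself sets up: the paper states (just before Lemma \ref{W lemma}) the Matsushima--Murakami decomposition
\[
H^k(X, L_\mu) = \bigoplus_{\pi \in \widehat{G}_\R} H^k(\mf n_+, V_\pi)_{-\mu}^{\oplus m_\pi(\Gamma)},
\]
and then quotes Theorem \ref{W thm} directly from Williams \cite{Wi2} without supplying a proof. Your argument---plug Lemma \ref{W lemma} into each summand so that only $\pi=\tilde V_{-(\mu+\rho)}$ in degree $k=q(\mu+\rho)$ survives with one-dimensional contribution---is precisely the deduction that combines these two quoted facts, and is the standard derivation.
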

With the above preparations we can prove the following theorem.
\begin{theorem}\label{mainD compact2}
Let $D=G_\R/T$ be a non-classical flag domain with $G_\R$ of Hermitian type. Let $D'$ be the classical flag domain which is diffeomorphic to $D$.
Let $X=\Gamma\backslash D$ and $X'=\Gamma\backslash D'$ be their corresponding quotients by a discrete, co-compact and neat subgroup $\Gamma\subset G_\R$.

Let $\mu$ and $\mu'$ be two weights such that $\mu+\rho=\mu'+\rho'$ is regular and 
$\mu'+\rho'$ 
lies in the Weyl chamber determined by the set of roots
\begin{equation}\label{chamber}
  \Delta_+^\c \cup \Delta_+^\nca\cup\left(-\Delta_+^\ncb \right)=\Delta_+^\c \cup\left(-{\Delta'}_+^\nc \right).
\end{equation}
If for any $\beta\in\Delta_+^\nca$ there exists an $\alpha_\beta\in \Delta_+^\c$ such that 
\begin{equation}
  (\alpha_\beta,\mu'-\beta)<0, \tag{\ref{Pen inj conditions}}
\end{equation}
and moreover,
\begin{equation}\label{PW}
 (\mu'+2\rho'_\nc,\, -{\Delta'}_+^\nc )>0,
\end{equation}
then the Penrose transformation $$\mathscr P:\, H^0(X',L_{\mu'})\to H^q(X,L_{\mu})$$ given by \eqref{PenD compact} is an isomorphism.
\end{theorem}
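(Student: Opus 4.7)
The plan is to combine the injectivity already obtained in Theorem \ref{mainD compact1} with a dimension count via Williams' Theorem \ref{W thm}. Since the Penrose transformation $\mathscr P:H^0(X',L_{\mu'})\to H^q(X,L_\mu)$ is already known to be injective under hypothesis \eqref{Pen inj conditions}, it suffices to prove
\begin{equation*}
\dim_\C H^0(X',L_{\mu'})=\dim_\C H^q(X,L_\mu),
\end{equation*}
and both sides are finite-dimensional since $X,X'$ are compact.

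First I would verify the hypotheses of Williams' Theorem \ref{W thm} on the classical side. Regularity of $\mu'+\rho'=\mu+\rho$ is given. The Weyl chamber assumption \eqref{chamber} says $(\mu'+\rho',\alpha)>0$ for $\alpha\in\Delta_+^\c$ and $(\mu'+\rho',\alpha)<0$ for $\alpha\in{\Delta'}_+^\nc$, so $q'(\mu'+\rho')=0$. For Property \textbf{W} on $D'$, I compute
\begin{equation*}
\tfrac{1}{2}\sum_{(\mu'+\rho',\alpha)>0}\alpha=\rho_\c-\rho'_\nc,
\end{equation*}
so that $\mu'+\rho'-(\rho_\c-\rho'_\nc)=\mu'+2\rho'_\nc$, and Property \textbf{W} for $\mu'$ reduces exactly to hypothesis \eqref{PW}. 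Applying Theorem \ref{W thm} to the line bundle $L_{\mu'}$ on $X'$ yields
\begin{equation*}
\dim_\C H^0(X',L_{\mu'})=m_{-(\mu'+\rho')}(\Gamma).
\end{equation*}

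Next I would carry out the analogous computation on the non-classical side for $(\mu,D)$. Using $\mu+\rho=\mu'+\rho'$ together with \eqref{chamber} in the form $\Delta_+^\c\cup\Delta_+^{\nca}\cup(-\Delta_+^{\ncb})$, I obtain
\begin{equation*}
q(\mu+\rho)=\#\{\alpha\in\Delta_+^\c:(\mu+\rho,\alpha)<0\}+\#\{\alpha\in\Delta_+^\nc:(\mu+\rho,\alpha)>0\}=0+\#\Delta_+^{\nca}=q.
\end{equation*}
The same calculation as above of the half-sum of roots on which $\mu+\rho$ is positive again gives $\rho_\c-\rho'_\nc$, so Property \textbf{W} for $\mu$ on $D$ is also equivalent to \eqref{PW}. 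Theorem \ref{W thm} then yields
\begin{equation*}
\dim_\C H^q(X,L_\mu)=m_{-(\mu+\rho)}(\Gamma).
\end{equation*}
Since $\mu+\rho=\mu'+\rho'$, the two multiplicities coincide; combined with the injectivity from Theorem \ref{mainD compact1}, this forces $\mathscr P$ to be an isomorphism.

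The main obstacle is really bookkeeping rather than a genuinely hard step: one must be careful that the Weyl chamber condition \eqref{chamber} translates correctly on both sides, giving $q'(\mu'+\rho')=0$ on $D'$ and $q(\mu+\rho)=q$ on $D$, and that the apparently asymmetric condition \eqref{PW} is in fact the single version of Property \textbf{W} that works simultaneously for $(\mu,D)$ and $(\mu',D')$, a coincidence that ultimately rests on $\mu+\rho=\mu'+\rho'$ and $\rho'=\rho_\c+\rho'_\nc$. Once these identifications are checked, the result reduces to the fact that Williams' theorem converts both cohomology dimensions into the same multiplicity of the associated discrete series representation in $L^2(\Gamma\backslash G_\R)$.
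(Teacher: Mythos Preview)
Your proposal is correct and follows essentially the same approach as the paper: invoke injectivity from Theorem \ref{mainD compact1}, then use the Weyl chamber hypothesis \eqref{chamber} to read off $q'(\mu'+\rho')=0$ and $q(\mu+\rho)=q$, verify that \eqref{PW} is exactly Property \textbf{W} via the computation $\tfrac{1}{2}\sum_{(\mu+\rho,\alpha)>0}\alpha=\rho_\c-\rho'_\nc$, and conclude by Theorem \ref{W thm} that both cohomology dimensions equal the same discrete series multiplicity $m_{-(\mu+\rho)}(\Gamma)$. The paper presents the Property \textbf{W} verification only once (for $\mu$ on $D$, writing $\mu+\rho-(\rho_\c-\rho'_\nc)=\mu+2\rho_\ncb=\mu'+2\rho'_\nc$), whereas you make explicit that the identical computation works on both $D$ and $D'$ because it depends only on $\mu+\rho=\mu'+\rho'$; this is a minor presentational difference, not a substantive one.
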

\begin{proof}
Since $$\zeta=\mu+\rho=\mu'+\rho'$$ is regular and lies in the Weyl chamber determined by the set of roots in \eqref{chamber}, we have that
\begin{equation}\label{chamber equiv}
  (\zeta,\alpha)>0,\, \forall \, \alpha\in \Delta_+^\c;\, (\zeta,\beta)>0,\, \forall \, \beta\in \Delta_+^\nca;\,(\zeta,\gamma)<0,\, \forall \, \gamma\in \Delta_+^\ncb,
\end{equation}
which implies that $$q'(\mu'+\rho')=0,\, q(\mu+\rho)=q$$
where $q=\#\Delta_+^\nca$.

From Theorem \ref{mainD compact1}, the Penrose transformation $$\mathscr P:\, H^0(X',L_{\mu'})\to H^q(X,L_{\mu})$$ is injective. Hence we need to prove that $$\dim_\C H^0(X',L_{\mu'})=\dim_\C H^q(X,L_{\mu}).$$
For this we only need to check that conditions in \eqref{PW} is equivalent to Property \textbf{W} of Williams in Lemma \ref{W lemma}, since then Theorem \ref{W thm} implies that
$$\dim_\C H^0(X',L_{\mu'})=m_{-(\mu' + \rho')}(\Gamma)=m_{-(\mu + \rho)}(\Gamma)=\dim_\C H^q(X,L_{\mu}).$$

Note that under \eqref{chamber equiv} the conditions in Property \textbf{W}, we have 
$$\beta \in \Delta^\nc \text{ with } (\mu + \rho, \beta) > 0 \Longleftrightarrow \beta\in  \Delta_+^\nca\cup\left(-\Delta_+^\ncb \right)=-{\Delta'}_+^\nc $$
and 
\begin{eqnarray*}
  \frac{1}{2}\sum\limits_{\begin{subarray}{c}\alpha \in \Delta\\ (\mu + \rho, \alpha) > 0\end{subarray}}  \alpha &=& \frac{1}{2} \sum\limits_{\alpha\in \Delta_+^\c}\alpha+\frac{1}{2}\sum\limits_{\beta\in \Delta_+^\nca}\beta-\frac{1}{2}\sum\limits_{\gamma\in \Delta_+^\ncb}\gamma  \\
    &=& \rho_\c+\rho_\nca-\rho_\ncb =\rho_\c-\rho'_\nc,
\end{eqnarray*}
which implies that 
$$\mu + \rho - \frac{1}{2} \sum_{\begin{subarray}{c}\alpha \in \Delta\\ (\mu + \rho, \alpha) > 0\end{subarray}}  \alpha =\mu+2\rho_\ncb=\mu'+2\rho'_\nc.$$
Hence \eqref{PW} is equivalent to Property \textbf{W}.
\end{proof}

Due to its significance in arithmetic geometry and number theory, we consider the homogenous line bundle $L_{\mu'_c}$ defined as the pull-back of the canonical bundle $\omega_{\mathbb B}$ via the holomorphic projection map
$$p':\, D'\to G_\R/K\cong \mathbb B,$$
which is still denoted by $L_{\mu'_c}=\omega_{\mathbb B}\to D'$.

Since $$\mathrm{T}^{1,0}_o G_\R/K \cong \mf p'_-,$$ we have that
$$\mu'_c=-\sum_{\beta \in {\Delta'}_+^\nc}\beta= 2\rho_\nca-2\rho_\ncb.$$ 
Let $k_0$ the maximal positive integer such that $\mu'_{c0}\triangleq \mu'_c/k_0$ is still a weight.
Then 
$$L_{\mu'_{c0}}=\omega_{\mathbb B}^{\otimes 1/k_0}\to D'$$
is a well-defined line bundle on $D'$. 
Let $$\mu_k'=k\mu'_{c0}=\frac{k}{k_0}(2\rho_\nca-2\rho_\ncb),\,\mu_k=\mu_k'-2\rho_\nca.$$ Then
$$L_{\mu_k'}=\omega_{\mathbb B}^{\otimes k/k_0}\to D',\, L_{\mu_k}\to D$$
are two line bundles whose cohomology groups and automorphic cohomology groups are Penrose related.

\begin{theorem}\label{automorphic theorem}
Let the notations be as above. Then there exists a positive integer $N$ such that for $k\ge N$, the Penrose transformation  on automorphic cohomology groups
\begin{equation}\label{PT automorphic form}
 \mathscr P:\, H^0(X',\omega_{\mathbb B}^{\otimes k/k_0})\to H^q(X,L_{\mu_k})
\end{equation}
is an isomorphism. Therefore $H^q(X,L_{\mu_k})$ is isomorphic to the group $$H^0(\Gamma\backslash \mathbb B,\omega_{\mathbb B}^{\otimes k/k_0})$$ of automorphic forms on $\mathbb B$.
\end{theorem}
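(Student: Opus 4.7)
The plan is to apply Theorem \ref{mainD compact2} with $\mu = \mu_k$ and $\mu' = \mu'_k$ for $k$ sufficiently large. The key structural input is the identity $\mu'_c = -2\rho'_\nc$, which is immediate from the definitions, together with the fact that $\rho'_\nc$ is invariant under the compact Weyl group $W_K$. This invariance holds because $\mf p'_-$ is a $K_\C$-module (this is what makes $G_\R/K$ Hermitian symmetric and $p':D'\to G_\R/K$ holomorphic), so $W_K$ permutes its weight system ${\Delta'}_+^\nc$. Consequently, $(\rho'_\nc, \alpha) = 0$ for every $\alpha \in \Delta^\c$.

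With this in hand, setting $\zeta_k := \mu'_k + \rho' = \rho_\c + (1 - 2k/k_0)\rho'_\nc$, I get $(\zeta_k, \alpha) = (\rho_\c, \alpha) > 0$ for $\alpha \in \Delta_+^\c$, since $\rho_\c$ is strictly dominant for the compact positive system. For $\beta \in \Delta_+^\nca$, Lemma \ref{Delta' positive} applied to $-\beta \in {\Delta'}_+^\nc$ gives $(-\beta, 2\rho'_\nc) \geq (\beta, \beta) > 0$, so $(\rho'_\nc, \beta) < 0$; similarly $(\rho'_\nc, \gamma) > 0$ for $\gamma \in \Delta_+^\ncb$. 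Hence for $k$ large enough the $(1 - 2k/k_0)$-term dominates and $(\zeta_k, \beta) > 0$, $(\zeta_k, \gamma) < 0$, placing $\zeta_k$ in the Weyl chamber \eqref{chamber} and ensuring regularity.

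For the injectivity condition \eqref{Pen inj conditions}, $W_K$-invariance of $\rho'_\nc$ gives $(\mu'_k, \alpha) = 0$ for all $\alpha \in \Delta_+^\c$, so $(\alpha, \mu'_k - \beta) = -(\alpha, \beta)$. Thus \eqref{Pen inj conditions} reduces to the existence, for each $\beta \in \Delta_+^\nca$, of some $\alpha \in \Delta_+^\c$ with $(\alpha, \beta) > 0$, which is precisely Lemma \ref{beta alpga >0}. For Property \textbf{W}, i.e.\ condition \eqref{PW}, I compute
$$\mu'_k + 2\rho'_\nc = \left(\frac{k}{k_0} - 1\right)\mu'_c = -2\left(\frac{k}{k_0} - 1\right)\rho'_\nc,$$
and Lemma \ref{Delta' positive} again gives $(2\rho'_\nc, -\delta) > 0$ for $\delta \in -{\Delta'}_+^\nc$, so \eqref{PW} holds as soon as $k > k_0$. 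Choosing $N$ larger than $k_0$ and the bound from the Weyl chamber check, Theorem \ref{mainD compact2} delivers the isomorphism \eqref{PT automorphic form}.

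For the final identification with automorphic forms on $\mathbb B$, the holomorphic projection $p': D' \to G_\R/K = \mathbb B$ descends to a holomorphic submersion $\bar p' : X' \to \Gamma\backslash \mathbb B$ between compact complex manifolds with compact connected fibers isomorphic to $K/T$. Since $\omega_{\mathbb B}^{\otimes k/k_0}$ on $X'$ is by construction the pullback of the corresponding bundle on $\Gamma\backslash\mathbb B$, it is trivial on each fiber, and the projection formula together with $\bar p'_* \mathcal O_{X'} = \mathcal O_{\Gamma\backslash \mathbb B}$ yields $H^0(X', \omega_{\mathbb B}^{\otimes k/k_0}) \cong H^0(\Gamma\backslash\mathbb B, \omega_{\mathbb B}^{\otimes k/k_0})$. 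The main technical hurdle is the sign bookkeeping in the Weyl chamber and Property \textbf{W} verifications; however once the $W_K$-invariance of $\rho'_\nc$ and Lemma \ref{Delta' positive} are in place, each of the four hypotheses of Theorem \ref{mainD compact2} becomes an inequality linear in $k$, and all are simultaneously satisfied once $k \geq N$ for $N$ sufficiently large.
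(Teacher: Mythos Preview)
Your proof is correct and follows the same strategy as the paper's: verify the four hypotheses of Theorem \ref{mainD compact2} (Weyl chamber placement, regularity, condition \eqref{Pen inj conditions}, and condition \eqref{PW}) for $k$ sufficiently large, using Lemma \ref{Delta' positive} and Lemma \ref{beta alpga >0} as the main inputs. The one substantive difference is in establishing $(\mu'_c,\alpha)=0$ for $\alpha\in\Delta_+^\c$: you invoke the $W_K$-invariance of $\rho'_\nc$ (since $\mf p'_-$ is a $K_\C$-module, $W_K$ permutes ${\Delta'}_+^\nc$), whereas the paper carries out a direct root-string computation; your argument is a bit more conceptual, but the two are equivalent and the remainder of the verification is essentially identical.
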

\begin{proof}
The proof is divided into the following steps.

\noindent \textbf{Step 1.} We first prove that $$(2\rho_\nca-2\rho_\ncb,\alpha)=-\sum_{\beta\in {\Delta'}_+^\nc}\left(\beta,\alpha\right)=0,\,\forall \, \alpha\in \Delta_+^\c .$$
Then $(\mu_k',\alpha)=0$, for any $\alpha\in \Delta_+^\c$ and Theorem \ref{mainD compact example} implies that the Penrose transformation \eqref{PT automorphic form} is injective.

First note that $$[\mf k_\pm,\mf p'_-]\subset \mf p'_-\Longleftrightarrow \forall\, \alpha\in \Delta_+^\c,\beta\in {\Delta'}_+^\nc, \beta\pm\alpha \in {\Delta'}_+^\nc \text{ or is not a root}.$$

Now we fix any $\alpha \in \Delta_+^\c$. If there exists $\beta\in {\Delta'}_+^\nc$ such that $(\beta,\alpha)\neq 0$, then we have either that $(\beta,\alpha)>0$, which implies that 
$$\beta,\beta-\alpha,\cdots,\beta-k \alpha \text{ are roots in } {\Delta'}_+^\nc,\, \text{where } k=\frac{2(\beta,\alpha)}{(\alpha,\alpha)},$$
and 
$$\left(\beta+(\beta-\alpha)+\cdots+(\beta- k\alpha),\alpha\right)=(k+1)\left(\beta,\alpha\right)-\frac{k(k+1)}{2}\left(\alpha,\alpha\right)=0,$$
or that 
$(\beta,\alpha)<0$, which implies that 
$$\beta,\beta+\alpha,\cdots,\beta+k \alpha \text{ are roots in } {\Delta'}_+^\nc,\, \text{where } k=\frac{-2(\beta,\alpha)}{(\alpha,\alpha)},$$
and 
$$\left(\beta+(\beta+\alpha)+\cdots+(\beta+ k\alpha),\alpha\right)=(k+1)\left(\beta,\alpha\right)+\frac{k(k+1)}{2}\left(\alpha,\alpha\right)=0.$$
Therefore 
$$ \sum_{\substack{\beta\in {\Delta'}_+^\nc\\ (\beta,\alpha)\neq 0}}\left(\beta,\alpha\right)=0$$
and
$$(2\rho_\nca-2\rho_\ncb,\alpha)=-\sum_{\substack{\beta\in {\Delta'}_+^\nc\\ (\beta,\alpha)= 0}}\left(\beta,\alpha\right)-\sum_{\substack{\beta\in {\Delta'}_+^\nc\\ (\beta,\alpha)\neq 0}}\left(\beta,\alpha\right)=0.$$

\noindent\textbf{Step 2.} Lemma \ref{Delta' positive} implies that
$$(\beta,\beta')\ge 0,\,(\beta,\gamma)\le 0,\,(\gamma,\gamma')\ge 0,\,\forall \, \beta,\beta'\in \Delta_+^\nca,\forall \, \gamma,\gamma'\in \Delta_+^\ncb.$$
Hence 
\begin{eqnarray*}
  (2\rho_\nca-2\rho_\ncb,\beta) &=& \left(\beta+\sum_{{\beta'\in \Delta_+^\nca,\, \beta'\neq \beta}}\beta'-\sum_{{\gamma\in \Delta_+^\ncb}}\gamma,\, \beta\right) \\
   &\ge&(\beta,\beta) >0 ,\,\forall \, \beta\in \Delta_+^\nca
\end{eqnarray*}
and similarly
$$(2\rho_\nca-2\rho_\ncb,\gamma)\le -(\gamma,\gamma)<0,\,\forall \, \gamma\in \Delta_+^\ncb.$$

\noindent\textbf{Step 3.} For $$\mu_k'+\rho'=\rho_\c+\left(\frac{k}{k_0}-\frac{1}{2}\right)(2\rho_\nca-2\rho_\ncb),$$
Step 1 and Step 2 imply that there exists a positive integer $N_1$ such that for $k\ge N_1$,
\begin{eqnarray}
  && (\mu_k'+\rho',\alpha)=(\rho_\c,\alpha)>0,\,\forall\, \alpha\in \Delta_+^\c; \label{positive N_1 1}\\
  &&(\mu_k'+\rho',\beta)=\left(\frac{k}{k_0}-\frac{1}{2}\right)(2\rho_\nca-2\rho_\ncb,\beta)+(\rho_\c,\beta)>0,\,\forall \, \beta\in \Delta_+^\nca; \label{positive N_1 2}\\
  && (\mu_k'+\rho',\gamma)=\left(\frac{k}{k_0}-\frac{1}{2}\right)(2\rho_\nca-2\rho_\ncb,\gamma)+(\rho_\c,\gamma)<0,\,\forall \, \gamma\in \Delta_+^\ncb.  \label{positive N_1 3}
\end{eqnarray}
This implies that $\mu_k+\rho=\mu_k'+\rho'$ is regular and lies in the Weyl chamber determined by the set of roots in \eqref{chamber} in Theorem \ref{mainD compact2}.

Similarly, there exists a positive integer $N$ with $N\ge N_1$ such that for $k\ge N$,
\begin{equation}\label{positive N}
  (\mu_k'+2\rho'_\nc,\, -{\Delta'}_+^\nc )=\left(\frac{k}{k_0}-1\right)\left(2\rho_\nca-2\rho_\ncb,{\Delta}_+^\nca\cup\left(-\Delta_+^\ncb\right)\right)>0.
\end{equation}
Hence \eqref{PW} is satisfied.

Finally, by Theorem \ref{mainD compact2}, Step 1 and Step 3 imply that the Penrose transformation \eqref{PT automorphic form} is an isomorphism.
\end{proof}

\section{Cup products of the automorphic cohomology groups}\label{cup-product section}
In this section, we apply our results to study the cup products of automorphic cohomology groups on $D$, with the target being a TDLDS.

We take $\Gamma\subset G_\R$ a discrete, co-compact and neat subgroup.
First we prove a generalized version of Theorem \ref{automorphic theorem}.
\begin{theorem}\label{twist automorphic theorem}
Let $\mu_0$ be a weight satisfying that
\begin{equation}\label{mu0>0}
 (\mu_0,\alpha)\ge 0,\,\forall\, \alpha\in \Delta_+^\c,
\end{equation}
and that for any $\beta\in \Delta_+^\nca$ there exists an $\alpha_\beta\in \Delta_+^\c$ such that 
\begin{equation}\label{mu0 le 0}
 (\mu_0-\beta,\alpha_\beta)< 0.
\end{equation}
Let $L_{\mu'_{k,0}}\to D'$ and $L_{\mu_{k,0}}\to D$ be two homogenous line bundles defined by the weights
$$\mu'_{k,0}=\frac{k}{k_0}(2\rho_\nca-2\rho_\ncb)+\mu_0,\,\mu_{k,0}=\mu'_{k,0}-2\rho_\nca.$$
Then there exists a positive integer $N$ such that for $k\ge N$, the Penrose transformation  on automorphic cohomology groups
\begin{equation}\label{twist PT automorphic form}
 \mathscr P:\, H^0(X',L_{\mu'_{k,0}})\to H^q(X,L_{\mu_{k,0}})
\end{equation}
is an isomorphism.
\end{theorem}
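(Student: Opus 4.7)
The plan is to apply Theorem~\ref{mainD compact2} to the pair $(\mu_{k,0},\mu'_{k,0})$, whose shift $\mu_{k,0}+\rho=\mu'_{k,0}+\rho'$ equals $\frac{k}{k_0}(2\rho_\nca-2\rho_\ncb)+\mu_0+\rho$, for $k$ sufficiently large. Three conditions must be verified: the injectivity hypothesis \eqref{Pen inj conditions}, regularity of $\mu'_{k,0}+\rho'$ together with its lying in the Weyl chamber described by \eqref{chamber}, and the Property~$\mathbf{W}$ condition \eqref{PW}. My strategy is to reuse the numerical computations from Steps~1, 2, and 3 in the proof of Theorem~\ref{automorphic theorem}, which already pinpoint the signs of the pairings of $2\rho_\nca-2\rho_\ncb$ with the roots in each of the three classes $\Delta_+^\c$, $\Delta_+^\nca$, $\Delta_+^\ncb$.

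For the injectivity condition, I would write $\mu'_{k,0}-\beta=(\mu_0-\beta)+\frac{k}{k_0}(2\rho_\nca-2\rho_\ncb)$. By Step~1 of the proof of Theorem~\ref{automorphic theorem}, the second summand pairs trivially with every $\alpha\in\Delta_+^\c$, so $(\alpha_\beta,\mu'_{k,0}-\beta)=(\alpha_\beta,\mu_0-\beta)<0$ by the hypothesis \eqref{mu0 le 0}. This inequality is completely independent of $k$, so \eqref{Pen inj conditions} holds for every $k\ge 1$ with the same choice $\alpha_\beta$ as in the hypothesis.

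To see that $\mu'_{k,0}+\rho'$ is regular and lies in the required Weyl chamber, I combine Step~1 with the key inequalities from Step~2, namely $(2\rho_\nca-2\rho_\ncb,\beta)\ge (\beta,\beta)>0$ for $\beta\in\Delta_+^\nca$ and $(2\rho_\nca-2\rho_\ncb,\gamma)\le -(\gamma,\gamma)<0$ for $\gamma\in\Delta_+^\ncb$. Since $(\mu_0,\alpha)\ge 0$ on $\Delta_+^\c$ by \eqref{mu0>0} and $(\rho_c,\alpha)>0$, one has $(\mu'_{k,0}+\rho',\alpha)>0$ for every $k$; on the noncompact side the $k$-term dominates the bounded contribution of $\mu_0+\rho$, so $(\mu'_{k,0}+\rho',\beta)>0$ and $(\mu'_{k,0}+\rho',\gamma)<0$ for $k\ge N_1$, exactly reproducing \eqref{positive N_1 1}--\eqref{positive N_1 3}. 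The same mechanism handles Property~$\mathbf{W}$: rewriting $\mu'_{k,0}+2\rho'_\nc=(\frac{k}{k_0}-1)(2\rho_\nca-2\rho_\ncb)+\mu_0$ and applying Step~2 separately to the two pieces of $-{\Delta'}_+^\nc=\Delta_+^\nca\cup(-\Delta_+^\ncb)$ yields $(\mu'_{k,0}+2\rho'_\nc,\,-{\Delta'}_+^\nc)>0$ once $k\ge N$ for some $N\ge N_1$. The only real obstacle is bookkeeping---simultaneously meeting the two $k$-thresholds---which is clear because $\mu_0$ and the root system are fixed while the $k$-dependent contribution grows linearly with the correct signs in each class. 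With all three conditions in hand, Theorem~\ref{mainD compact2} delivers the isomorphism \eqref{twist PT automorphic form}.
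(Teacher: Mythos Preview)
Your proposal is correct and follows essentially the same approach as the paper: apply Theorem~\ref{mainD compact2} and verify its three hypotheses by reusing Steps~1--3 from the proof of Theorem~\ref{automorphic theorem}, with the twist $\mu_0$ handled via the assumptions \eqref{mu0>0} and \eqref{mu0 le 0}. One small slip: the shift $\mu'_{k,0}+\rho'$ equals $\frac{k}{k_0}(2\rho_\nca-2\rho_\ncb)+\mu_0+\rho'$, not $+\rho$, but this typo plays no role in your subsequent argument.
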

\begin{proof}
The proof is similar to that of Theorem \ref{automorphic theorem}.

From Theorem \ref{mainD compact2}, the Penrose transformation \eqref{twist PT automorphic form} is injective, provided that for any $\beta\in\Delta_+^\nca$ there exists an $\alpha_\beta\in \Delta_+^\c$ such that 
\begin{equation}
  (\mu'_{k,0}-\beta,\alpha_\beta)=(\mu_{0}-\beta,\alpha_\beta)< 0, 
\end{equation}
which holds from Step 1 of the proof of Theorem \ref{automorphic theorem} and the assumption \eqref{mu0 le 0}.

Note that
$$\mu'_{k,0}+\rho'=\mu_0+\rho_\c+\left(\frac{k}{k_0}-\frac{1}{2}\right)(2\rho_\nca-2\rho_\ncb),$$
Step 1 and Step 2 in Theorem \ref{automorphic theorem} and the assumption \eqref{mu0>0} imply that there exists a positive integer $N_1$ such that for $k\ge N_1$,
\begin{eqnarray*}
  && (\mu'_{k,0}+\rho',\alpha)=(\mu_0+\rho_\c,\alpha)\ge (\rho_\c,\alpha)>0,\,\forall\, \alpha\in \Delta_+^\c; \label{positive N_1 1}\\
  &&(\mu'_{k,0}+\rho',\beta)=\left(\frac{k}{k_0}-\frac{1}{2}\right)(2\rho_\nca-2\rho_\ncb,\beta)+(\mu_0+\rho_\c,\beta)>0,\,\forall \, \beta\in \Delta_+^\nca; \label{positive N_1 2}\\
  && (\mu'_{k,0}+\rho',\gamma)=\left(\frac{k}{k_0}-\frac{1}{2}\right)(2\rho_\nca-2\rho_\ncb,\gamma)+(\mu_0+\rho_\c,\gamma)<0,\,\forall \, \gamma\in \Delta_+^\ncb.  \label{positive N_1 3}
\end{eqnarray*}
This implies that $\mu_{k,0}+\rho=\mu'_{k,0}+\rho'$ is regular and lies in the Weyl chamber determined by the set of roots in \eqref{chamber} in Theorem \ref{mainD compact2}.

Similarly, there exists a positive integer $N$ with $N\ge N_1$ such that for $k\ge N$,
\begin{eqnarray*}\label{positive N}
  (\mu'_{k,0}+2\rho'_\nc,\, -{\Delta'}_+^\nc )&=&\left(\frac{k}{k_0}-1\right)\left(2\rho_\nca-2\rho_\ncb,{\Delta}_+^\nca\cup\left(-\Delta_+^\ncb\right)\right)
  \\
  && +\left(\mu_0, {\Delta}_+^\nca\cup\left(-\Delta_+^\ncb\right)\right)>0.
\end{eqnarray*}
Hence \eqref{PW} in Theorem \ref{mainD compact2} is satisfied.

Finally, by Theorem \ref{mainD compact2}, the Penrose transformation \eqref{PT automorphic form} is an isomorphism.
\end{proof}

Let $D$ be a non-classical flag domain with $G_\R$ of Hermitian type. Let 
$$\Delta_+=\Delta_+^\c\cup\Delta_+^\nca\cup\Delta_+^\ncb $$
be the set of positive roots corresponding to the complex structure of $D$.

Let $D'$ and $D''$ be the classical flag domains, which are diffeomorphic to $D$, with the complex structures determined respectively by the set of positive roots
\begin{eqnarray*}
  {\Delta'}_+=\Delta_+^\c\cup(-\Delta_+^\nca)\cup\Delta_+^\ncb;\\
  {\Delta''}_+=\Delta_+^\c\cup\Delta_+^\nca\cup(-\Delta_+^\ncb).
\end{eqnarray*}
Then we have two holomorphic projection maps
$$p':\, D'\to \mathbb B,\, p'':\, D''\to \bar{\mathbb B},$$
where $\bar{\mathbb B}$ denotes the complex conjugate of the Hermitian symmetric domain $\mathbb B\cong G_\R/K$, with the complex structure given by ${\Delta'}_+$.

Let 
$$L_{\mu_{k,0}'}=\omega_{\mathbb B}^{\otimes k/k_0}\otimes L_{\mu_0}\to \mathbb B,\, L_{\lambda'_{k,0}}=\omega_{\bar{\mathbb B}}^{\otimes k/k_0}\otimes L_{\lambda_0}\to \bar{\mathbb B}$$
be the canonical bundles tensored with the homogenous line bundles defined by the weights $\mu_0$ and $\lambda_0$ with 
$$\mu_0+\lambda_0=\rho_\nc-\rho_\c.$$
Hence 
$$\mu_{k,0}'=\frac{k}{k_0}(2\rho_\nca-2\rho_\ncb)+\mu_0,\, \lambda_{k,0}'=-\frac{k}{k_0}(2\rho_\nca-2\rho_\ncb)+\lambda_0.$$

We still denote their pull-backs to $D'$ and $D''$ by $L_{\mu_{k,0}'}\to D'$ and $L_{\lambda'_{k,0}}\to D''$ respectively. Then $H^0(D',L_{\mu_{k,0}'})$ and $H^0(D'',L_{\lambda'_{k,0}})$ are Penrose related to $H^q(D,L_{\mu_{k,0}})$ and $H^p(D, L_{\lambda_{k,0}})$ respectively, where 
$$\mu_{k,0}= \mu_{k,0}'-2\rho_\nca,\, \lambda_{k,0}=\lambda_{k,0}'-2\rho_\ncb,$$
and $q=\#\Delta_+^\nca$, $p=\#\Delta_+^\ncb$.

\begin{theorem}\label{cup-products}
Let $X$ be a smooth and compact quotient of a non-classical flag domain $D$ with $G_\R$ of Hermitian type.
Let $\mu_0,\lambda_0$ be two weights with $\mu_0+\lambda_0=\rho_\nc-\rho_\c$.
Assume that
\begin{equation}\label{mu0 lambda_{0} alpha >=0}
 (\mu_0,\alpha), (\lambda_0,\alpha)\ge 0,\,\forall\, \alpha\in \Delta_+^\c,
\end{equation}
and that for any $\beta\in \Delta_+^\nca$ and any $\gamma\in \Delta_+^\ncb$ there exist  $\alpha_\beta$ and $\alpha_\gamma$ in $\Delta_+^\c$ such that 
\begin{equation}\label{mu0 lambda_{0} beta <0}
 (\mu_0-\beta,\alpha_\beta)< 0, \text{ and }(\lambda_0-\gamma,\alpha_\gamma)< 0.
\end{equation}
Then there exists a homomorphism of groups
\begin{equation}\label{cup-product1}
H^0(\Gamma\backslash\mathbb B,\omega_{\mathbb B}^{\otimes k/k_0}\otimes L_{\mu_0})\times H^0(\Gamma\backslash \bar{\mathbb B}, \omega_{\bar{\mathbb B}}^{\otimes k/k_0}\otimes L_{\lambda_0}) \to H^{d}(X,L_{-\rho})^*,
\end{equation}
for all $k$ greater than some positive integer $N$, where $d=\#\Delta_+^\c$.
\end{theorem}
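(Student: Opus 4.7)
The plan is to realize the desired bilinear map as the composition of three pieces: two Penrose isomorphisms (one per factor) supplied by Theorem~\ref{twist automorphic theorem}, the ordinary cup product on $X$, and Serre duality on the compact complex manifold $X$.

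For the first piece, Theorem~\ref{twist automorphic theorem} applied to the weight $\mu_0$ gives, for $k$ larger than some $N_1$, a Penrose isomorphism
$$\mathscr P:\, H^0(X', L_{\mu'_{k,0}}) \xrightarrow{\cong} H^q(X, L_{\mu_{k,0}}).$$
Since $L_{\mu'_{k,0}} = (p')^*\bigl(\omega_{\mathbb B}^{\otimes k/k_0} \otimes L_{\mu_0}\bigr)$ is pulled back along the proper holomorphic submersion $X' \to \Gamma\backslash \mathbb B$ with compact connected fibers $K/T$, I identify
$$H^0(X', L_{\mu'_{k,0}}) \cong H^0(\Gamma\backslash \mathbb B,\, \omega_{\mathbb B}^{\otimes k/k_0} \otimes L_{\mu_0}).$$
By symmetry, i.e.\ by applying the same argument with the roles of $\Delta_+^\nca$ and $\Delta_+^\ncb$ exchanged (which is legitimate by Remark~\ref{beta alpga >0 remark}(2)), the analogous construction applied to $\lambda_0$ produces, for $k$ larger than some $N_2$, a Penrose isomorphism
$$\mathscr P:\, H^0(X'', L_{\lambda'_{k,0}}) \xrightarrow{\cong} H^p(X, L_{\lambda_{k,0}}),$$
together with the identification
$$H^0(X'', L_{\lambda'_{k,0}}) \cong H^0(\Gamma\backslash \bar{\mathbb B},\, \omega_{\bar{\mathbb B}}^{\otimes k/k_0} \otimes L_{\lambda_0}).$$

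Setting $N = \max(N_1, N_2)$, for $k \ge N$ I then compose with the cup product on $X$,
$$\cup:\, H^q(X, L_{\mu_{k,0}}) \otimes H^p(X, L_{\lambda_{k,0}}) \to H^{q+p}(X, L_{\mu_{k,0} + \lambda_{k,0}}).$$
A direct computation of weights gives
$$\mu_{k,0} + \lambda_{k,0} = (\mu_0 + \lambda_0) - 2\rho_\nca - 2\rho_\ncb = (\rho_\nc - \rho_\c) - 2\rho_\nc = -\rho.$$
Since the canonical bundle of $X$ is $K_X = L_{-2\rho}$ and $\dim_\C X = d + q + p$, Serre duality on the compact complex manifold $X$ yields
$$H^{q+p}(X, L_{-\rho}) \cong H^d(X, L_\rho \otimes K_X)^* = H^d(X, L_{-\rho})^*.$$
Concatenating these arrows produces the bilinear homomorphism claimed in \eqref{cup-product1}.

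The main subtlety is the symmetric application of Theorem~\ref{twist automorphic theorem} to $\lambda_0$: here $D''$ plays the role that $D'$ plays for $\mu_0$, and one must check that the injectivity condition \eqref{Pen inj conditions} and Property~\textbf{W} of Williams continue to hold with $\Delta_+^\ncb$ in place of $\Delta_+^\nca$. This is precisely what Remark~\ref{beta alpga >0 remark}(2) is designed to guarantee, and the resulting large-$k$ positivity estimates for $\mu_{k,0} + \rho$ and $\lambda_{k,0} + \rho$ are identical in form to those already carried out in the proof of Theorem~\ref{twist automorphic theorem}. Everything else is bookkeeping with the weights $\mu_0, \lambda_0$ and the relation $\mu_0 + \lambda_0 = \rho_\nc - \rho_\c$.
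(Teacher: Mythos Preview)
Your proposal is correct and follows essentially the same route as the paper: apply Theorem~\ref{twist automorphic theorem} twice (once for $D'$ with $\mu_0$, once for $D''$ with $\lambda_0$), compute $\mu_{k,0}+\lambda_{k,0}=-\rho$, take the cup product, and finish with Serre duality using $\omega_X=L_{-2\rho}$ and $\dim_\C X=d+p+q$. You are in fact a bit more explicit than the paper in spelling out the pullback identification $H^0(X',L_{\mu'_{k,0}})\cong H^0(\Gamma\backslash\mathbb B,\omega_{\mathbb B}^{\otimes k/k_0}\otimes L_{\mu_0})$ and in writing $N=\max(N_1,N_2)$; one small remark is that the symmetric application to $D''$ does not actually rely on Remark~\ref{beta alpga >0 remark}(2), since the needed injectivity condition for $\lambda_0$ is assumed directly in \eqref{mu0 lambda_{0} beta <0} rather than being deduced from Lemma~\ref{beta alpga >0}.
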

\begin{proof}
From Theorem \ref{twist automorphic theorem}, we have the isomorphisms
\begin{eqnarray*}
&&H^0(\Gamma\backslash\mathbb B,\omega_{\mathbb B}^{\otimes k/k_0}\otimes L_{\mu_0})\cong H^q(X,L_{\mu_{k,0}})\\
&&H^0(\Gamma\backslash \bar{\mathbb B}, \omega_{\bar{\mathbb B}}^{\otimes k/k_0}\otimes L_{\lambda_0}) \cong H^p(X, L_{\lambda_{k,0}}).
\end{eqnarray*}
Since $\omega_X=L_{-2\rho}$ and
$$\mu_{k,0}+\lambda_{k,0}= \mu_{k,0}'+\lambda_{k,0}'-2\rho_\nca-2\rho_\ncb=\mu_0+\lambda_0-2\rho_\nc=-\rho,$$
the cup-product of the automorphic cohomology groups 
\begin{equation}\label{cup-product2}
H^q(X,L_{\mu_{k,0}}) \times H^p(X, L_{\lambda_{k,0}}) \to H^{p+q}(X,L_{\mu_{k,0}+\lambda_{k,0}})
\end{equation}
together with the Serre duality $$H^{p+q}(X,L_{\mu_{k,0}+\lambda_{k,0}})=H^{\dim_\C X-p-q}(X,\omega_X \otimes L_{-\mu_{k,0}-\lambda_{k,0}})^*=H^{d}(X,L_{-\rho})^*$$
gives the homomorphism of groups in \eqref{cup-product1}.
\end{proof}
\begin{remark}
\textnormal{The conditions with \eqref{mu0 lambda_{0} alpha >=0} and \eqref{mu0 lambda_{0} beta <0} satisfied simultaneously are quite strict. We refer the reader to Section \ref{examples} for the examples of $SU(r,s)$ and $Sp(2n,\C)$, where only $SU(s+1,s)$ and $Sp(6,\C)$ are satisfied.
}
\end{remark}
\begin{corollary}\label{cup-products2}
Let the notations be as Theorem \ref{cup-products} and suppose that the flag domain $D$ factors as
$$
D = D^1 \times \cdots \times D^n,
$$ 
where $D^i = G_\R^i / T^i$ with $G_\R^i$ simple and of Hermitian type and $G_\R^i\neq Sp(4,\R)$ for $1\le i\le n$.
If
$$\rho_\nc-\rho_\c=0,$$
then there exists a homomorphism of groups 
\begin{equation}\label{cup-product1'}
H^0(\Gamma\backslash\mathbb B,\omega_{\mathbb B}^{\otimes k/k_0})\times H^0(\Gamma\backslash \bar{\mathbb B}, \omega_{\bar{\mathbb B}}^{\otimes k/k_0}) \to H^{d}(X,L_{-\rho})^*,
\end{equation}
for all $k$ greater than some positive integer $N$.
\end{corollary}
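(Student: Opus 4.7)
The strategy is to derive the corollary as a direct specialization of Theorem \ref{cup-products} to the case $\mu_0 = \lambda_0 = 0$. The compatibility condition $\mu_0 + \lambda_0 = \rho_\nc - \rho_\c$ required there then reduces precisely to the standing hypothesis $\rho_\nc - \rho_\c = 0$, the twisting line bundles $L_{\mu_0}$ and $L_{\lambda_0}$ become trivial, and the source of the pairing collapses to $H^0(\Gamma\backslash\mathbb B, \omega_{\mathbb B}^{\otimes k/k_0}) \times H^0(\Gamma\backslash\bar{\mathbb B}, \omega_{\bar{\mathbb B}}^{\otimes k/k_0})$, matching the source of \eqref{cup-product1'} exactly.

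It then remains to verify the two hypotheses \eqref{mu0 lambda_{0} alpha >=0} and \eqref{mu0 lambda_{0} beta <0} of Theorem \ref{cup-products}. The first, $(\mu_0,\alpha),(\lambda_0,\alpha) \ge 0$ for all $\alpha \in \Delta_+^\c$, is trivial for $\mu_0 = \lambda_0 = 0$. The second reduces to: for every $\beta \in \Delta_+^\nca$ there is some $\alpha_\beta \in \Delta_+^\c$ with $(\beta,\alpha_\beta) > 0$, and symmetrically, for every $\gamma \in \Delta_+^\ncb$ there is some $\alpha_\gamma \in \Delta_+^\c$ with $(\gamma,\alpha_\gamma) > 0$. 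Together these say that every noncompact positive root of $D$ pairs strictly positively with some compact positive root, which is exactly the content of the strengthened root-system Lemma \ref{beta alpga >0'}.

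The main (though minor) obstacle is the passage from Lemma \ref{beta alpga >0} (which only handles $\Delta_+^\nca$) to Lemma \ref{beta alpga >0'} (which handles all of $\Delta_+^\nc$), and this is precisely the reason the hypothesis excludes $Sp(4,\R)$-factors: as noted in Remark \ref{beta alpga >0 remark}(2), for $Sp(4,\R)$ the set $\Delta_+^\ncb$ consists of a single long root which can fail to pair positively with any compact positive root, so the requirement on $\Delta_+^\ncb$ would break. With this factor excluded, Lemma \ref{beta alpga >0'} applies uniformly to $\Delta_+^\nc$, so both halves of \eqref{mu0 lambda_{0} beta <0} are secured. Theorem \ref{cup-products} then produces a positive integer $N$ such that the pairing \eqref{cup-product1'} exists for every $k \ge N$, which is the desired conclusion.
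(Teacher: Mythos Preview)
Your proposal is correct and follows exactly the paper's own argument: set $\mu_0=\lambda_0=0$, note that \eqref{mu0 lambda_{0} alpha >=0} is trivial, and invoke Lemma \ref{beta alpga >0'} (which requires the exclusion of $Sp(4,\R)$-factors) to obtain \eqref{mu0 lambda_{0} beta <0}, then apply Theorem \ref{cup-products}.
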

\begin{proof}
We take $\mu_0=\lambda_0=0$. Then \eqref{mu0 lambda_{0} alpha >=0} holds automatically and \eqref{mu0 lambda_{0} beta <0} holds from Lemma \ref{beta alpga >0'}. Hence the corollary follows from Theorem \ref{cup-products}.
\end{proof}

\begin{remark}
\textnormal{The cohomology group $ H^{d}(X, L_{-\rho}) $ is a Harish-Chandra module with infinitesimal character $ \chi_{0} $, as stated in Schmid's Theorem, see (3) of Theorem \ref{Schmid thm}. This is called TDLDS (totally degenerate limits of discrete series) in \cite{GGK}. Please see \cite{CK07} and \cite{GGK} for further details on the applications of TDLDS to representation theory and Hodge theory.}
\end{remark}

Therefore, it is of interest to propose the following problem for the non-classical flag domain $ D $ with $ G_\R $ of Hermitian type.

\begin{problem}
(1) When are the homomorphisms of groups in \eqref{cup-product1} and \eqref{cup-product1'} surjective?

(2) When do the kernels of the homomorphisms of groups in \eqref{cup-product1} and \eqref{cup-product1'} have arithmetic structures?
\end{problem}

From the proof of Theorem \ref{cup-products}, we see that Problem (1) is equivalent to the injectivity of the homomorphism
\begin{equation*}
H^q(X,L_{\mu_{k,0}}) \times  H^{d}(X,L_{-\rho})\to H^{q+d}(X, L_{\mu_{k,0}-\rho}).
\end{equation*}

To our knowledge, Problem (1) is solved only for $SU(2,1)$ and $Sp(4)$. Please see \cite{C1}, \cite{C2}, \cite{C3}, \cite{GGK} and \cite{Ker14}. Problem (2) seems to be unknown.

We will provide detailed study of general cases of Problem (1) in our forthcoming paper.

\section{Examples}\label{examples}
In this section, we discuss the examples from the classical groups $SU(r,s)$ and $Sp(2n,\C)$ to illustrate the results proved in previous sections. The basic notions of Mumford-Tate groups and domains are needed to understand the examples. Please see \cite{GGK12} and \cite{GGK}, Lecture 3 and Lecture 5, for details.

\begin{example}[$SU(r,s),\, r\ge s$]\label{U(r,s)}
Let $V$ be a $\mathbb{Q}$-vector space of dimension $2(r+s)$ with an action of $\mathbb F = \mathbb{Q}(\sqrt{-1})$. Let $\vec{v}=(v_1,\cdots,v_{2(r+s)})^T$ be a basis of $V$ such that $\sqrt{-1}.v_i=v_{i+r+s}$ for $i=1,\cdots,r+s$. Then $\sqrt{-1}.v_{i+r+s}=-v_i$, and the action of $\sqrt{-1}$ on the column vector $\vec{v}$ is given by
\begin{equation}\label{action of -1}
  \sqrt{-1}.\vec{v}=\begin{pmatrix}
                      & I \\
                     I &  
                   \end{pmatrix}\vec{v}.
\end{equation}

Let $V_{\mathbb F} ={\mathbb F} \otimes_{\mathbb{Q}} V$. Then we have the decomposition
$$V_{\mathbb F} = V_+ \oplus V_-$$
where $$V_\pm=\mathbb F\{v_i\mp\sqrt{-1}v_{i+r+s}:\,i=1,\cdots,r+s\}$$ is the $\pm$ eigenspaces of $V_{\mathbb F}$ with respect to the action of ${\mathbb F}$. One should be careful to distinguish the scalar product $\sqrt{-1}v$ and the action $\sqrt{-1}.v$ of $\sqrt{-1}$ on $v\in V_{\mathbb F}$.

Let $Q$ be a non-degenerate anti-symmetric bilinear form $Q :\, V \otimes V \to \mathbb{Q}$. Let $$H(v_i,v_j)=\sqrt{-1}Q(v_i,\bar{v_j}),$$ whose matrix $H(v_i,v_j)$ under the basis $v$ is 
$$H_v =\frac{1}{2}
\begin{pmatrix}
I_{r,s} & O  \\
O & I_{r,s} 
\end{pmatrix}$$
where  
$$I_{r,s} =
\begin{array}{c}
\begin{pmatrix}
  1 &  &  &  &  &  \\
   & \ddots &  &  &  &  \\
   &  & 1 &  &  &  \\
   &  &  & -1 &  &  \\
   &  &  &  & \ddots &  \\
   &  &  &  &  & -1 \\
\end{pmatrix} \\
\begin{array}{cc}
\underbrace{\hphantom{\begin{matrix} 1 & \cdots & 1 \end{matrix}}}_{r} & 
\underbrace{\hphantom{\begin{matrix} -1 & \cdots & -1 \end{matrix}}}_{s}
\end{array}
\end{array}.$$

Let $$\{e_i=v_i-\sqrt{-1}v_{i+r+s}:\, i=1,\cdots,r+s\}$$ be a basis of $V_+$.
Then the matrix $Q(e_i,e_j)$ under the basis $\vec{e}=(e_1,\cdots,e_{r+s})^T$ is 
$$Q_e =
\begin{pmatrix}
O & J_{r,s}  \\
J_{r,s} & O 
\end{pmatrix},$$
where
$$ 
J_{r,s} =
\begin{array}{c}
\begin{pmatrix}
   &  &  &  &  & 1 \\
   &  &  &  & \begin{sideways}$\ddots$\end{sideways} &  \\
   &  &  & 1 &  &  \\
   &  &  -1& &  &  \\
   &  \begin{sideways}$\ddots$\end{sideways}&  &  & &  \\
  -1 &  &  &  &  &  \\
\end{pmatrix} \\
\begin{array}{cc}
\underbrace{\hphantom{\begin{matrix} -1 & \cdots  & -1 \end{matrix}}}_{s} & 
\underbrace{\hphantom{\begin{matrix} 1 &\cdots  & 1 \end{matrix}}}_{r}
\end{array}
\end{array}
.$$

Let $$\mathcal{U}_{\mathbb F}(r,s)=\mathrm{Aut}_{\mathbb F}(V,Q)$$
be the automorphism group of the $\mathbb{Q}$-vector space $V$, which preserves the bilinear form $Q$ and commutes with the action \eqref{action of -1} of $\sqrt{-1}$.

Let $$\mathcal{U}(r,s)=\mathrm{Res}_{\mathbb F/\mathbb Q}\,\mathcal{U}_{\mathbb F}(r,s).$$ Then
$$\mathcal{U}(r,s) = \left\{\begin{pmatrix}
                       A & B\\
                       -B & A
                     \end{pmatrix}\in \mathrm{GL}(2(r+s),\mathbb Q)\left|\, \begin{array}{l}
                                                                    A^TI_{r,s}A+B^TI_{r,s} B=I_{r,s} \\
                                                                    A^T I_{r,s}B=B^TI_{r,s} A 
                                                                  \end{array}\right. \right\},$$
which is an algebraic group over $\mathbb Q$, and 
$$\mathcal{U}_{\mathbb F}(r,s)= \left\{C=A+\sqrt{-1}B\in \mathrm{GL}(r+s,\mathbb F)\left|\, \bar{C}^T I_{r,s}C=I_{r,s}\right. \right\}.$$                                                         
Then complexification $\mathcal{U}(r,s)_\C$ of $\mathcal{U}(r,s)$ is isomorphic to the classical group $U(r,s)$.

Define a decomposition on $V_{+,\C}$ by
\begin{eqnarray}
  V_{+,\C}&=&V_+^{2r,0}\oplus V_+^{2r-1,1}\oplus V_+^{2r-2,2}\oplus V_+^{2r-3,3}\oplus\cdots\oplus V_+^{2r-s+1,s-1}\oplus V_+^{2r-s,s} \label{Hodge r,s V_+}\\
  &&\oplus V_+^{2r-s-1,s+1}\oplus V_+^{2r-s-2,s+2}
  \oplus\cdots\oplus V_+^{1,2r-1}\oplus V_+^{0,2r}  \nonumber\\
  &=&\C\{e_1\}\oplus \C\{e_{r+1}\}\oplus \C\{e_2\}\oplus \C\{e_{r+2}\} \oplus \cdots\oplus  \C\{e_s\}\oplus \C\{e_{r+s}\}\nonumber \\
  && \oplus  \C\{e_{s+1}\}\oplus 0\oplus \cdots\oplus  \C\{e_{r}\}\oplus 0\nonumber
\end{eqnarray}
and define the decomposition $$V_{-,\C}=\oplus_{p+q=2r}V_-^{p,q}$$ by 
\begin{equation}\label{Hodge r,s V_-}
  V_-^{p,q}=\bar{V_+^{q,p}},\, \forall\, p+q=2r.
\end{equation}
Then 
\begin{equation}\label{Hodge r,s}
  V_\C=V_{+,\C}\oplus V_{-,\C}
\end{equation}
together with the decompositions \eqref{Hodge r,s V_+} and \eqref{Hodge r,s V_-} gives a polarized Hodge structure of weight $2r$ on $V_\C$.

According to Page 40 of \cite{GGK}, this polarized Hodge structure has Mumford-Tate group $$S\mathcal{U}(r,s)=\mathcal{U}(r,s)\cap \mathrm{Res}_{\mathbb F/\mathbb Q}\,\mathrm{SL}(2(r+s),\mathbb F),$$ whose complexification $S\mathcal{U}(r,s)_\C$ is isomorphic to the classical group $SU(r,s)$.

Since $V_{-,\C}=\bar{V_{+,\C}}$, the real subgroup $S\mathcal{U}(r,s)_\R$ of $S\mathcal{U}(r,s)_\C$ can be considered as the automorphism group of $V_{+,\C}$ preserving the Hermitian form 
$$H(\cdot,\cdot)|_{V_{+,\C}} \sim \frac{1}{2}I_{r,s}.$$
Then the Mumford-Tate domain $D$ is isomorphic to the flag domain $S\mathcal{U}(r,s)_\R/T$.

The maximal torus $T$ of $S\mathcal{U}(r,s)_\R$ is given by
$$\left\{h=\begin{pmatrix}
                       e^{2\pi\sqrt{-1}\theta_1} & &\\
                    &\ddots &\\
                    \\
                    &&e^{2\pi\sqrt{-1}\theta_{r+s}}
                     \end{pmatrix}:\, \sum_{i=1}^{r+s}\theta_i=2k \right\} ,$$
and the Cartan subgroup $\mf h=(\mathrm{Lie}\, T)_\C \cong \C^{r+s}$ with the basis $e_1,\cdots,e_{r+s}$. 
Then we can identify $$X=\mathrm{diag}(\theta_1,\cdots,\theta_{r+s})\in \mf h$$ with $$\theta= (\theta_1,\cdots,\theta_{r+s})=\theta_1e_1+\cdots+\theta_{r+s}e_{r+s}.$$

Let $e_1^*,\cdots,e_{r+s}^*$ be the dual basis of $e_1,\cdots,e_{r+s}$. Then 
$$[X,E_{ij}]=(\theta_i-\theta_j)E_{ij}=(e_i^*-e_j^*)(X)E_{ij},$$
where $E_{ij}$ is the matrix $(a_{kl})$ with $a_{kl}=1$ if $k=i,l=j$ and $a_{kl}=0$ otherwise.

The set of simple roots of $SU(r,s)$ is given by
\begin{eqnarray*}
  && \epsilon_1=e^*_1-e^*_2,\epsilon_2=e^*_2-e^*_3, \cdots,\epsilon_{r-1}=e^*_{r-1}-e^*_r\\
  && \epsilon_r=e^*_r-e^*_{r+1},\\
  && \epsilon_{r+1}=e^*_{r+1}-e^*_{r+2},\cdots, \epsilon_{r+s-1}=e^*_{r+s-1}-e^*_{r+s}
\end{eqnarray*}
while $\epsilon_1,\cdots,\epsilon_{r-1},\epsilon_{r+1},\cdots,\epsilon_{r+s-1}$ are compact and $\epsilon_r$ is non-compact.

The complex structure of the Mumford-Tate domain $D$ is given by 
\begin{equation}\label{g -k k}
\mf n_-=\bigoplus_{k>0}\mf g^{-k,k},
\end{equation}
where $$\mf g^{-k,k}=\{X\in \mf g:\, XV_+^{p,q}\subset V_+^{p-k,q+k},\, \forall\, p+q=2r\}.$$

The sets of positive roots $\Delta_+=\Delta_+^\c\cup \Delta_+^\nc$ corresponding to the complex structure \eqref{g -k k} of the Mumford-Tate domain $D$ are
\begin{eqnarray}
 \Delta_+^\c &=&\{e^*_i-e^*_j:\,1\le i<j\le r\}\cup\{e^*_{r+l}-e^*_{r+m}:\, 1\le l<m\le s\} \label{Delta c for SU};\\
 \Delta_+^\nc &=&\{e^*_{l}-e^*_{r+m}:\, 1\le l\le m\le s\}\cup \{e^*_{r+l}-e^*_{i}:\, 1\le l\le s,\, l<i\le r\}.\nonumber
\end{eqnarray}
Since 
$$ {\Delta'}_+^\nc =\{e^*_{l}-e^*_{r+m}:\, 1\le l\le m\le s\}\cup \{e^*_{i}-e^*_{r+l}:\, 1\le l\le s,\, l<i\le r\}$$
gives a complex structure $\mathbb B$ on the symmetric space $G_\R/K$,
we have that 
\begin{eqnarray}
 \Delta_+^\nca&=&\{e^*_{l}-e^*_{r+m}:\, 1\le l\le m\le s\} \label{Delta nc1 for SU}\\
  &=&\{e^*_{1}-e^*_{r+1},\cdots, e^*_{1}-e^*_{r+s};\,e^*_{2}-e^*_{r+2},\cdots,e^*_{2}-e^*_{r+s};\,\cdots e^*_{s}-e^*_{r+s}\},\nonumber\\
 \Delta_+^\ncb&=& \{e^*_{r+l}-e^*_{i}:\, 1\le l\le s,\, l<i\le r\} \label{Delta nc2 for SU}\\
 &=&\{e^*_{r+1}-e^*_{2},\cdots,e^*_{r+1}-e^*_{r};\,e^*_{r+2}-e^*_{3},\cdots,e^*_{r+2}-e^*_{r};\,\cdots;\nonumber\\
 && \,\,\, e^*_{r+s}-e^*_{s+1},\cdots,e^*_{r+s}-e^*_{r} \}.\nonumber
\end{eqnarray}

By direct computations, we have that 
\begin{eqnarray*}
  2\rho_\c&=& (r-1)e^*_1+(r-3)e^*_2+\cdots +(1-r)e^*_{r}\\&&+(s-1)e^*_{r+1}+(s-3)e^*_{r+2}+\cdots+(1-s)e^*_{r+s}; \\
  2\rho_\nc &=&se^*_1+(s-2)e^*_2+\cdots+(2-s)e^*_s+(-s)\left(e^*_{s+1}+\cdots e^*_r\right)\\&& + (r-2)e^*_{r+1}+(r-4)e^*_{r+2}+\cdots+(r-2s)e^*_{r+s};\\
  2(\rho_\nca-\rho_\ncb) &=&s(e^*_1+\cdots e^*_{r})-r(e^*_{r+1}+\cdots+e^*_{r+s}).
\end{eqnarray*} 
\end{example}

\begin{proposition}\label{SU s+1 s}
Let the notations be as in Example \eqref{U(r,s)}. Then $$\rho_\c=\rho_\nc$$ for $SU(r,s)$ if and only if $r=s+1$.

Therefore Corollary \ref{cup-products2} implies that on the compact and smooth quotient $X$ of the non-classical Mumford-Tate domain $D=S\mathcal{U}(s+1,s)_\R/T$, 
there exists a homomorphism of groups
\begin{equation*}
H^0(\Gamma\backslash\mathbb B,\omega_{\mathbb B}^{\otimes k/k_0})\times H^0(\Gamma\backslash \bar{\mathbb B}, \omega_{\bar{\mathbb B}}^{\otimes k/k_0}) \to H^{d}(X,L_{-\rho})^*,
\end{equation*}
for $k\ge N$.
\end{proposition}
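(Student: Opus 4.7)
The plan is to reduce Proposition \ref{SU s+1 s} to a direct coefficient comparison using the explicit computations of $2\rho_\c$ and $2\rho_\nc$ already carried out at the end of Example \ref{U(r,s)}, and then to invoke Corollary \ref{cup-products2} once we have verified its hypotheses for $S\mathcal{U}(s+1,s)_\R$.

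First I would expand both $2\rho_\c$ and $2\rho_\nc$ in the basis $\{e_1^*,\dots,e_{r+s}^*\}$. From the formulas given, the coefficient of $e_i^*$ in $2\rho_\c$ is $r+1-2i$ for $1\le i\le r$ and $s+1-2(i-r)$ for $r+1\le i\le r+s$; the coefficient of $e_i^*$ in $2\rho_\nc$ is $s+2-2i$ for $1\le i\le s$, equals $-s$ for $s+1\le i\le r$, and equals $r-2(i-r)$ for $r+1\le i\le r+s$. Setting the two coefficients of $e_1^*$ equal immediately forces $r-1=s$, i.e.\ $r=s+1$, giving the necessary direction. Conversely, substituting $r=s+1$ one checks coefficient-by-coefficient: for $1\le i\le s$, both coefficients equal $s+2-2i$; the single index $s+1\le i\le r$ is $i=r=s+1$, where $2\rho_\c$ has coefficient $1-r=-s$, matching $2\rho_\nc$; and for $r+1\le i\le r+s$, both coefficients equal $s+1-2(i-r)=r-2(i-r)$. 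Hence $\rho_\c=\rho_\nc$ iff $r=s+1$.

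For the second assertion, I would verify the hypotheses of Corollary \ref{cup-products2} for $G_\R=S\mathcal{U}(s+1,s)_\R$: it is simple (its Dynkin diagram is $A_{2s}$, which is connected), of Hermitian type (the symmetric space is the classical ball-type domain attached to a signature $(s+1,s)$ Hermitian form), and not isomorphic to $Sp(4,\R)$ since $Sp(4,\R)$ has Dynkin type $C_2$ while $SU(s+1,s)$ has type $A_{2s}$; in particular even in the low rank case $s=1$ one has $A_2\neq C_2$. Thus the factorization condition of Corollary \ref{cup-products2} holds trivially with a single factor. Combined with $\rho_\nc-\rho_\c=0$ from the first part, Corollary \ref{cup-products2} applies verbatim to produce the asserted homomorphism
\[
H^0(\Gamma\backslash\mathbb B,\omega_{\mathbb B}^{\otimes k/k_0})\times H^0(\Gamma\backslash \bar{\mathbb B},\omega_{\bar{\mathbb B}}^{\otimes k/k_0})\to H^d(X,L_{-\rho})^*
\]
for all $k\ge N$.

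There is no substantive obstacle here: both steps are essentially bookkeeping. The only point worth being careful about is the coefficient in the middle range $s+1\le i\le r$ in the $2\rho_\nc$ expression, which is constant $-s$ rather than following the arithmetic progression visible for $i\le s$; this is the reason the equality $\rho_\c=\rho_\nc$ is possible only when that middle range collapses to a single index, i.e.\ $r=s+1$. I would highlight this observation explicitly to make clear why $SU(s+1,s)$ is the \emph{only} member of the $SU(r,s)$ family to which Corollary \ref{cup-products2} applies with trivial twist.
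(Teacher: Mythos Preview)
Your proposal is correct and follows exactly the approach the paper intends: the proposition is stated in the paper without an explicit proof environment, relying on the computations of $2\rho_\c$ and $2\rho_\nc$ already displayed at the end of Example \ref{U(r,s)} together with a direct invocation of Corollary \ref{cup-products2}. You have simply written out the coefficient comparison and the verification of the corollary's hypotheses that the paper leaves to the reader.
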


\begin{example}[$Sp(2n,\C), n\ge 2$]\label{Sp(2n,C)}
Let $V$ be complex vector space of dimension $2n$, on which there exists an anti-symmetric bilinear form $Q$. Let $v_1,\cdots,v_{2n}$ be a basis of $V$ so that the matrix $Q(v_i,v_j)$ under the basis $v$ is
$$
J=
\begin{array}{c}
\begin{pmatrix}
   &  &  &  &  & 1 \\
   &  &  &  & \begin{sideways}$\ddots$\end{sideways} &  \\
   &  &  & 1 &  &  \\
   &  &  -1& &  &  \\
   &  \begin{sideways}$\ddots$\end{sideways}&  &  & &  \\
  -1 &  &  &  &  &  \\
\end{pmatrix} \\
\begin{array}{cc}
\underbrace{\hphantom{\begin{matrix} -1 & \cdots  & -1 \end{matrix}}}_{n} & 
\underbrace{\hphantom{\begin{matrix} 1 &\cdots  & 1 \end{matrix}}}_{n}
\end{array}
\end{array}
.$$

Define the complex conjugate $\sigma:\, V\to V$, $\sigma^{2}=-1$, by
$$\sigma v_{i}=\sqrt{-1}v_{2n+1-i},\, 1\le i\le n.$$
Then $\sigma v_{2n+1-i}=\sqrt{-1}v_{i}$. The complex conjugate defines a real subspace $V_{\R}$ of $V$ so that $V=V_{\R}\otimes_{\R}\C$.

Let 
\begin{eqnarray*}
V&=&V^{2n-1,0}\oplus V^{2n-2,1}\oplus V^{2n-3,2}\oplus V^{2n-4,3}\oplus \cdots\oplus V^{1,2n-2} \oplus V^{0,2n-1}\\
&=&\C\{v_{1}\}\oplus \C\{v_{2n-1}\}\oplus\C\{v_{3}\}\oplus \C\{v_{2n-3}\}\oplus\cdots\oplus\C\{v_{2}\}\oplus\C\{v_{2n}\}
\end{eqnarray*}
be a polarized Hodge structure of weight $2n-1$ on $V$ with $\dim_{\C}V^{p,q}=1$.

When $n\ge 2$, the period domain $D$ of all the polarized Hodge structures of weight $2n-1$ on $V$ with $\dim_{\C}V^{p,q}=1$ is a non-classical flag domain $D=Sp(2n,\R)/T$, where $T$ is the compact Cartan subgroup of $Sp(2n,\R)$.

The set of simple roots of $Sp(2n,\C)$ is given by
$$\epsilon_1=e^*_1-e^*_2,\cdots, \epsilon_{n-1}=e^*_{n-1}-e^*_n,\epsilon_{n}=2e^*_n.$$
where $\epsilon_1,\cdots,\epsilon_{n-1}$ are compact and $\epsilon_{n}$ is non-compact.

The sets of positive roots $\Delta_+=\Delta_+^\c\cup \Delta_+^\nc$ corresponding to the complex structure of the period domain $D$ are
\begin{eqnarray}
 \Delta_+^\c &=&\{(-1)^{i-1}(e^*_i-e^*_j):\, 1\le i<j\le n\} \label{Delta c for Sp};\\
 \Delta_+^\nc &=&\{(-1)^{i-1}(e^*_i+e^*_j):\, 1\le i \le j\le n\}.\nonumber
\end{eqnarray}
Since 
$$ {\Delta'}_+^\nc =\{e^*_i+e^*_j:\, 1\le i \le j\le n\}$$
gives a complex structure $\mathbb B$ on the symmetric space $G_\R/K$,
we have that 
\begin{eqnarray}
 \Delta_+^\nca&=&\{e^*_i+e^*_j:\, 1\le i \le j\le n, \, i\text{ odd} \} ,\label{Delta nc1 for Sp}\\
 \Delta_+^\ncb&=& \{-e^*_i-e^*_j:\, 1\le i \le j\le n, \, i\text{ even} \}.\label{Delta nc2 for Sp}
 \end{eqnarray}
 
We can compute directly that
$$\rho_\nc-\rho_\c=\left\{\begin{array}{l}e^*_1+e^*_3+\cdots+e^*_{2m-1},\, n=2m\\
e^*_1+e^*_3+\cdots+e^*_{2m+1},\, n=2m+1\end{array}\right. .$$
\end{example}

\begin{example}[$Sp(4,\C)$]
When $n=2$, we have 
\begin{eqnarray*}
 \Delta_+^\c &=&\{e^*_1-e^*_2\} ,\\
 \Delta_+^\nca&=&\{2e^*_1,e^*_1+e^*_2 \} ,\\
 \Delta_+^\ncb&=& \{-2e^*_2\},\\
 \rho_\nc-\rho_\c&=&e^*_1.
\end{eqnarray*}
We need to be careful here, since $(e^*_1+e^*_2, e^*_1-e^*_2)=0$, c.f. Remark \ref{beta alpga >0 remark} (2).

Let $\mu_{0}=a_{1}e^*_1+a_{2}e^*_2$ and $$\lambda_{0}=(1-a_{1})e^*_1-a_{2}e^*_2$$ so that $\mu_{0}+\lambda_{0}=\rho_\nc-\rho_\c$. Then \eqref{mu0 lambda_{0} alpha >=0} in Theorem \ref{cup-products} is equivalent to 
$$ a_{2}\le a_{1}\le a_{2}+1.$$

If $a_{1}=a_{2}=a$, then $\mu_{0}=a(e^*_1+e^*_2)$ and $$(\mu_{0}-(e^*_1+e^*_2),e^*_1-e^*_2)=0.$$ Hence \eqref{mu0 lambda_{0} beta <0} is not satisfied;
if $a_{1}=a_{2}+1=a+1$, then $\mu_{0}=e_{1}^{*}+a(e^*_1+e^*_2)$ and $$(\mu_{0}-(e^*_1+e^*_2),e^*_1-e^*_2)=1>0.$$ Hence \eqref{mu0 lambda_{0} beta <0} is neither satisfied. 

Therefore,  our method in this paper does not apply to the case of $Sp(4,\C)$.
\end{example}

\begin{example}[$Sp(6,\C)$]
When $n=3$, we have 
\begin{eqnarray*}
 \Delta_+^\c &=&\{e^*_1-e^*_2,e^*_1-e^*_3,-e^*_2+e^*_3\} ,\\
 \Delta_+^\nca&=&\{2e^*_1,e^*_1+e^*_2,e^*_1+e^*_3, 2e^*_3 \} ,\\
 \Delta_+^\ncb&=& \{-2e^*_2, -e^*_2-e^*_3\},\\
 \rho_\nc-\rho_\c&=&e^*_1+e^*_3.
\end{eqnarray*}

Let $\mu_{0}=0$ and $$\lambda_{0}=e^*_1+e^*_3\in \Delta_+^\nca$$ so that $\mu_{0}+\lambda_{0}=\rho_\nc-\rho_\c$.
Then \eqref{mu0 lambda_{0} alpha >=0} in Theorem \ref{cup-products} is automatically satisfied.
Note that \eqref{mu0 lambda_{0} beta <0} is also satisfied for $\mu_{0}=0$ due to Lemma \ref{beta alpga >0}.

For $\gamma=-2e^*_2$, let $\alpha_{\gamma}=e^*_1-e^*_2$ and then $$(\lambda_{0}-\gamma, \alpha_{\gamma})=(e^*_1+2e^*_2+e^*_3,e^*_1-e^*_2)=-1<0.$$
For $\gamma=-e^*_2-e^*_3$, let $\alpha_{\gamma}=e^*_1-e^*_3$ and then $$(\lambda_{0}-\gamma, \alpha_{\gamma})=(e^*_1+e^*_2+2e^*_3,e^*_1-e^*_3)=-1<0.$$
Therefore \eqref{mu0 lambda_{0} beta <0} is satisfied for $\lambda_0$.
\end{example}

From Theorem \ref{cup-products}, we have the following result.

\begin{proposition}
Let $X$ be the compact and smooth quotient of the period domain $D=Sp(6,\R)/T$. Then
there exists a homomorphism of groups
\begin{equation*}
H^0(\Gamma\backslash\mathbb B,\omega_{\mathbb B}^{\otimes k/k_0})\times H^0(\Gamma\backslash \bar{\mathbb B}, \omega_{\bar{\mathbb B}}^{\otimes k/k_0}\otimes L_{e^*_1+e^*_3}) \to H^{d}(X,L_{-\rho})^*,
\end{equation*}
for $k\ge N$.
\end{proposition}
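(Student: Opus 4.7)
The plan is to derive this proposition as a direct application of Theorem \ref{cup-products} to the non-classical flag domain $D = Sp(6,\R)/T$ with a carefully chosen pair of weights. Specializing Example \ref{Sp(2n,C)} to $n=3$ yields the positive root decomposition $\Delta_+^\c = \{e_1^*-e_2^*,\, e_1^*-e_3^*,\, -e_2^*+e_3^*\}$, $\Delta_+^\nca = \{2e_1^*,\, e_1^*+e_2^*,\, e_1^*+e_3^*,\, 2e_3^*\}$, $\Delta_+^\ncb = \{-2e_2^*,\, -e_2^*-e_3^*\}$, and the crucial equation $\rho_\nc - \rho_\c = e_1^* + e_3^*$. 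The requirement $\mu_0 + \lambda_0 = \rho_\nc - \rho_\c$ of Theorem \ref{cup-products} then suggests the asymmetric split $\mu_0 = 0$, $\lambda_0 = e_1^* + e_3^*$, with $\lambda_0$ itself lying in $\Delta_+^\nca$. This asymmetric choice is forced on us: the symmetric choice $\mu_0 = \lambda_0 = 0$ used for $SU(s+1,s)$ in Corollary \ref{cup-products2} is unavailable here because $\rho_\nc - \rho_\c \neq 0$ for $Sp(6,\R)$.

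Next, I would verify the conditions \eqref{mu0 lambda_{0} alpha >=0} and \eqref{mu0 lambda_{0} beta <0} of Theorem \ref{cup-products} for both $\mu_0$ and $\lambda_0$. Condition \eqref{mu0 lambda_{0} alpha >=0} holds trivially for $\mu_0 = 0$ and follows for $\lambda_0$ from the direct inner products $(e_1^*+e_3^*,\, e_1^*-e_2^*) = 1$, $(e_1^*+e_3^*,\, e_1^*-e_3^*) = 0$, $(e_1^*+e_3^*,\, -e_2^*+e_3^*) = 1$, all nonnegative. Condition \eqref{mu0 lambda_{0} beta <0} for $\mu_0 = 0$ is supplied by Lemma \ref{beta alpga >0}, which for each $\beta \in \Delta_+^\nca$ produces a compact positive root $\alpha_\beta$ with $(\beta,\alpha_\beta) > 0$, equivalently $(\mu_0 - \beta,\alpha_\beta) < 0$. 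Condition \eqref{mu0 lambda_{0} beta <0} for $\lambda_0$ reduces to pairing each $\gamma \in \Delta_+^\ncb$ with a root in $\Delta_+^\c$ yielding a strictly negative inner product: take $\gamma = -2e_2^*$ with $\alpha_\gamma = e_1^*-e_2^*$, giving $(e_1^*+2e_2^*+e_3^*,\, e_1^*-e_2^*) = -1$, and $\gamma = -e_2^*-e_3^*$ with $\alpha_\gamma = e_1^*-e_3^*$, giving $(e_1^*+e_2^*+2e_3^*,\, e_1^*-e_3^*) = -1$.

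With both hypotheses of Theorem \ref{cup-products} satisfied for the pair $(\mu_0,\lambda_0) = (0,\, e_1^*+e_3^*)$, the theorem furnishes a positive integer $N$ such that for all $k \ge N$ there is a group homomorphism
$$H^0(\Gamma\backslash\mathbb B,\omega_{\mathbb B}^{\otimes k/k_0}\otimes L_{\mu_0})\times H^0(\Gamma\backslash \bar{\mathbb B}, \omega_{\bar{\mathbb B}}^{\otimes k/k_0}\otimes L_{\lambda_0}) \to H^d(X,L_{-\rho})^*,$$
which, with $L_{\mu_0}$ the trivial line bundle and $L_{\lambda_0} = L_{e_1^*+e_3^*}$, is exactly the statement of the proposition. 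The substantive content of the argument lies upstream in Theorem \ref{cup-products} and ultimately in the vanishing theorem on $\mathscr{I}^o$ of Theorem \ref{vanshing on I prop}; here the only genuine step is the correct asymmetric selection of $\lambda_0 \in \Delta_+^\nca$, with the remaining inner-product verifications being routine. The main obstacle I would watch for is confirming that no other natural choice of $(\mu_0,\lambda_0)$ works in a symmetric fashion, as any attempt to place some of the mass of $e_1^*+e_3^*$ inside $\mu_0$ risks violating \eqref{mu0 lambda_{0} beta <0} for $\mu_0$ against the roots $e_1^*+e_3^*$ or $2e_1^*,\,2e_3^* \in \Delta_+^\nca$, which is why the twist must be placed entirely on the $\bar{\mathbb B}$ factor.
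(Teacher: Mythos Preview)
Your proposal is correct and follows essentially the same approach as the paper: the paper's proof consists precisely of the verifications carried out in the $Sp(6,\C)$ example (choosing $\mu_0=0$, $\lambda_0=e_1^*+e_3^*$, invoking Lemma \ref{beta alpga >0} for $\mu_0$, and checking the two $\gamma\in\Delta_+^\ncb$ with the same $\alpha_\gamma$ you selected) followed by an appeal to Theorem \ref{cup-products}. Your explicit check of \eqref{mu0 lambda_{0} alpha >=0} for $\lambda_0$ is slightly more detailed than the paper, which simply asserts it is ``automatically satisfied.''
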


We can show that when $n\ge 4$, we do not have similar cup-products with the method of this paper. More precisely we have the following proposition.

\begin{proposition}
For $Sp(2n,\C)$ with $n\ge 4$, there exists no $\mu_{0}+\lambda_{0}=\rho_\nc-\rho_\c$ with \eqref{mu0 lambda_{0} alpha >=0} and \eqref{mu0 lambda_{0} beta <0} satisfied.
\end{proposition}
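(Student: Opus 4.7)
The plan is to combine the dominance conditions \eqref{mu0 lambda_{0} alpha >=0} imposed on both $\mu_0$ and $\lambda_0$ to force both weights into a very rigid form, and then to pick a specific $\beta\in\Delta_+^\nca$ and a specific $\gamma\in\Delta_+^\ncb$ witnessing the failure of \eqref{mu0 lambda_{0} beta <0}. The main obstacle is this rigidity step; once it is in place, the $\beta$- and $\gamma$-tests proceed by direct inspection.

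First I would describe the positive compact root system. From \eqref{Delta c for Sp}, writing each $\alpha\in\Delta_+^\c$ as $e^*_p-e^*_q$, a direct case analysis of $(-1)^{i-1}(e^*_i-e^*_j)$ for $1\le i<j\le n$ shows that positivity is equivalent to: $p<q$ when $p,q$ are both odd, $p>q$ when $p,q$ are both even, and ``$p$ odd, $q$ even'' in the mixed case. In particular no positive compact root has the form $e^*_{\mathrm{even}}-e^*_{\mathrm{odd}}$. Writing $\mu_0=\sum a_i e^*_i$ and $\lambda_0=\sum b_ie^*_i$ with $b_i=(\rho_\nc-\rho_\c)_i-a_i$, together with $(\rho_\nc-\rho_\c)_i=1$ on odd indices and $0$ on even ones, imposing dominance on both $\mu_0$ and $\lambda_0$ collapses the odd-index and even-index coefficients to constants:
\begin{equation*}
a_1=a_3=a_5=\cdots=:a,\qquad a_2=a_4=a_6=\cdots=:c,\qquad 0\le a-c\le 1,\ a,c\in\mathbb Z.
\end{equation*}

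Next I would apply \eqref{mu0 lambda_{0} beta <0} to $\beta=e^*_{i_0}+e^*_{j_0}\in\Delta_+^\nca$, where $\{i_0,j_0\}$ are the two largest odd indices (such a pair exists precisely because $n\ge 4$). The vector $\mu_0-\beta$ has coefficient $a-1$ at $i_0,j_0$, coefficient $a$ at the other odd positions, and $c$ at all even positions. No positive compact root $e^*_p-e^*_q$ with $p\in\{i_0,j_0\}$ and $q$ odd can produce a negative pairing, since by maximality of $\{i_0,j_0\}$ no odd $q$ exceeds $p$ in index. The only remaining possibility is $p\in\{i_0,j_0\}$ and $q$ even, which yields $(a-1)-c<0$; combined with $a\ge c$ this forces $a=c$.

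Finally I would apply \eqref{mu0 lambda_{0} beta <0} to $\gamma=-(e^*_{i'_0}+e^*_{j'_0})\in\Delta_+^\ncb$, where $\{i'_0,j'_0\}$ are the two largest even indices (again possible because $n\ge 4$). With $a=c$, the vector $\lambda_0-\gamma$ takes the value $1-a$ at every odd position and at $i'_0,j'_0$, and $-a$ at every other even position. The only positive compact root that could pair negatively would be $e^*_p-e^*_q$ with $p$ even, $p\notin\{i'_0,j'_0\}$, $q\in\{i'_0,j'_0\}$ and $p>q$ in the even ordering; no such $p$ exists by the maximality of $\{i'_0,j'_0\}$. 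All other pairings return $0$ or $+1$, contradicting \eqref{mu0 lambda_{0} beta <0}. Hence no admissible pair $(\mu_0,\lambda_0)$ exists when $n\ge 4$.
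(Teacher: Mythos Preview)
Your proof is correct and follows essentially the same approach as the paper's: use the dominance conditions \eqref{mu0 lambda_{0} alpha >=0} on both $\mu_0$ and $\lambda_0$ to force the odd-index and even-index coefficients to be constants $a,c$ with $c\le a\le c+1$, then test \eqref{mu0 lambda_{0} beta <0} against the roots built from the two largest odd (resp.\ even) indices. The paper splits into four sub-cases ($n$ even/odd, and $a=c$ versus $a=c+1$) and disposes of each separately, whereas you chain the two tests (the $\beta$-test forces $a=c$, then the $\gamma$-test yields the contradiction), which gives a slightly cleaner uniform argument; the specific roots you pick coincide with the paper's choices.
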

\begin{proof}
\noindent {Case 1.} $n=2m$. Then $\rho_\nc-\rho_\c=e^*_1+e^*_3+\cdots+e^*_{2m-1}$.

Let $$\mu_{0}=\sum_{i=1}^{2m}a_{i}e_{i}^{*}.$$Then $\mu_{0}+\lambda_{0}=\rho_\nc-\rho_\c$ implies that 
$$\lambda_{0}=\sum_{i=1}^{m}\left[(1-a_{2i-1})e_{2i-1}^{*} +(-a_{2i})e_{2i}^{*}\right].$$

Now we consider the conditions \eqref{mu0 lambda_{0} alpha >=0} in Theorem \ref{cup-products}.
By taking $$\alpha=e_{2i-1}^{*}-e_{2i+1}^{*}\in \Delta_{+}^{\c},\, \, 1\le i\le m-1,$$ we have that
$$a_{1} =a_{3}=\cdots =a_{2m-1}=a.$$
Similarly by taking $$\alpha=e_{2i}^{*}-e_{2i+2}^{*}\in \Delta_{+}^{\c}, \, \, 1\le i\le m-1,$$ we have that 
$$a_{2} =a_{4}=\cdots =a_{2m}=b.$$
Finally we take $\alpha=e_{1}^{*}-e_{2}^{*}$ and have that 
$$b\le a \le b+1.$$
Hence we have two choices of $\mu_{0}$ and $\lambda_{0}$:
\begin{equation}\label{choice 1 of mu0 lambda0}
\begin{cases}
\mu_{0}=a(\sum_{i=1}^{2m}e_{i}^{*})\\ \lambda_{0}=\sum_{i=1}^{m}\left[(1-a)e_{2i-1}^{*} +(-a)e_{2i}^{*}\right]
\end{cases},
\end{equation}
and 
\begin{equation}\label{choice 2 of mu0 lambda0}
\begin{cases} \mu_{0}=\sum_{i=1}^{m}\left[(1+b)e_{2i-1}^{*} +be_{2i}^{*}\right]\\ \lambda_{0}=-b(\sum_{i=1}^{2m}e_{i}^{*})\end{cases} .
\end{equation}

In \eqref{choice 1 of mu0 lambda0}, $(\mu_{0},\Delta_{+}^{\c})=0$ and hence $\mu_{0}$ satisfies \eqref{mu0 lambda_{0} beta <0} in Theorem \ref{cup-products}. 

We claim that $\lambda_{0}$ in \eqref{choice 1 of mu0 lambda0} dost not satisfy \eqref{mu0 lambda_{0} beta <0}. In fact, let $$\gamma=-e_{2m-2}^{*}-e_{2m}^{*}\in \Delta_{+}^{\ncb}.$$ Then 
$$\lambda_{0}-\gamma=\sum_{i=1}^{m-2}\left[(1-a)e_{2i-1}^{*} +(-a)e_{2i}^{*}\right]+(1-a)\left(e_{2m-3}^{*}+e_{2m-2}^{*}+e_{2m-1}^{*}+e_{2m}^{*}\right),$$
and $(\lambda_{0}-\beta,\Delta_{+}^{\c})\ge 0$.

Similarly we can prove that $\mu_{0}$ in \eqref{choice 2 of mu0 lambda0} dost not satisfy \eqref{mu0 lambda_{0} beta <0}. In fact, we can take $$\beta=e_{2m-3}^{*}+e_{2m-1}^{*}\in \Delta_{+}^{\nca}.$$Then
$$\mu_{0}-\beta=\sum_{i=1}^{m-2}\left[(1+b)e_{2i-1}^{*} +be_{2i}^{*}\right]+b\left(e_{2m-3}^{*}+e_{2m-2}^{*}+e_{2m-1}^{*}+e_{2m}^{*}\right),$$
and $(\mu_{0}-\beta,\Delta_{+}^{\c})\ge 0$.

Therefore we have proved the proposition for $n=2m\ge 4$.

\noindent {Case 2.} $n=2m+1$. Then $$\rho_\nc-\rho_\c=e^*_1+e^*_3+\cdots+e^*_{2m+1}.$$

Let $$\mu_{0}=\sum_{i=1}^{2m+1}a_{i}e_{i}^{*},\, \lambda_{0}=\sum_{i=1}^{m}\left[(1-a_{2i-1})e_{2i-1}^{*} +(-a_{2i})e_{2i}^{*}\right]+(1-a_{2m+1})e_{2m+1}^{*}$$
such that $\mu_{0}+\lambda_{0}=\rho_\nc-\rho_\c$.

Similar to Case 1, we have that
\begin{eqnarray*}
&& a_{1} =a_{3}=\cdots =a_{2m+1}=a,\\
&& a_{2} =a_{4}=\cdots =a_{2m}=b,
\end{eqnarray*}
and that 
$$b\le a \le b+1.$$

Hence we have two choices of $\mu_{0}$ and $\lambda_{0}$:
\begin{equation}\label{choice 3 of mu0 lambda0}
\begin{cases}
\mu_{0}=a(\sum_{i=1}^{2m+1}e_{i}^{*})\\ \lambda_{0}=\sum_{i=1}^{m}\left[(1-a)e_{2i-1}^{*} +(-a)e_{2i}^{*}\right]+(1-a)e_{2m+1}^{*}
\end{cases},
\end{equation}
and 
\begin{equation}\label{choice 4 of mu0 lambda0}
\begin{cases} \mu_{0}=\sum_{i=1}^{m}\left[(1+b)e_{2i-1}^{*} +be_{2i}^{*}\right]+(1+b)e_{2m+1}^{*}\\ \lambda_{0}=-b(\sum_{i=1}^{2m+1}e_{i}^{*})\end{cases} .
\end{equation}

We take $$\gamma=-e_{2m-2}^{*}-e_{2m}^{*}\in \Delta_{+}^{\ncb}$$ and $$\beta=e_{2m-3}^{*}+e_{2m-1}^{*}\in \Delta_{+}^{\nca}.$$ 
Then
\eqref{choice 3 of mu0 lambda0} implies that
\begin{eqnarray*}
\lambda_{0}-\gamma&=&\sum_{i=1}^{m-2}\left[(1-a)e_{2i-1}^{*} +(-a)e_{2i}^{*}\right]+(1-a)\left(e_{2m-3}^{*}\right. \\
&&\left.+e_{2m-2}^{*}+e_{2m-1}^{*}+e_{2m}^{*}+e_{2m+1}^{*}\right),
\end{eqnarray*}
and \eqref{choice 4 of mu0 lambda0} implies that
\begin{eqnarray*}
\mu_{0}-\beta&=&\sum_{i=1}^{m-2}\left[(1+b)e_{2i-1}^{*} +be_{2i}^{*}\right]+b\left(e_{2m-3}^{*}\right.\\
&&\left.+e_{2m-2}^{*}+e_{2m-1}^{*}+e_{2m}^{*}+e_{2m+1}^{*}\right).
\end{eqnarray*}
Hence
$$(\lambda_{0}-\gamma, \Delta_{+}^{\c})\ge 0,\,(\mu_{0}-\beta,\Delta_{+}^{\c})\ge 0.$$
Therefore we have proved the proposition for $n=2m+1\ge 5$.
\end{proof}

\end{document}